\documentclass[11pt,twoside]{article}

\usepackage[dvipsnames]{xcolor}
\usepackage{amsfonts,amssymb,amsmath,amsthm,mathtools}
\allowdisplaybreaks[4]
\usepackage{booktabs,array}
\usepackage{enumitem}
\usepackage{geometry}
\usepackage{changepage}
\usepackage{fancyhdr}
\usepackage{microtype}
\usepackage[hidelinks]{hyperref}
\hypersetup{
  pdfauthor={Jiangsheng Hu, Yu-Zhe Liu, Tiwei Zhao},
  pdftitle={Support tau-tilting posets and Hochschild reconstruction for
  centralizer matrix algebras}
}

\setenumerate[1]{itemsep=0pt,partopsep=0pt,parsep=\parskip,topsep=3pt}
\setitemize[1]{itemsep=0pt,partopsep=0pt,parsep=\parskip,topsep=3pt}
\setdescription{itemsep=0pt,partopsep=0pt,parsep=\parskip,topsep=3pt}
\setlist[itemize]{leftmargin=35pt}
\setlist[enumerate]{leftmargin=35pt}

\numberwithin{equation}{section}
\numberwithin{figure}{section}
\numberwithin{table}{section}

\def\EnglishTitle{Support $\tau$-tilting posets and Hochschild reconstruction for
centralizer matrix algebras}
\def\headertitle{Support $\tau$-tilting posets and Hochschild reconstruction}
\def\PaperAuthorsENname{Jiangsheng Hu$^{1}$,
Yu-Zhe Liu$^{2}$ and
Tiwei Zhao$^{3,\ddagger}$}

\theoremstyle{plain}
\newtheorem{theorem}{Theorem}[section]
\newtheorem{lemma}[theorem]{Lemma}
\newtheorem{proposition}[theorem]{Proposition}
\newtheorem{corollary}[theorem]{Corollary}
\theoremstyle{definition}
\newtheorem{definition}[theorem]{Definition}
\newtheorem{remark}[theorem]{Remark}

\newcommand{\stt}{\mathrm{s}\tau\text{-}\mathrm{tilt}}
\newcommand{\Fac}{\operatorname{Fac}}
\newcommand{\tors}{\operatorname{tors}}
\newcommand{\rad}{\operatorname{rad}}
\newcommand{\add}{\operatorname{add}}
\newcommand{\Irr}{\operatorname{Irr}}
\newcommand{\Weak}{\operatorname{Weak}}
\DeclareMathOperator{\End}{End}
\DeclareMathOperator{\Hom}{Hom}
\DeclareMathOperator{\fdeg}{fdeg}
\DeclareMathOperator{\proj}{proj}
\newcommand{\Kb}{K^{\mathrm{b}}}
\newcommand{\cO}{\mathcal{O}}
\newcommand{\ms}[1]{\{\!\{#1\}\!\}}
\newcommand{\simTZ}{\stackrel{\mathrm{TZ}}{\sim}}
\newcommand{\simbTZ}{\stackrel{\mathrm{bTZ}}{\sim}}
\newcommand{\simM}{\stackrel{\mathrm{M}}{\sim}}
\newcommand{\simD}{\stackrel{\mathrm{D}}{\sim}}
\newcommand{\simAD}{\stackrel{\mathrm{AD}}{\sim}}
\newcommand{\simT}{\stackrel{\mathrm{T}}{\sim}}
\newcommand{\simgTZ}{\stackrel{\mathrm{gTZ}}{\sim}}
\DeclareMathOperator{\coker}{coker}
\DeclareMathOperator{\domdim}{domdim}
\DeclareMathOperator{\gldim}{gl.dim}
\newcommand{\Dsg}{D_{\mathrm{sg}}}

\title{\bfseries \EnglishTitle}
\author{\PaperAuthorsENname}
\date{}

\begin{document}

\maketitle
\thispagestyle{empty}

\begin{center}
\footnotesize
$^{1}$School of Mathematics, Hangzhou Normal University,
Hangzhou 311121, P. R. China\\[-0.5mm]
E-mail: \href{mailto:hujs@hznu.edu.cn}{hujs@hznu.edu.cn}

\vspace{0.2mm}
$^{2}$School of Mathematics and Statistics, Guizhou University,
Guiyang 550025, P. R. China\\[-0.5mm]
E-mail: \href{mailto:liuyz@gzu.edu.cn}{liuyz@gzu.edu.cn}

\vspace{0.2mm}
$^{3}$School of Artificial Intelligence, Jianghan University,
Wuhan 430056, P. R. China\\[-0.5mm]
E-mail: \href{mailto:tiweizhao@jhun.edu.cn}{tiweizhao@jhun.edu.cn}

\vspace{0.2mm}
$^{\ddagger}$Corresponding author
\end{center}

\vspace{1mm}

\begin{adjustwidth}{1cm}{1cm}
\noindent\footnotesize
\textbf{Abstract}: {Let $R$ be a field and $A$ the endomorphism algebra of a finite direct sum of cyclic modules over a finite-dimensional commutative local principal ideal $R$-algebra. We construct a central quotient showing that the support $\tau$-tilting poset of $A$ is isomorphic to the poset
of the symmetric group with the weak order. We show that the center of $A$ and degree-zero Hochschild homology, viewed as a module over the center, determine the truncated local algebra and the multiset of successive length gaps. For centralizer matrix algebras, the support $\tau$-tilting poset determines the multiset of distinct-exponent counts of the primary blocks. The corresponding algebra--module pair also recovers their local algebras and gap multisets. Combining this reconstruction with the known derived equivalence classification, we characterize derived equivalence by isomorphism of these algebra–module pairs. We apply the results to Morita reconstruction in the string and gentle classes.}

\vspace{1mm}
\noindent
\textbf{2020 Mathematics Subject Classification}:
Primary 16G10, 16E40, 15A27; Secondary 16E35, 15A20, 16D90.

\vspace{1mm}
\noindent
\textbf{Keywords}:
centralizer matrix algebra; support $\tau$-tilting theory; elementary divisor;
central reduction; degree-zero Hochschild homology; string algebra; gentle algebra.
\end{adjustwidth}

%\tableofcontents

\section{Introduction}\label{sec:intro}

Reconstruction from tilting-theoretic posets asks which algebraic data are
determined by an order. Happel--Unger reconstructed certain acyclic path
algebras from their tilting posets \cite{HappelUnger}. Aihara--Kase and
Kase obtained support $\tau$-tilting reconstruction results for other
classes, including tree-quiver and weak-order cases
\cite{AiharaKase,KaseWeak,KaseInverse}. Most recently, Kase studied which
algebraic structures are forced to agree for algebras sharing the same
support $\tau$-tilting poset \cite{KaseInverseII}. Support $\tau$-tilting modules are
related to functorially finite torsion classes and basic two-term silting
complexes by the correspondences of Adachi--Iyama--Reiten
\cite[Theorems~2.7 and~3.2]{AIR}; hence their posets provide a natural
reconstruction invariant. In this paper we study endomorphism algebras of direct
sums of cyclic modules over local principal ideal algebras and, through
primary decomposition, centralizer matrix algebras. In this setting the
reconstruction question has a  two-level answer: the support
$\tau$-tilting poset itself recovers only the multiset of distinct-exponent
counts of the primary blocks, whereas the center--Hochschild pair recovers the local algebras and the  gap multisets, up
to reordering of the blocks. 

Throughout this paper, $R$ denotes a field. By an algebra we mean a
finite-dimensional associative unital $R$-algebra, and modules are
finitely generated left modules. All algebra homomorphisms preserve
identities. Unless stated otherwise, isomorphisms and equivalences of
$R$-algebras are understood to be $R$-linear.

For an algebra $A$, let $\rad A$ denote its Jacobson radical, and put
\[
 \operatorname{LL}(A):=
 \min\{\ell\geq1\mid(\rad A)^\ell=0\}.
\]
{For $m\geq0$, we use $\rad^m A$  for
$(\rad A)^m$, with $\rad^0 A=A$.}
{For an integer $r$, put $(r)_+:=\max\{r,0\}$.}
For an object $X$ of an additive category, $\add X$ denotes the full
subcategory of direct summands of finite direct sums of copies of $X$.
We write $\ms{x_1,\ldots,x_r}$ for a multiset, with
repetitions retained.

{For a nonempty finite set
$P=\{p_1<\cdots<p_s\}\subseteq\mathbb Z_{>0}$, put
$g_1:=p_1$, $g_i:=p_i-p_{i-1}$ for $i\ge2$, and
$H(P):=\ms{g_1,\ldots,g_s}$; we call $H(P)$ the \emph{gap multiset} of
$P$.}
Let $\cO$ be a finite-dimensional commutative local principal ideal
$R$-algebra with $\rad\cO=(\pi)$ and $\operatorname{LL}(\cO)=\ell$. For
such a set $P\subseteq\{1,\ldots,\ell\}$, put
$\Lambda_{\cO}(P):=\End_{\cO}(\bigoplus_{i=1}^{s}\cO/(\pi^{p_i}))$ and
$\cO_P:=\cO/(\pi^{p_s})$. For an $R$-algebra $A$, write $\stt A$ for the
poset of isomorphism classes of basic support $\tau$-tilting $A$-modules,
ordered by $M\leq N$ if and only if $\Fac M\subseteq\Fac N$, where
$\Fac M$ is the full subcategory of factor modules of finite direct sums
of copies of $M$. {Write $\tors A$ for the lattice of torsion
classes in $A\text{-}\mathrm{mod}$.}   For an $R$-algebra $A$, let $Z(A)$ be its center and $[A,A]$ the $R$-subspace spanned by the
commutators.   {Put
$HH_0(A):=A/[A,A]$, endowed}
with its natural $Z(A)$-module structure. For $q\ge2$, let
$\Weak(\Sigma_q)$ denote the left weak order on the symmetric group
$\Sigma_q$.
{By an \emph{algebra--module pair} we mean a pair $(C,M)$
consisting of an $R$-algebra $C$ and a $C$-module $M$. An isomorphism
$(C,M)\simeq(C',M')$ means an $R$-algebra isomorphism
$\varphi\colon C\to C'$ together with an $R$-linear isomorphism
$\psi\colon M\to M'$ such that
$\psi(zm)=\varphi(z)\psi(m)$ for all $z\in C$ and $m\in M$.}

{The central-reduction theorem of Eisele--Janssens--Raedschelders
\cite[Theorem~11]{EJR} was stated for basic algebras over algebraically
closed fields. The arbitrary-field form used here is a special case of
Kimura's reduction theorem \cite{Kimura}; we give a direct proof in
Section~\ref{sec:prelim}. Iyama--Zhang obtained the weak-order formula
for the Auslander algebra corresponding to $P=\{1,\ldots,s\}$
\cite[Corollary~1.4]{IZ}. Over an algebraically closed field, Kase's
criterion also covers the split local cases for arbitrary $P$
\cite[Theorem~3.3]{KaseWeak}. Our local result combines an explicit
central quotient over an arbitrary field with a computation of
$HH_0$ as a module over the center.}

\begin{theorem}\label{thm:intro-local-reconstruction}
Under the above assumptions, there are isomorphisms
\begin{equation}\label{eq:intro-local-reconstruction}
 \begin{aligned}
 \stt\Lambda_{\cO}(P)&\simeq\Weak(\Sigma_{s+1})
     &&\text{as posets},\\
 Z(\Lambda_{\cO}(P))&\simeq\cO_P
     &&\text{as $R$-algebras},\\
 HH_0(\Lambda_{\cO}(P))&\simeq
     \bigoplus_{i=1}^{s}\cO_P/(\pi^{g_i})
     &&\text{as $\cO_P$-modules}.
 \end{aligned}
\end{equation}
Consequently, the abstract support $\tau$-tilting poset determines
$s=|P|$ and depends on $P$ only through $s$, whereas the algebra--module
pair $(Z(\Lambda_{\cO}(P)),HH_0(\Lambda_{\cO}(P)))$ determines the gap
multiset $\ms{g_1,\ldots,g_s}$.
\end{theorem}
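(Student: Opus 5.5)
Write $M_i=\cO/(\pi^{p_i})$ and $\Lambda=\Lambda_{\cO}(P)=\End_\cO(\bigoplus_i M_i)$. Elements of $\Lambda$ are matrices $(f_{ij})$ with $f_{ij}\in\Hom_\cO(M_j,M_i)$. Since $\cO$ is a commutative uniserial ring, each such Hom is cyclic, generated by multiplication by $\pi^{(p_i-p_j)_+}$, and is isomorphic to $\cO/(\pi^{\min(p_i,p_j)})$. All three isomorphisms in \eqref{eq:intro-local-reconstruction} will be computed from this explicit matrix description.

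\emph{Center.} The scalar map $\cO\to\Lambda$ factors through $\cO_P=\cO/(\pi^{p_s})$, and it is injective there because $M_s$ is faithful over $\cO_P$. To show that every central element is scalar, I commute a central $z$ with the idempotents $e_i$ to make it diagonal, $z=(z_i)$ with $z_i\in\cO/(\pi^{p_i})$. Commuting with the generators $\pi^{(p_i-p_j)_+}\colon M_j\to M_i$ in both directions then forces $z_i\equiv z_j \pmod{\pi^{\min(p_i,p_j)}}$. So the $z_i$ are compatible truncations of $z_s$, hence $Z(\Lambda)\cong\cO_P$.

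\emph{$HH_0$.} Off-diagonal entries are commutators: $f_{ij}=[f_{ij}e_j,e_j]$ up to sign. Hence $HH_0(\Lambda)$ is the quotient of $\bigoplus_i e_i\Lambda e_i=\bigoplus_i\cO/(\pi^{p_i})$ by the classes of $[a,b]$ with $a\in e_i\Lambda e_j$ and $b\in e_j\Lambda e_i$, that is, by the relations $ab\sim ba$. For $i<j$ the composites are $\pi^{p_j-p_i}u$ in the $e_j$ corner and $\pi^{p_j-p_i}u$ reduced to $u\bmod\pi^{p_i}$ in the $e_i$ corner. These relations identify $x\in\cO/(\pi^{p_i})$ (in corner $i$) with $\pi^{p_j-p_i}x$ (in corner $j$). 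By induction along $i=1,\dots,s$, corner $i$ contributes a new summand $\cO/(\pi^{p_i})$ modulo the image of $\pi^{g_i}\cdot(\text{corner }i-1)$. In the adapted basis $e_1,e_2-\pi^{g_2}e_1,\dots$ the quotient should become $\bigoplus_i\cO_P/(\pi^{g_i})$. Here I still have to check that the module structure over $\cO_P=Z(\Lambda)$ acts diagonally by scalars, and that the transitive composites of relations do not add anything new. This bookkeeping (the change of basis and the exactness of the quotient) is the main technical step. A cleaner route is to present $HH_0$ as the cokernel of an $s\times s$ matrix over $\cO_P$ whose entries are $\pi^{g}$-powers, and compute its Smith form. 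Alternatively, $HH_0$ is the cokernel of $\bigoplus_{i<j}\Hom\otimes\Hom\to\bigoplus_i\cO/(\pi^{p_i})$; using only consecutive pairs $(i,i+1)$ is enough, because longer relations are composites of consecutive ones.

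\emph{Poset.} I will use the central reduction recalled in the introduction (Kimura / EJR, proved in Section~\ref{sec:prelim}). For central $I\subseteq\rad Z(\Lambda)$, $\stt\Lambda\cong\stt(\Lambda/I\Lambda)$. Take $I=(\pi)\subseteq Z(\Lambda)=\cO_P$. Then $\Lambda/\pi\Lambda$ is described by the quiver with vertices $1,\dots,s$ and arrows $a_i\colon i\to i+1$ (the inclusions $\pi^{g_{i+1}}$) and $b_i\colon i+1\to i$ (the projections). The composites $b_ia_i$ and $a_ib_i$ become multiplication by powers of $\pi$ in the corners, so they vanish modulo $\pi$; the exception is the corner $e_1$ when $g_1$ gives $\pi^{0}$, which I must handle separately. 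Modulo $\pi$, then, $\Lambda$ becomes the preprojective algebra of type $A_s$ over $R$, and any residue-field extension is absorbed by the fact that the poset only depends on the quiver with relations, which I will verify is the case here. Mizuno's theorem then gives $\stt\Pi(A_s)\cong\Weak(\Sigma_{s+1})$. The main obstacle is to verify that $\Lambda/\pi\Lambda$ has exactly the preprojective relations, i.e. that $\pi\Lambda$ equals the ideal generated by $\pi$ times the identity together with all $2$-cycles. I would check this entrywise using the cyclic generators of the Hom spaces.

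\emph{Consequences.} $|\Weak(\Sigma_{s+1})|=(s+1)!$ determines $s$. The pair $(\cO_P,\bigoplus_i\cO_P/(\pi^{g_i}))$ determines the multiset of the $g_i$ because $\cO_P$ is a principal ideal ring, so by the structure theorem the invariant factors of a finitely generated $\cO_P$-module are unique. Each $g_i\le p_s$ is nonzero, so no summand is lost.
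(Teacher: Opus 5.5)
Your overall plan matches the paper's: central reduction modulo $\pi$, the center via the idempotents and projections, and $HH_0$ via diagonal corners modulo paired-corner commutators. Your center argument and your final deductions are fine. However, the identifications on which the poset part and the $HH_0$ part rest are both wrong as written.

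\emph{Poset.} You correctly note that each $2$-cycle $b_ia_i$, $a_ib_i$ is multiplication by $\pi^{g_{i+1}}$ with $g_{i+1}\ge1$, so it vanishes modulo $\pi$. (There is no exceptional corner at $e_1$.) But this makes $\Lambda/\pi\Lambda$ the doubled line modulo \emph{all} $2$-cycles. That is $B_s(k)$, of dimension $s^2$ (Theorem~\ref{thm:uniform-quotient}), not the preprojective algebra $\Pi(A_s)$. In $\Pi(A_s)$ the two $2$-cycles at an interior vertex are equal but nonzero, and $\dim\Pi(A_s)=s(s+1)(s+2)/6>s^2$ for $s\ge3$ (already $10$ versus $9$ for $s=3$). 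So Mizuno's theorem is being applied to the wrong algebra. There are two ways to close this step:
\begin{itemize}
\item Your route can be repaired by a second central reduction. The sum $\omega$ of the $2$-cycles is a central radical element of $\Pi(A_s)$, and $\Pi(A_s)/(\omega)\cong B_s(k)$.
\item The paper instead applies the same quotient theorem to the Auslander algebra of $k[t]/(t^s)$ and invokes Iyama--Zhang.
\end{itemize}
Also, the quotient is a $k$-algebra with semisimple part $k^s$, not $RQ_s/I_s$ over $R$. What you need is not that ``the poset depends only on the quiver.'' It is that $\stt$ of a finite-dimensional $k$-algebra is the same whether computed over $R$ or over $k$, as the paper notes.

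\emph{$HH_0$.} The relation you derive is incorrect. For $i<j$ put $d_{ij}=p_j-p_i$. Take $a\colon M(i)\to M(j)$, $1\mapsto x\pi^{d_{ij}}$, and $b\colon M(j)\to M(i)$ the projection. Then $ab$ and $ba$ are both multiplication by $x\pi^{d_{ij}}$, so the relation is $x\pi^{d_{ij}}e_j\sim x\pi^{d_{ij}}e_i$, not ``$x$ in corner $i$ $\sim$ $\pi^{d_{ij}}x$ in corner $j$.'' Your relation would give $\bar e_i=\pi^{d_{is}}\bar e_s$ for all $i$, so $HH_0$ would be cyclic and isomorphic to $\cO_P$, contradicting the formula you want. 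Correspondingly, $e_2-\pi^{g_2}e_1$ is not a correct adapted generator: it is not killed by $\pi^{g_2}$ when $g_2<p_1$.

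With the correct relation, $HH_0$ is the $\cO$-module with generators $\bar e_1,\dots,\bar e_s$ and relations $\pi^{p_i}\bar e_i=0$ and $\pi^{d_{ij}}(\bar e_j-\bar e_i)=0$. Put $h_1=\bar e_1$ and $h_i=\bar e_i-\bar e_{i-1}$. The relations $\pi^{p_1}\bar e_1=0$ and $\pi^{g_i}(\bar e_i-\bar e_{i-1})=0$ become $\pi^{g_i}h_i=0$. All other relations are consequences, because $\pi^{d_{ij}}(\bar e_j-\bar e_i)=\sum_{i<k\le j}\pi^{d_{ij}}h_k$ and $\pi^{p_i}\bar e_i=\sum_{k\le i}\pi^{p_i}h_k$, with $d_{ij},p_i\ge g_k$ for the relevant $k$. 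This gives $\bigoplus_i\cO_P/(\pi^{g_i})$ directly. It is the paper's argument, which carries out the same bookkeeping via the explicit inverse $e_i\mapsto\xi_1+\cdots+\xi_i$.
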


For $c\in M_n(R)$, let
$S_n(c,R):=\{a\in M_n(R)\mid ac=ca\}$ be the centralizer matrix algebra of
$c$. The identification $S_n(c,R)\simeq\End_{R[x]}(R^n)$, where $x$ acts
as $c$, allows us to apply the local theorem blockwise. Although every
finite-dimensional algebra is the common centralizer of two matrices
(see \cite[Lemma~2]{Brenner}), centralizers of a single matrix retain enough
structure for an explicit reconstruction theory.

Let $\Irr(c)$ be the set of monic irreducible factors of the minimal
polynomial of $c$. Regard $V_c:=R^n$ as an $R[x]$-module by letting $x$
act as $c$, and let $V_c(f)$ denote its $f$-primary component. Then
\[
 S_n(c,R)\simeq
 \prod_{f\in\Irr(c)}\End_{R[x]}(V_c(f)).
\]
For $f\in\Irr(c)$, let
$P_c(f):=\{a\ge1\mid f^a\text{ is an elementary divisor of }c\}$, with
repetitions omitted, and put $r_c(f):=\max P_c(f)$,
$\kappa_c(f):=|P_c(f)|$, and $U_c(f):=R[x]/(f^{r_c(f)})$. Define the
\emph{$T$-type} of $c$ by
$T_R(c):=\ms{\kappa_c(f)\mid f\in\Irr(c)}$. We call the factor
$\End_{R[x]}(V_c(f))$ the \emph{$f$-primary block} of $S_n(c,R)$. We have the following result.

\begin{theorem}
\label{thm:main-recover}
There is an isomorphism of posets
\[
 \stt S_n(c,R)\simeq
 \prod_{f\in\Irr(c)}\Weak(\Sigma_{\kappa_c(f)+1}).
\]
In particular, $S_n(c,R)$ is $\tau$-tilting finite and
$|\stt S_n(c,R)|=\prod_{f\in\Irr(c)}(\kappa_c(f)+1)!$.
If $c\in M_n(R)$ and $d\in M_m(R)$, then the following are equivalent:
\begin{enumerate}[label=\textup{(\arabic*)}]
\item $T_R(c)=T_R(d)$;
\item $\stt S_n(c,R)\simeq\stt S_m(d,R)$ as posets;
\item {$\tors S_n(c,R)\simeq\tors S_m(d,R)$ as lattices.}
\end{enumerate}
\end{theorem}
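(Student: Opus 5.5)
The plan is to reduce to the local theorem block by block, and then read the $T$-type off the abstract poset by decomposing it into indecomposable factors.

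\textbf{Step 1: the product formula.} Use the decomposition $S_n(c,R)\simeq\prod_{f\in\Irr(c)}\End_{R[x]}(V_c(f))$. For each $f$, the local ring $\cO_f:=R[x]/(f^{r_c(f)})$ is a finite-dimensional commutative local principal ideal $R$-algebra with $\pi=f$ and $\operatorname{LL}(\cO_f)=r_c(f)$. The module $V_c(f)$ is a direct sum of cyclic modules $\cO_f/(f^{a})$, with $a$ running over the elementary-divisor exponents, repetitions included. Passing to the basic algebra does not change $\stt$, and the basic algebra of $\End_{R[x]}(V_c(f))$ is $\Lambda_{\cO_f}(P_c(f))$. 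Theorem~\ref{thm:intro-local-reconstruction} then gives $\stt\End_{R[x]}(V_c(f))\simeq\Weak(\Sigma_{\kappa_c(f)+1})$. For a product algebra $A\times B$, support $\tau$-tilting modules are exactly pairs $(M,N)$ with $\Fac(M\oplus N)=\Fac M\times\Fac N$. Hence $\stt(A\times B)\simeq\stt A\times\stt B$, and similarly $\tors(A\times B)\simeq\tors A\times\tors B$. This yields the displayed isomorphism. Finiteness and the count $\prod(\kappa_c(f)+1)!$ follow from $|\Sigma_q|=q!$.

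\textbf{Step 2: the equivalences.} The implication (1)$\Rightarrow$(2) is immediate from Step~1. For (2)$\Leftrightarrow$(3), use $\tau$-tilting finiteness: every torsion class is functorially finite, so by the Adachi--Iyama--Reiten correspondence $\tors A\simeq\stt A$ as posets. A lattice isomorphism is the same thing as a poset isomorphism.

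For (2)$\Rightarrow$(1), we must recover the multiset $\ms{\kappa_c(f)}$ from the abstract poset $L=\prod_f\Weak(\Sigma_{\kappa_c(f)+1})$. Blocks with $\kappa_c(f)=0$ would contribute trivial factors, but $\kappa_c(f)\ge1$ for every $f\in\Irr(c)$. So the number of factors $\Weak(\Sigma_q)$ with $q\ge2$ equals $|\Irr(c)|$, and the claim follows once the factorization is shown to be intrinsic. Since the weak order is a finite lattice, the plan is as follows.
\begin{enumerate}
\item Show that each $\Weak(\Sigma_q)$ with $q\ge2$ is directly indecomposable as a poset. A concrete route: for a bounded lattice, a direct decomposition corresponds to a pair of complementary neutral (central) elements. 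The weak order is a semidistributive lattice whose Hasse diagram has the connected ``Coxeter'' structure of the type $A_{q-1}$ diagram. A decomposition of a finite graded lattice would split the atoms into two classes such that every join of atoms from different classes is a square (an interval of length $2$). For atoms $s_i,s_{i+1}$, the interval up to their join is a hexagon of length $3$, not a square. The Dynkin diagram of type $A_{q-1}$ is connected, so no split exists.
\item Invoke uniqueness of direct decompositions of finite connected posets into indecomposables (Hashimoto's theorem). This shows the multiset of factors $\ms{\Weak(\Sigma_{\kappa_c(f)+1})}$ is determined up to isomorphism.
\item Recover $q$ from $\Weak(\Sigma_q)$, for instance via its cardinality $q!$, which is injective in $q\ge2$.
\end{enumerate}

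The main obstacle is the indecomposability in item~1 together with correctly applying the unique factorization theorem. I would first attempt the atom-square argument just described, using the fact that in a product lattice, atoms coming from different factors always generate a length-$2$ Boolean interval.
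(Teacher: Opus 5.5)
Your proposal is correct. Step~1 and the argument for (2)$\Leftrightarrow$(3) are essentially the paper's; the two proofs differ in (2)$\Rightarrow$(1).

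\textbf{The paper's argument.} It invokes no unique-factorization theorem. To a finite lattice $L$ it attaches the \emph{atom-interaction graph}: the vertices are the atoms, and two atoms $a,b$ are adjacent when $|[\hat 0,a\vee b]|=6$. This graph is an order invariant. In $\prod_r\Weak(\Sigma_{s_r+1})$, atoms from distinct factors span a four-element interval, while adjacent simple transpositions in one factor span a hexagon. So the connected components are exactly the atom sets of the factors, and their sizes give the multiset $\ms{s_1,\ldots,s_q}$, which is the $T$-type.

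\textbf{Your argument.} You use the same square/hexagon dichotomy, but only inside a single factor, to show each $\Weak(\Sigma_q)$ is directly indecomposable. You then need an external result, Hashimoto's refinement theorem for connected posets (or equivalently the Boolean center of a bounded lattice), and finally the count $q!$ to identify the factors. This is valid, and it yields slightly more, namely the isomorphism types of the indecomposable factors themselves.

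\textbf{What the external theorem costs you.} It is unnecessary for the statement. If you apply your dichotomy to the atoms of the whole product rather than of one factor, the components of the ``non-square'' graph are the factor atom sets. Their sizes are the numbers $\kappa_c(f)$, which is precisely the paper's Lemma~\ref{lem:atom-graph}. The paper's route is therefore more elementary and self-contained. Both routes rely on the same rank-two interval computation for the weak order.
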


Since every centralizer matrix algebra is $\tau$-tilting finite, every torsion
class is functorially finite by \cite[Theorem~3.8]{DIJ}; hence conditions
\textup{(2)} and \textup{(3)} are equivalent. The proof reconstructs the
weak-order factors directly from the abstract order. Therefore, the poset
determines exactly the multiset $T_R(c)$.

The center supplies complementary block data. The classical
double-centralizer theorem gives $Z(S_n(c,R))=R[c]$ and
$R[c]\simeq\prod_{f\in\Irr(c)}U_c(f)$ as $R$-algebras
(see {\cite[Chapter~III, Theorem~3.17 and Corollary~1]{Jacobson}}); this
formula is also used in \cite{XiZhang1,XiZhang2}.
Applying the last two
isomorphisms in \eqref{eq:intro-local-reconstruction} to the primary
blocks yields the following formula.

\begin{theorem}
\label{thm:intro-hochschild}
There is an isomorphism of $Z(S_n(c,R))$-modules
\[
HH_0(S_n(c,R))\simeq
\bigoplus_{f\in\Irr(c)}\ \bigoplus_{g\in H(P_c(f))}
U_c(f)/\rad^gU_c(f).
\]
The inner direct sum is indexed with multiplicity by the gap multiset.
Here $Z(S_n(c,R))\simeq\prod_{f\in\Irr(c)}U_c(f)$ acts on each summand
through the projection to its $f$-factor. Consequently, the pair
$(Z(S_n(c,R)),HH_0(S_n(c,R)))$ recovers the local algebra and the gap
multiset of each primary block, up to isomorphism and reordering of the
blocks. Moreover,
$\dim_R HH_0(S_n(c,R))=
\sum_{f\in\Irr(c)}\deg(f)r_c(f)=\dim_R Z(S_n(c,R))$.
\end{theorem}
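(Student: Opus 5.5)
The strategy is to reduce to the local theorem, Theorem~\ref{thm:intro-local-reconstruction}, one primary block at a time, and then assemble the blocks using how $Z$ and $HH_0$ behave under finite products.

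\textbf{Step 1: identify each primary block with a local algebra $\Lambda_{\cO}(P)$.} For $f\in\Irr(c)$, set $\cO:=U_c(f)=R[x]/(f^{r_c(f)})$. This is a finite-dimensional commutative local principal ideal $R$-algebra with $\rad\cO=(\bar f)$ and $\operatorname{LL}(\cO)=r_c(f)$.

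The module $V_c(f)$ is annihilated by $f^{r_c(f)}$, so it is an $\cO$-module, and $\End_{R[x]}(V_c(f))=\End_{\cO}(V_c(f))$. By the elementary divisor theory of $c$,
\[
V_c(f)\simeq\bigoplus_{a\in P_c(f)}\big(\cO/(\bar f^{a})\big)^{m_a}
\quad\text{with all } m_a\ge1 .
\]
Passing to the basic algebra, $\End_{\cO}(V_c(f))$ is Morita equivalent to $\Lambda_{\cO}(P_c(f))$. Moreover $\cO_{P_c(f)}=\cO/(\bar f^{r_c(f)})=U_c(f)$.

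\textbf{Step 2: transport $Z$ and $HH_0$ across the Morita equivalence.} Both the center and $HH_0$ are Morita invariant. The $Z$-module structure on $HH_0$ is also preserved, because the trace map $\End_{\cO}(\bigoplus X_i^{m_i})\to HH_0(\End(\bigoplus X_i))$ is compatible with the central action; equivalently, the isomorphism $HH_0(eBe)\simeq HH_0(B)$ for a full idempotent $e$ intertwines $Z(B)\simeq Z(eBe)$, $z\mapsto ze$.

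Applying Theorem~\ref{thm:intro-local-reconstruction} then gives
\[
HH_0\big(\End(V_c(f))\big)\simeq\bigoplus_{g\in H(P_c(f))}U_c(f)/(\bar f^{g})
\]
as $U_c(f)$-modules. Here $(\bar f^{g})=\rad^gU_c(f)$.

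\textbf{Step 3: take products.} For $A=\prod_f A_f$ we have $[A,A]=\prod_f[A_f,A_f]$, hence $HH_0(A)=\bigoplus_f HH_0(A_f)$ and $Z(A)=\prod_f Z(A_f)$. The factor $Z(A_f)$ acts on the summand $HH_0(A_f)$, and the other factors act by zero. Combined with $Z(S_n(c,R))=R[c]\simeq\prod_fU_c(f)$, this proves the displayed isomorphism.

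\textbf{Step 4: recover the blocks from the pair.} Suppose $(Z,HH_0)$ is given up to isomorphism. Decompose $Z$ into its local factors using primitive central idempotents; any algebra isomorphism permutes these factors. Each summand $HH_0\cdot e$ is a module over the local factor $U=Ze$, which is a principal ideal local ring.

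Over such a ring, a finitely generated module is a direct sum of cyclic modules $U/\rad^gU$, and the multiset of exponents $g$ is uniquely determined. This is the Krull--Schmidt theorem, or equivalently the dimensions of $\rad^jM/\rad^{j+1}M$. Hence the local algebra $U_c(f)$ and the gap multiset $H(P_c(f))$ are recovered, up to reordering of the blocks.

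\textbf{Step 5: dimension count.} The gaps of $P_c(f)$ sum to $\max P_c(f)=r_c(f)$, and
\[
\dim_R U_c(f)/\rad^gU_c(f)=g\deg f .
\]
Summing over the gaps gives $\dim_R HH_0=\sum_f\deg(f)r_c(f)$, which equals $\dim_R\prod_fU_c(f)=\dim_R Z(S_n(c,R))$.

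\textbf{Main obstacle.} The delicate point is Step~2: making sure the Morita reduction from $\End(V_c(f))$, which has multiplicities, to the basic algebra $\Lambda_{\cO}(P)$ respects the $Z$-module structure on $HH_0$, and not just the underlying vector spaces. I would handle this with the full idempotent $e$ picking one copy of each cyclic summand. The inclusion $eBe\to B$ induces an isomorphism $HH_0(eBe)\to HH_0(B)$, the standard trace argument. Because $z$ is central, $ze=eze$, so this map is $Z$-linear once $Z(B)$ is identified with $Z(eBe)$ via $z\mapsto ze$.
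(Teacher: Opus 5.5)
Your proposal is correct and follows the paper's route. You reduce each primary block via the additive-generator Morita equivalence to $\Lambda_{\cO}(P_c(f))$ with $\cO=U_c(f)$, apply the local center/$HH_0$ computation (Theorem~\ref{thm:principal-hh-reconstruction}), and assemble over the primary product decomposition. You then recover the blocks from primitive central idempotents and Krull--Schmidt, and obtain the dimension count from $\sum_i g_i=r_c(f)$.

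Your full-idempotent argument ($z\mapsto ze$, with $HH_0(eBe)\to HH_0(B)$ an isomorphism) simply spells out the compatibility of the central action that the paper asserts in one line in the proof of Theorem~\ref{thm:hh-center-module}.
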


{The center-module decomposition refines the dimension identity
known over the field of complex numbers from the abelianization of Lie centralizers
\cite[Proposition~3.5]{Izosimov}, and holds over an arbitrary field.}

Together with derived invariance of the cap-product action
\cite[Theorem~2.2]{ArmentaKeller} and Li--Xi's classification theorem
\cite[Theorem~1.1]{LiXiD}, Theorem~\ref{thm:intro-hochschild} yields the
following characterization: for $d\in M_m(R)$, the centralizer matrix
algebras $S_n(c,R)$ and $S_m(d,R)$ are derived equivalent as $R$-algebras
if and only if the algebra--module pairs
$(Z(S_n(c,R)),HH_0(S_n(c,R)))$ and
$(Z(S_m(d,R)),HH_0(S_m(d,R)))$ are isomorphic; see
Corollary~\ref{cor:hochschild-derived-characterization}.

The seven reconstruction relations used below are defined in
Subsection~\ref{subsec:comparison}. Briefly, the relation T records the
support $\tau$-tilting type, TZ records that
type and the center separately, bTZ matches these two invariants block by
block, and gTZ additionally records the Grothendieck group of the
singularity category. The relations M, AD, and D correspond to Morita,
almost $\nu$-stable derived, and derived equivalence, respectively
{(here $\nu$ denotes the Nakayama functor)}
\cite[Theorem~1.1]{LiXiD}.
{
The data retained by the seven relations may be summarized as follows:
\begin{center}
\small
\renewcommand{\arraystretch}{1.08}
\begin{tabular}{@{}>{\bfseries}l p{0.76\textwidth}@{}}
\toprule
Relation & Data required for corresponding primary blocks \tabularnewline
\midrule
M   & the local algebra $U_c(f)$ and the exponent set $P_c(f)$; \\
AD  & $U_c(f)$ and the exponent set up to the Li--Xi involution described in Subsection~\ref{subsec:comparison}; \\
D   & $U_c(f)$ and the gap multiset $H(P_c(f))$, equivalently the {pairs consisting of the center and $HH_0$, matched block by block}; \\
gTZ & $U_c(f)$, $\kappa_c(f)$, and the Grothendieck group of the corresponding singularity category; \\
bTZ & $U_c(f)$ and $\kappa_c(f)$; \\
TZ  & the global center and the global support $\tau$-tilting poset, recorded separately; \\
T   & the multiset of the numbers $\kappa_c(f)$, equivalently the support $\tau$-tilting poset. \\
\bottomrule
\end{tabular}
\end{center}
}
Theorem~\ref{thm:strict-comparison} proves that
the implications
\[
\mathrm{D}\Rightarrow\mathrm{gTZ}\Rightarrow\mathrm{bTZ}
\Rightarrow\mathrm{TZ}\Rightarrow\mathrm{T}
\]
are strict over every field. The implications
$\mathrm{M}\Rightarrow\mathrm{AD}\Rightarrow\mathrm{D}$ are also
{strict. We recall the examples and their relation to Li--Xi's
results in Remark~\ref{rem:full-strict-chain}.}

We finally determine when the preceding data recover the Morita class.
{We use the string and gentle conventions fixed in
Section~\ref{sec:string}; the corresponding Morita-string and
Morita-gentle terminology is stated formally in
Definition~\ref{def:string-gentle-scope}.}

For a monic irreducible polynomial $f\in R[x]$ and a nonempty finite set
$P=\{p_1<\cdots<p_s\}$ of positive integers, {write
$K_f:=R[x]/(f)$ and define
$\Lambda_f(P):=\End_{R[x]}(\bigoplus_{p\in P}R[x]/(f^p))$.}
This algebra is the basic Morita representative associated with a primary
block having irreducible factor $f$ and distinct exponent set $P$.

{Part~\textup{(1)} of the following theorem is a specialization of
Marczinzik's classification of monomial algebras of dominant dimension
at least two \cite{MarczinzikMonomial}; part~\textup{(2)} follows by imposing
the quadratic-relation condition. We include direct proofs and use these
criteria for the Morita reconstruction in part~\textup{(3)}.}

\begin{theorem}\label{thm:collapse}
Let $f\in R[x]$ be monic and irreducible, and let $P$ be a nonempty finite
set of positive integers.
\begin{enumerate}[label=\textup{(\arabic*)}]
\item The basic Morita representative $\Lambda_f(P)$ is Morita equivalent to a string
$R$-algebra if and only if $f$ is linear and
$P=\{p\}$ or $P=\{p,p+1\}$ for some $p\ge1$.
\item The basic Morita representative $\Lambda_f(P)$ is Morita equivalent to a gentle
$R$-algebra if and only if $f$ is linear and
$P\in\{\{1\},\{2\},\{1,2\}\}$.
\item For centralizer matrix algebras within the Morita-string class,
\[
 \mathrm{M}\Longleftrightarrow\mathrm{AD}\Longleftrightarrow
 \mathrm{D}\Longleftrightarrow\mathrm{gTZ}\Longleftrightarrow
 \mathrm{bTZ},
\]
where TZ is strictly weaker than bTZ and T is strictly weaker than TZ.
Within the Morita-gentle class, TZ determines the Morita class.
\end{enumerate}
\end{theorem}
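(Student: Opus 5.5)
Parts~(1) and~(2) both start from an explicit quiver presentation of $\Lambda_f(P)$. Write $K_f=R[x]/(f)$ and $M_i=R[x]/(f^{p_i})$. The radical of $\Lambda_f(P)$ is spanned by non-isomorphisms between the $M_i$. The irreducible maps in $\add\bigoplus M_i$ are:
\begin{itemize}
\item the inclusion $M_i\to M_{i+1}$ (multiplication by $f^{p_{i+1}-p_i}$);
\item the projection $M_{i+1}\to M_i$;
\item the loop given by multiplication by $f$ on $M_1$, when $p_1>1$;
\item loops at $M_i$ by $f$, when the gaps on both sides exceed one.
\end{itemize}
Every top module is $K_f$. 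So $\Lambda_f(P)/\rad\Lambda_f(P)\simeq K_f^s$, and $\Lambda_f(P)$ is Morita equivalent to a bound quiver algebra over $R$ only if $K_f=R$, that is, only if $f$ is linear. Otherwise the simples have endomorphism ring $K_f\neq R$, which is impossible for string algebras over $R$ (these are split basic). For linear $f$ I would write the quiver and relations. It is the quiver of the Auslander-type algebra: a line $1\rightleftarrows2\rightleftarrows\cdots\rightleftarrows s$, with a loop at vertex $i$ precisely when the relevant gap exceeds one. The relations are commutativity relations $\alpha\beta=\beta\alpha$ (or $\beta\alpha=$ loop), plus nilpotency relations coming from $\pi^{p_i}=0$.

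For~(1), the string conditions say that each vertex has at most two arrows in and at most two out, and that relations are monomial, with at most one continuation of each arrow. A vertex with a loop and two neighbouring arrows violates the degree bound. If $s\ge3$, a middle vertex already has two arrows out and two in; the commutativity relation $ab=cd$ through it is a non-monomial relation, and it cannot be removed by any change of generators since both paths are nonzero. This forces $s\le2$. For $s=1$ we get $R[x]/(x^p)$, which is string. For $s=2$, gap $g_2\ge2$ forces extra loops, giving degree $\ge3$ at some vertex, or a non-monomial relation of the form loop $=ba$. Gap $1$ gives arrows $a\colon1\to2$ and $b\colon2\to1$, possibly with the loop at $1$ equal to $ba$ and therefore not an arrow, and with monomial relations only. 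So $P=\{p,p+1\}$ is exactly the string case. The ``if'' direction is the check that this quiver with its relations is string.

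For~(2), gentle additionally requires quadratic monomial relations and the gentle local conditions. In $R[x]/(x^p)$ the relation $x^p=0$ is quadratic only when $p\le2$. For $\{p,p+1\}$, the relations are $(ab)^{p}a$-type, of length growing with $p$, and they are quadratic only for $p=1$. The case $\{1,2\}$ gives the preprojective-like algebra with the single relation $ab=0$, which is gentle.

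For~(3), the definition of the Morita-string class forces every primary block to be of the form $\Lambda_{x-\lambda}(\{p\})$ or $\Lambda_{x-\lambda}(\{p,p+1\})$. For such blocks $U_c(f)\simeq R[x]/(x^{p})$ or $R[x]/(x^{p+1})$, and $\kappa\in\{1,2\}$. The pair $(U,\kappa)$ recovers $P$: from $r=\max P$ and $\kappa$ we get $P=\{r\}$ or $\{r-1,r\}$. Hence bTZ implies M, and the chain M$\Rightarrow$AD$\Rightarrow$D$\Rightarrow$gTZ$\Rightarrow$bTZ, by Theorem~\ref{thm:strict-comparison}, closes the cycle. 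For strictness I would use two examples inside the class:
\begin{itemize}
\item TZ but not bTZ: the blocks $\{1\},\{1,2\}$ versus $\{2\},\{1\}$ together with a swapped arrangement. That is, the global center $R[x]/(x)\times R[x]/(x^2)$ with $T=\ms{1,2}$ is realised both as ($U=R$, $\kappa=2$ impossible)… so instead I take $\{2\},\{1,2\}$ versus $\{1,2\}$ placed with $\{2\}$ on a swapped eigenvalue. The general centers $R[x]/(x^2)^2$ agree and $T=\ms{1,2}$ agrees, while block-by-block the blocks also agree. I would search among $\{1\},\{2\},\{2,3\},\{1,2\}$ for two multisets of blocks with equal centers and equal $T$ but different matchings, for instance $\{3\},\{1,2\}$ versus $\{2,3\},\{2\}$. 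Both have center $R[x]/(x^3)\times R[x]/(x^2)$ and $T=\ms{1,2}$, but the matchings $(3,1),(2,2)$ and $(3,2),(2,1)$ differ.
\item T but not TZ: $\{1\}$ versus $\{2\}$.
\end{itemize}
In the gentle class the blocks lie in $\{1\},\{2\},\{1,2\}$, with local algebras $R$, $R[x]/(x^2)$, $R[x]/(x^2)$. Given the center, the number of $R$ factors fixes the number of $\{1\}$ blocks, and $T$ then fixes how many of the $R[x]/(x^2)$ factors have $\kappa=2$. So TZ determines Morita.

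The main obstacle is the ``only if'' in~(1): showing that no basic algebra Morita equivalent to $\Lambda_f(P)$ admits a string presentation. Presentations are not unique, so I would argue via invariants, namely the quiver (from $\rad/\rad^2$) and the fact that a string algebra has at most two arrows at each vertex in each direction. I would compute $\dim e_i(\rad/\rad^2)e_j$ directly from the irreducible maps.
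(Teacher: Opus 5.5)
Your outline follows the paper's route for most of the statement. That includes the split obstruction from $\Lambda_f(P)/\rad\Lambda_f(P)\simeq K_f^{s}$, the Gabriel quiver of $\Lambda(P)$, and the explicit algebras $R_p$ and $N_p$. For (3) it includes recovering $P$ as $\{r\}$ or $\{r-1,r\}$ from $(U_c(f),\kappa_c(f))$, and the same bTZ/TZ example; your T/TZ example $\{1\}$ versus $\{2\}$ also works. However, the ``only if'' part of (1) has a genuine gap.

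The invariant you propose for the main obstacle, degree bounds on the Gabriel quiver, cannot exclude the bad cases. For $P=\{1,3\}$, $\{2,4\}$, $\{1,2,4\}$ or $\{1,2,3\}$, every vertex has at most two incoming and two outgoing arrows. In particular, for $s=2$ a gap $g_2\ge2$ never gives ``degree $\ge3$''. What it gives is a relation: the backtrack at $M(p_2)$ is a unit multiple of the $g_2$-th power of the loop, plus higher terms. It is not ``loop $=ba$''; the loop is an arrow, so it is not a product of two arrows. Your $s\ge3$ argument is also wrong as stated. For $P=\{1,2,4\}$, the backtrack at the middle vertex $M(2)$ through $M(4)$ is multiplication by $t^2=0$, so there is no relation $ab=cd$ there; the obstruction sits at $M(4)$ instead.

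More fundamentally, in all these cases the only obstruction is non-monomiality. It must hold for \emph{every} choice of primitive idempotents and arrow representatives, as Lemma~\ref{lem:obstruction} requires. ``Both paths are nonzero'' does not give this. For instance, $R\langle u,v\rangle/(u^2,v^2,vu)$ is monomial, yet the generators $x=u$, $y=u+v$ satisfy $xy=y^2=uv\neq0$. In $\Lambda(P)$ every arrow space is one-dimensional, so this kind of arrow mixing cannot occur. You must still control the $\mathfrak r^2$-corrections $\eta$ in an arbitrary representative $\widetilde\theta=c\theta+\eta$.

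The missing idea is the $t$-adic filtration degree of Lemma~\ref{lem:quiver-rigidity}. Such corrections strictly raise the filtration degree. So the backtrack $\rho$ across a gap $d$ keeps degree $d$, and the loop or degree-one backtrack $\omega$ keeps degree $1$. Hence $\rho=\sum_{j\ge d}c_j\omega^j$ with $c_d\neq0$. This is a linear dependence among distinct paths with nonzero images, valid for every choice of representatives. Changes of idempotents are absorbed by conjugation (Proposition~\ref{prop:monomial-obstruction}). The two cases ``some consecutive gap exceeds $1$'' and ``three consecutive exponents'' then force $P=\{p\}$ or $P=\{p,p+1\}$.

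A smaller version of the same issue affects (2). Excluding $N_p$ for $p\ge2$ needs an argument that does not depend on the presentation. For example, use the paper's count: any gentle presentation on $Q^{(2)}$ must kill $ab$ or $ba$, so it has dimension at most $5<4p+1$. Note also that the defining relation of $N_p$ is $(ba)^p$, of length $2p$.
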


{Section~\ref{sec:prelim} records the central-reduction result and
gives a direct proof. Section~\ref{sec:uniform} establishes the explicit
local quotient and the Hochschild formula of
Theorem~\ref{thm:intro-local-reconstruction}.
Section~\ref{sec:centralizer} applies them to centralizer matrix algebras
and compares the resulting invariants. Section~\ref{sec:string} gives
the string and gentle criteria with their known classification
background and proves the Morita-reconstruction consequences.}

\section{Central reduction for support
\texorpdfstring{$\tau$}{tau}-tilting posets}\label{sec:prelim}

{We recall the support $\tau$-tilting conventions and the
central-reduction theorem used in Section~\ref{sec:uniform}.}

\subsection{Support \texorpdfstring{$\tau$}{tau}-tilting posets}
\label{subsec:tau-prelim}

Let $A$ be a finite-dimensional algebra and $X$ an $A$-module. Denote by $|X|$ the number of isomorphism classes of
indecomposable direct summands of $X$, and by $|A|$ the number of
isomorphism classes of simple $A$-modules. We write
$A\text{-}\mathrm{mod}$ for the category of finitely generated left
$A$-modules, and $\proj A$ for its full subcategory of finitely generated
projective modules.
Following \cite{AIR}, a pair
$
 (M,P)\in (A\text{-}\mathrm{mod})\times\proj A
$
is called \emph{$\tau$-rigid} if
$
 \Hom_A(M,\tau M)=0
 ~\text{and}~
 \Hom_A(P,M)=0.
$
{Here $\tau$ denotes the Auslander--Reiten translation.}
It is called a \emph{support $\tau$-tilting pair} if it is $\tau$-rigid and
$
 |M|+|P|=|A|.
$
In this case, $M$ is called a \emph{support $\tau$-tilting $A$-module}.
{We retain the order on $\stt A$ and the notation $\Fac M$
fixed in the Introduction. A torsion class in $A\text{-}\mathrm{mod}$ is
a full subcategory closed under extensions and factor modules; accordingly,
$\tors A$ is ordered by inclusion.}

{Denote by $\Kb(\proj A)$ the bounded homotopy category of
$\proj A$, and by $K_0(\proj A)$ the split Grothendieck group of
$\proj A$, canonically identified with $K_0(\Kb(\proj A))$.}
A \emph{two-term complex} is a complex
\[
 C:=(P^{-1}\longrightarrow P^0)\in\Kb(\proj A)
\]
concentrated in degrees $-1$ and $0$. It is \emph{presilting} if
$
 \Hom_{\Kb(\proj A)}(C,C[i])=0
 ~\text{for every }i>0,
$
and it is \emph{silting} if, in addition,
$
 \operatorname{thick}(C)=\Kb(\proj A),
$
where $\operatorname{thick}(C)$ is the smallest thick subcategory
containing $C$. Its \emph{$g$-vector} is
\[
 {\mathbf g(C):=[P^0]-[P^{-1}]\in K_0(\proj A).}
\]

For basic two-term silting complexes $C$ and $D$, we write
$C\leq D$ if
$\Hom_{\Kb(\proj A)}(D,C[i])=0$ for all $i>0$.
If the ground field is algebraically closed,
Adachi--Iyama--Reiten established bijections between basic support
$\tau$-tilting modules, functorially finite torsion classes, and basic
two-term silting complexes. These bijections are given by
$M\mapsto\Fac M$ and $C\mapsto H^0(C)$, respectively
(see \cite[Theorems~2.7 and~3.2]{AIR}), and the latter is a poset isomorphism by
\cite[Corollary~3.9]{AIR}. {These correspondences hold over an
arbitrary field by \cite{DIJ}. }

\subsection{Central reduction}

{The following form of the central-reduction theorem of
Eisele--Janssens--Raedschelders \cite[Theorem~11]{EJR} is a special
case of Kimura's reduction theorem \cite{Kimura}. Indeed, if
$N=Z(A)\cap\rad A$, then $C=R\cdot1+N\subseteq Z(A)$ is a
commutative local Artin ring with radical $N$, hence a complete local
Noetherian ring. The algebra $A$ is finite over $C$, and the ideals
considered below lie in $NA$. We give a direct proof using the
Hom-space comparison of \cite{EJR}.}

\begin{proposition}
\label{prop:reduction}
Let \(A\) be a finite-dimensional algebra over an arbitrary field, and
put $J_A:=\bigl(Z(A)\cap\rad A\bigr)A$. If \(I\) is an ideal of
\(A\) such that \(I\subseteq J_A\), then reduction modulo \(I\)
induces an isomorphism of posets $\stt A\simeq\stt(A/I)$.
\end{proposition}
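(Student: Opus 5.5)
We work with two-term silting complexes, which are in order-preserving bijection with $\stt$ by the Adachi--Iyama--Reiten correspondences (valid over an arbitrary field by \cite{DIJ}). Put $\bar A:=A/I$ and let $F:=\bar A\otimes_A-\colon\Kb(\proj A)\to\Kb(\proj \bar A)$. Since $I\subseteq J_A\subseteq\rad A$, idempotents lift modulo $I$. Hence $F$ induces a bijection on isoclasses of indecomposable projectives, a bijection on isoclasses of two-term complexes up to homotopy, and an isomorphism $K_0(\proj A)\simeq K_0(\proj\bar A)$ compatible with $g$-vectors. It therefore suffices to show that a two-term complex $C$ is presilting over $A$ if and only if $FC$ is presilting over $\bar A$, and that for two-term $C,D$ we have $\Hom(D,C[1])=0$ if and only if $\Hom(FD,FC[1])=0$. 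The silting condition is then automatic, since a two-term presilting complex is silting if and only if it has $|A|=|\bar A|$ nonisomorphic indecomposable summands, and $F$ preserves indecomposability and summand counts (this uses the two-term Krull--Schmidt bijection). Applying the same equivalence to $C\oplus D$ shows that $F$ preserves and reflects the order $\leq$.

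The key comparison is the following. For two-term complexes $D=(Q^{-1}\xrightarrow{e}Q^0)$ and $C=(P^{-1}\xrightarrow{d}P^0)$, a map $D\to C[1]$ is an element $h\in\Hom_A(Q^{-1},P^0)$, and it is null-homotopic if and only if $h=d\alpha+\beta e$ with $\alpha\in\Hom(Q^{-1},P^{-1})$ and $\beta\in\Hom(Q^0,P^0)$. Thus $\Hom(D,C[1])=0$ means that the map
\[
\Phi\colon\Hom_A(Q^{-1},P^{-1})\oplus\Hom_A(Q^0,P^0)\to\Hom_A(Q^{-1},P^0),\qquad(\alpha,\beta)\mapsto d\alpha+\beta e,
\]
is surjective. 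Over $\bar A$ the corresponding map $\bar\Phi$ is $\Phi\otimes_A\bar A$, because for projectives $\Hom_{\bar A}(\bar Q,\bar P)=\Hom_A(Q,P)/\Hom_A(Q,IP)$.

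The main step is a Nakayama argument over the center. Set $N:=Z(A)\cap\rad A$, a nilpotent ideal of the commutative ring $Z(A)$. All the Hom spaces above are $Z(A)$-modules, and $\Phi$ is $Z(A)$-linear because $d$ and $e$ are $A$-linear and central elements act compatibly on both sides of the composition. If $\Phi$ is surjective, then clearly $\bar\Phi$ is surjective. Conversely, suppose $\bar\Phi$ is surjective, so that $\Hom(Q^{-1},P^0)=\operatorname{im}\Phi+\Hom(Q^{-1},IP^0)$. Since $I\subseteq NA$, we have $IP^0\subseteq NP^0$, and hence $\Hom(Q^{-1},IP^0)\subseteq\Hom(Q^{-1},NP^0)=N\cdot\Hom(Q^{-1},P^0)$. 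The last equality holds because $Q^{-1}$ is projective: a map into $NP^0=\sum_i z_iP^0$ lifts through $\bigoplus_i P^0\xrightarrow{(z_i)}P^0$. Because $N$ is nilpotent, Nakayama's lemma for the $Z(A)$-module $\coker\Phi$ gives $\coker\Phi=0$.

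I expect the main obstacle to be this Nakayama step, specifically the identification $\Hom(Q,NP)=N\Hom(Q,P)$ together with the $Z(A)$-linearity of $\Phi$. The centrality of $N$ is exactly what makes the argument work; for a general ideal $I\subseteq\rad A$ it fails. The remaining verifications are routine: the bijection on two-term complexes via lifting idempotents, and the compatibility of $F$ with $H^0$ and with $\Fac$.
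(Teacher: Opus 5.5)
There is a genuine gap, and it is in the step you call routine. It is false that $F=\overline A\otimes_A-$ induces a bijection on isomorphism classes of two-term complexes. It is also false that $F$ preserves indecomposability.

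Take $A=R[t]/(t^2)$ and $I=(t)=J_A$. The complexes $A\xrightarrow{t}A$ and $A\xrightarrow{0}A$ both reduce to $R\xrightarrow{0}R\simeq R[1]\oplus R$. They are not isomorphic in $\Kb(\proj A)$, since their $H^0$ are $R$ and $A$. Moreover, $A\xrightarrow{t}A$ is indecomposable: its endomorphism ring in $\Kb(\proj A)$ is $R[\varepsilon]/(\varepsilon^2)$.

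Lifting idempotents and morphisms between projectives only gives surjectivity on objects. Your claims that $F$ preserves and reflects the silting property via summand counts, and hence induces a bijection on $\stt$, rest on this false ``two-term Krull--Schmidt bijection''. So the argument is incomplete as written.

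The step you flagged as the main obstacle is in fact fine. The Nakayama argument over $Z(A)$, using $\Hom_A(Q,NP)=N\Hom_A(Q,P)$ for projective $Q$, is correct. It handles every $I\subseteq J_A$ at once. That is cleaner than the paper's route, which treats square-zero principal central ideals via the EJR comparison, iterates, and then factors $A\to A/I\to A/J_A$.

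The statements you need hold only for presilting complexes, and they require your comparison as input. The paper proves injectivity as follows.
\begin{enumerate}
\item Let $C,D$ be presilting with $\overline C\simeq\overline D$. Then $\overline C\oplus\overline D$ is presilting, so $C\oplus D$ is presilting by the comparison.
\item Bongartz completion gives a basic silting $T$ with $C,D\in\add T$. The $g$-vectors of the indecomposable summands of $T$ form a basis of $K_0(\proj A)$.
\item Reduction identifies $K_0(\proj A)$ with $K_0(\proj\overline A)$ and preserves $g$-vectors. Hence $\mathbf g(C)=\mathbf g(D)$, which forces $C\simeq D$.
\item Lifting the summands of a decomposition of $\overline C$ then shows that decompositions are preserved and reflected. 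So summand counts, and hence the silting property, match.
\end{enumerate}

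Alternatively, you can stay closer to your outline.
\begin{enumerate}
\item $FC$ is silting whenever $C$ is, because $\overline A=F(A)\in\operatorname{thick}(FC)$.
\item Injectivity on silting objects follows from order reflection together with antisymmetry of the silting order.
\item For surjectivity, lift a silting $\overline T$ to a two-term complex $T$. Then $T$ is presilting by your comparison, so Bongartz-complete it to $T\oplus U$. Since $\overline T$ is maximal presilting, $FU\in\add\overline T$, and therefore $\add F(T\oplus U)=\add\overline T$.
\end{enumerate}
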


\begin{proof}
We first treat the case $I=(z)$, where
$z\in Z(A)\cap\rad A$ and $z^2=0$, and
{put $\overline A:=A/(z)$. For every $A$-module or complex
$X$, write $\overline X:=\overline A\otimes_A X$.}
For projective $A$-modules $X$ and $Y$, the canonical map
\[
 \Hom_A(X,Y)\longrightarrow
 {\Hom_{\overline A}(\overline X,\overline Y)}
\]
is surjective with kernel $z\Hom_A(X,Y)$ by projectivity of $X$ and
centrality of $z$. For two-term complexes $C$ and $D$, the calculation in
\cite[Equation~(4.2)]{EJR} gives
\begin{equation}\label{eq:central-hom-comparison}
 \Hom_{\Kb(\proj A)}(C,D[1])=0
 \quad\Longleftrightarrow\quad
 {\Hom_{\Kb(\proj\overline A)}
 (\overline C,\overline D[1])=0.}
\end{equation}
Since idempotents and morphisms between projective modules lift modulo
the square-zero ideal $(z)$, every two-term presilting complex over
{$\overline A$} lifts to one over $A$ by
\eqref{eq:central-hom-comparison}.

{For injectivity, let $C$ and $D$ be two-term presilting complexes
with $\overline C\simeq\overline D$. Then
$\overline C\oplus\overline D$ is presilting, so
\eqref{eq:central-hom-comparison} implies that $C\oplus D$ is
presilting. By Bongartz completion, there is a basic two-term silting
complex $T$ such that $C\oplus D\in\add T$. The classes of the
indecomposable summands of $T$ form a basis of $K_0(\proj A)$; both
facts hold over arbitrary fields by \cite{DIJ}. Reduction identifies
$K_0(\proj A)$ with $K_0(\proj\overline A)$ and preserves
$g$-vectors, so $\mathbf g(C)=\mathbf g(D)$. The multiplicities of
all indecomposable summands of $T$ in $C$ and $D$ therefore agree,
which gives $C\simeq D$. Hence reduction induces a bijection on
isomorphism classes of two-term presilting complexes.}
It also preserves and reflects direct-sum decompositions: if
{$\overline C\simeq\overline U\oplus\overline V$}, then lifts
$U$ and $V$ have a presilting direct sum by
\eqref{eq:central-hom-comparison}, and injectivity yields
$C\simeq U\oplus V$.
{By Bongartz completion and the basis theorem \cite{DIJ}, a basic
two-term presilting complex is silting if and only if it has $|A|$
indecomposable summands. Since $|A|=|\overline A|$, the bijection
restricts to basic two-term silting complexes.} Moreover,
\eqref{eq:central-hom-comparison} shows that it preserves and reflects
the silting order.

Now let $z\in Z(A)\cap\rad A$ be arbitrary. Choose an integer $n>2$ such that
$z^n=0$. For every $2\leq r\leq n$, the kernel of
$A/(z^r)\to A/(z^{r-1})$ is generated by the square-zero central
radical element $z^{r-1}+(z^r)$. Thus the square-zero case applies
successively and yields the assertion for reduction modulo $(z)$.

Choose $z_1,\ldots,z_m\in Z(A)\cap\rad A$ such that
$J_A=(z_1,\ldots,z_m)$. Successive principal reductions identify the
two-term silting posets of $A$ and $A/J_A$. Since
$I\subseteq J_A\subseteq\rad A$, the images of the $z_i$ in $A/I$ are
central radical elements generating $J_A/I$. The same argument identifies
the two-term silting posets of $A/I$ and $A/J_A$.
The factorization $A\to A/I\to A/J_A$ therefore shows that reduction
$A\to A/I$ induces a poset isomorphism. {The correspondence
between basic two-term silting complexes and support $\tau$-tilting
modules \cite{DIJ} gives $\stt A\simeq\stt(A/I)$.}
\end{proof}

\section{Reconstruction over local principal ideal algebras}
\label{sec:uniform}

We apply Proposition~\ref{prop:reduction} to endomorphism algebras of
sums of cyclic modules over local principal ideal algebras.
Corollary~\ref{cor:weak-order-principle} proves the support $\tau$-tilting
assertions of Theorem~\ref{thm:intro-local-reconstruction}, and
Theorem~\ref{thm:principal-hh-reconstruction} gives the center and
degree-zero Hochschild homology formulas and recovers the gap multiset.

Let $\cO$ be a finite-dimensional commutative local principal ideal
$R$-algebra.  Write its maximal ideal as $(\pi)$, let
$\operatorname{LL}(\cO)=\ell$, and put
$k:=\cO/(\pi)$.  Thus
\[
 \cO=(\pi^0)\supset(\pi)\supset\cdots\supset(\pi^\ell)=0
 \ \ \text{and} \ \
 \pi^{\ell-1}\ne0.
\]
Let
$P=\{p_1<\cdots<p_s\}\subseteq\{1,\ldots,\ell\}$ be nonempty, and set
\[
 M(i):=\cO/(\pi^{p_i}),\qquad
 M_P:=\bigoplus_{i=1}^{s}M(i).
\]
{Thus $\Lambda_{\cO}(P)=\End_{\cO}(M_P)$.}
Let $z_\pi\in\Lambda_{\cO}(P)$ be the endomorphism whose restriction
to every summand of $M_P$ is multiplication by $\pi$. This endomorphism
is central and satisfies $z_\pi^\ell=0$, so for every
$a\in\Lambda_{\cO}(P)$,
$
 (1-az_\pi)^{-1}=\sum_{r=0}^{\ell-1}(az_\pi)^r.
$
It follows that
$z_\pi\in Z(\Lambda_{\cO}(P))\cap\rad\Lambda_{\cO}(P)$.
We write $(z_\pi)$ for the two-sided ideal of
$\Lambda_{\cO}(P)$ generated by $z_\pi$.

{We compose paths and maps from right to left.}
Let $Q_s$ be the quiver with vertex set $\{1,\ldots,s\}$ and arrows
$\alpha_i:i\to i+1$ and $\beta_i:i+1\to i$ for $1\leq i<s$. Thus
$Q_s$ is the doubled line
\[
 {
 \underset{1}{\bullet}
 \underset{\beta_1}{\overset{\alpha_1}{\rightleftarrows}}
 \underset{2}{\bullet}\rightleftarrows\cdots\rightleftarrows
 \underset{s-1}{\bullet}
 \underset{\beta_{s-1}}{\overset{\alpha_{s-1}}{\rightleftarrows}}
 \underset{s}{\bullet},}
\]
and {set
$I_s:=\langle\beta_i\alpha_i,\alpha_i\beta_i\mid1\leq i<s\rangle$
and $B_s(k):=kQ_s/I_s$.}
{Thus $B_1(k)=k$ when $s=1$.}

When $\cO=k[t]/(t^s)$ and $P=\{1,\ldots,s\}$,
$\Lambda_{\cO}(P)$ is the Auslander algebra of $k[t]/(t^s)$, whose
support $\tau$-tilting poset was determined by Iyama--Zhang
(see \cite[Corollary~1.4]{IZ}).  Up to the convention for path composition,
$B_s(k)$ is the algebra obtained by taking $m=1$ in
\cite[Example~3.2(3)]{KaseWeak}. Hence
\cite[Theorem~3.3]{KaseWeak} yields
$\stt B_s(k)\simeq\Weak(\Sigma_{s+1})$ when $k$ is algebraically
closed. {More generally, when $k$ is algebraically closed, the
algebras $\Lambda_{k[t]/(t^\ell)}(P)$ satisfy Condition~3.1 of
\cite{KaseWeak}, so the weak-order formula for every such $P$ also
follows from its Theorem~3.3. The following theorem gives an explicit
central quotient for arbitrary $\cO$ and $P$.}

\begin{theorem}
\label{thm:uniform-quotient}
The quotient $\Lambda_{\cO}(P)/(z_\pi)$ carries a canonical
$k$-algebra structure for which $k$ acts centrally, and
\[
 \Lambda_{\cO}(P)/(z_\pi)\simeq B_s(k)
\]
as $k$-algebras.  In particular, the quotient depends on $\cO$ and
$P$ only through the residue field $k$ and the number
$s=|P|$ of distinct exponents.
\end{theorem}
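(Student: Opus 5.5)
We will compute $\Lambda:=\Lambda_{\cO}(P)$ explicitly as a matrix algebra and read off the quotient. Write $\Lambda=\bigoplus_{i,j}\Hom_{\cO}(M(j),M(i))$, with $e_i$ the idempotent projecting onto $M(i)$. Since $M(j)$ is cyclic, a map $M(j)\to M(i)$ is determined by the image of $1$, which must be an element of $M(i)$ annihilated by $\pi^{p_j}$. Hence
\[
 \Hom_{\cO}(M(j),M(i))\simeq \pi^{(p_i-p_j)_+}\cO/(\pi^{p_i}),
\]
a cyclic $\cO$-module generated by $\phi_{ij}$, the map $1\mapsto\pi^{(p_i-p_j)_+}$. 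Composition is multiplication of representatives: $\phi_{ij}\phi_{jk}=\pi^{c(i,j,k)}\phi_{ik}$, where
\[
 c(i,j,k)=(p_i-p_j)_++(p_j-p_k)_+-(p_i-p_k)_+\ge0.
\]
The exponent $c(i,j,k)$ vanishes exactly when $p_j$ lies weakly between $p_i$ and $p_k$. Moreover $z_\pi=\sum_i\pi e_i$, so $(z_\pi)=\pi\Lambda$, because $z_\pi$ is central. Thus
\[
 \overline\Lambda:=\Lambda/(z_\pi)=\bigoplus_{i,j}\Hom(M(j),M(i))/\pi\Hom(M(j),M(i)),
\]
and each summand is one-dimensional over $k=\cO/(\pi)$, spanned by $\overline{\phi_{ij}}$. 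The $k$-structure is canonical: $\overline\Lambda$ is a module over $\cO/(\pi)=k$ via the central action of $\cO$ on $\Lambda$ (scalars $\lambda e$ with $\lambda\in\cO$ act diagonally). This action is central and makes $\overline\Lambda$ a $k$-algebra. The multiplication rule is then
\[
 \overline{\phi_{ij}}\,\overline{\phi_{jk}}=
 \begin{cases}\overline{\phi_{ik}}&\text{if }p_j\text{ lies between }p_i,p_k,\\ 0&\text{otherwise.}\end{cases}
\]

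Next we define a $k$-algebra map $kQ_s\to\overline\Lambda$ by
\[
 e_i\mapsto\overline{e_i},\qquad \alpha_i\mapsto\overline{\phi_{i+1,i}},\qquad \beta_i\mapsto\overline{\phi_{i,i+1}}.
\]
The images satisfy the relations:
\begin{itemize}
\item $\beta_i\alpha_i\mapsto\overline{\phi_{i,i+1}\phi_{i+1,i}}$, and $c(i,i+1,i)=2(p_{i+1}-p_i)>0$ since $p_{i+1}>p_i$, so the image is zero;
\item similarly $\alpha_i\beta_i\mapsto0$.
\end{itemize}
This gives a map $B_s(k)\to\overline\Lambda$. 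Surjectivity follows because, for $i<k$, the monotonicity of the $p$'s gives $c=0$ at every step, so
\[
 \overline{\phi_{ki}}=\overline{\phi_{k,k-1}}\cdots\overline{\phi_{i+1,i}}.
\]
This is the image of the path $\alpha_{k-1}\cdots\alpha_i$, and the analogous product of $\beta$'s handles $i>k$. For bijectivity we compare dimensions. In $B_s(k)$ every nonzero path avoids consecutive $\alpha\beta$ or $\beta\alpha$, so it is monotone. Hence the paths $i\to j$ form a single path for each ordered pair, and $\dim_k B_s(k)=s^2=\dim_k\overline\Lambda$. The map is therefore an isomorphism of $k$-algebras, and the right-hand side visibly depends only on $k$ and $s$.

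The main subtle point is the "canonical $k$-algebra structure". The issue is that $\cO$ is an $R$-algebra but need not contain a coefficient field isomorphic to $k$, for instance when $k/R$ is inseparable. We avoid choosing a section entirely by letting $k=\cO/(\pi)$ act through the central action of $\cO$ on $\Lambda$, which descends modulo $\pi\Lambda$. We must also check that $\Hom/\pi\Hom$ is one-dimensional over $k$. This holds because a cyclic $\cO$-module modulo $\pi$ is $k$ or $0$, and here it is nonzero: the module $\pi^{(p_i-p_j)_+}\cO/(\pi^{p_i})$ is nonzero since $(p_i-p_j)_+<p_i$. The remaining steps are routine bookkeeping of exponents.
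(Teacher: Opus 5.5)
Your proposal is correct and follows essentially the same route as the paper: the cyclic generators $1\mapsto\pi^{(p_i-p_j)_+}$ of the Hom-corners, a composition exponent that vanishes exactly for monotone compositions, $(z_\pi)=\pi\Lambda$ by centrality (so each corner of the quotient is one-dimensional over the canonical $k=\mathcal{O}/(\pi)$), and surjectivity from $B_s(k)$ combined with the dimension count $s^2$. One harmless slip: $c(i,i+1,i)=p_{i+1}-p_i$, not $2(p_{i+1}-p_i)$, but only its positivity is used.
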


\begin{proof}
Put $A:=\Lambda_{\cO}(P)$, $J:=(z_\pi)$ and $\overline A:=A/J$. Scalar
multiplication gives a central homomorphism $\cO\to Z(A)$. Since
multiplication by $\pi$ becomes zero in $\overline A$, this homomorphism
induces a unital map
\(
 k=\cO/(\pi)\longrightarrow Z(\overline A).
\)
Since $J\subseteq\rad A$, the algebra $\overline A$ is nonzero, so this
map is injective and defines its canonical $k$-algebra structure.

For $1\le i,j\le s$, define
$u_{ji}\colon M(i)\to M(j)$ by
$u_{ji}(1):=\pi^{(p_j-p_i)_+}$. Since every
ideal of $\cO$ is a power of $(\pi)$, evaluation at $1$ gives
$$\Hom_{\cO}(M(i),M(j))=\cO u_{ji}\simeq
\cO/(\pi^{\min\{p_i,p_j\}}).$$
Let $e_i$ be the projection onto $M(i)$ followed by its inclusion into
$M_P$. Since $z_\pi$ is central,
$e_jJe_i=\pi\Hom_{\cO}(M(i),M(j))$, so the $(j,i)$-corner of
$\overline A$ has $k$-basis $\{\overline u_{ji}\}$. Thus
$\overline A=\bigoplus_{1\le i,j\le s}k\overline u_{ji}$ and
$\dim_k\overline A=s^2$.

{For $i,j,h\in\{1,\ldots,s\}$, direct evaluation at $1$ gives
$u_{hj}u_{ji}=\pi^{\delta(i,j,h)}u_{hi}$, where
$\delta(i,j,h):=(p_j-p_i)_+ +(p_h-p_j)_+ -(p_h-p_i)_+$. Since
$p_1<\cdots<p_s$, we have $\delta(i,j,h)=0$ if and only if
$i\le j\le h$ or $i\ge j\ge h$.
Otherwise, $\delta(i,j,h)\ge1$.} Thus, modulo $(z_\pi)$, monotone
compositions survive and every composition containing a change of direction
vanishes.

{Let $\varepsilon_i$ denote the trivial path at vertex $i$.}
The assignments
{$\varepsilon_i\mapsto\overline u_{ii}$},
$\alpha_i\mapsto\overline u_{i+1,i}$, and
$\beta_i\mapsto\overline u_{i,i+1}$
extend to a homomorphism $kQ_s\to\overline A$ that annihilates $I_s$
by the composition formula. The induced map $B_s(k)\to\overline A$ is
surjective because each $\overline u_{ji}$ is the image of the unique
monotone path from $i$ to $j$. Their residue classes form a $k$-basis of $B_s(k)$,
so both algebras have dimension $s^2$ and the map is an isomorphism.
\end{proof}

Central reduction now gives the weak-order description.

\begin{corollary}
\label{cor:weak-order-principle}
Let $\cO$ be a finite-dimensional commutative local principal ideal
$R$-algebra of Loewy length $\ell$, and let
$P=\{p_1<\cdots<p_s\}\subseteq\{1,\ldots,\ell\}$ be nonempty. Then
\[
\stt\Lambda_{\cO}(P)\simeq\Weak(\Sigma_{s+1})
\]
as posets. Consequently, the poset depends, up to isomorphism, only on
$s$, which is recovered as its number of atoms.
\end{corollary}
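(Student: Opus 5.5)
The plan is to reduce along a chain of poset isomorphisms. Put $A:=\Lambda_{\cO}(P)$.

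First, $z_\pi\in Z(A)\cap\rad A$, as noted before Theorem~\ref{thm:uniform-quotient}. Hence the ideal $(z_\pi)=z_\pi A$ lies in $J_A=(Z(A)\cap\rad A)A$. Proposition~\ref{prop:reduction}, which holds over an arbitrary field, then gives a poset isomorphism $\stt A\simeq\stt(A/(z_\pi))$. By Theorem~\ref{thm:uniform-quotient}, $A/(z_\pi)\simeq B_s(k)$.

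There is a small point about ground fields. The isomorphism is $k$-linear, while $\stt$ was defined for $R$-algebras. This causes no trouble, because $\tau$-rigidity, $\Fac$, and the count $|M|+|P|=|A|$ do not depend on the choice of central base field. Both $\Hom$-vanishing and AR translates are intrinsic to the module category, and $B_s(k)$ is finite-dimensional over $R$ as well as over $k$.

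It remains to prove $\stt B_s(k)\simeq\Weak(\Sigma_{s+1})$ for an arbitrary field $k$. The cited results do not suffice directly. \cite[Theorem~3.3]{KaseWeak} needs $k$ algebraically closed, and \cite[Corollary~1.4]{IZ} is stated for the Auslander algebra of $k[t]/(t^s)$.

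My first approach is to reduce to the Iyama--Zhang case via central reduction once more. Take $\cO'=k[t]/(t^s)$ and $P'=\{1,\ldots,s\}$. The Auslander algebra $\Lambda_{\cO'}(P')$ has support $\tau$-tilting poset $\Weak(\Sigma_{s+1})$ by \cite{IZ}. By Theorem~\ref{thm:uniform-quotient} its central quotient is $B_s(k)$, so Proposition~\ref{prop:reduction} gives $\stt B_s(k)\simeq\stt\Lambda_{\cO'}(P')\simeq\Weak(\Sigma_{s+1})$.

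The main obstacle is checking whether Iyama--Zhang's result holds over any field. If it does not, I would argue directly using the $g$-vector fan. The indecomposable $\tau$-rigid $B_s(k)$-modules are the interval modules $M_{[i,j]}$, each with all multiplicity-free composition factors and endomorphism ring $k$. This is a representation-theoretic computation on the string algebra $B_s(k)$ that is independent of $k$, because all its relations are monomial. Their $g$-vectors and the compatibility (presilting) relations are therefore the same as over $\overline{k}$. Since the poset structure is determined by this combinatorial data (mutation and the $g$-vectors), base change to $\overline k$ does not change the poset, and Kase's theorem applies.

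For the final claim, $s$ is recovered from the poset as follows. The atoms of $\Weak(\Sigma_{s+1})$ are the simple transpositions, and there are exactly $s$ of them. So $s$ is determined by the isomorphism class of the poset, and the poset depends on $P$ only through $s$.
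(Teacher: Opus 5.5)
Your first approach is correct and is essentially the paper's proof: central reduction of $\Lambda_{\cO}(P)$ to $B_s(k)$, the observation that $\stt B_s(k)$ is the same whether computed over $R$ or over $k$, a second central reduction (over $k$) identifying $\stt B_s(k)$ with the poset of the Auslander algebra of $k[t]/(t^s)$, and then \cite[Corollary~1.4]{IZ}, which the paper applies over an arbitrary field, so your $\overline{k}$ fallback is not needed. The only extra step in the paper is reconciling Iyama--Zhang's conventions (opposite orders, right modules over the opposite algebra); this cannot change the abstract isomorphism type, because $\Weak(\Sigma_{s+1})$ is self-dual.
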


\begin{proof}
Proposition~\ref{prop:reduction} and Theorem~\ref{thm:uniform-quotient}
give
\[
 \stt\Lambda_{\cO}(P)\simeq\stt B_s(k).
\]
Since $k$ is finite-dimensional over $R$, a $B_s(k)$-module is
finite-dimensional over $R$ if and only if it is finite-dimensional
over $k$. Hence computing
$\stt B_s(k)$ over $R$ or over $k$ gives the same poset.

{Let
$\Gamma_s(k):=\End_{k[t]/(t^s)}
\bigl(\bigoplus_{i=1}^s k[t]/(t^i)\bigr)$.
The indecomposable $k[t]/(t^s)$-modules are $k[t]/(t^i)$ for
$1\leq i\leq s$, so $\Gamma_s(k)$ is the Auslander algebra of $k[t]/(t^s)$.}
Applying Proposition~\ref{prop:reduction} and
Theorem~\ref{thm:uniform-quotient} over $k$ to the exponent set
$\{1,\ldots,s\}$ yields
$\stt B_s(k)\simeq\stt\Gamma_s(k)$.
The result follows from
$\stt\Gamma_s(k)\simeq\Weak(\Sigma_{s+1})$ by
\cite[Corollary~1.4(3),(4)]{IZ}.
{Those two parts identify the opposite left weak order with the
opposite generation order on right $\Gamma_s(k)^{\mathrm{op}}$-modules.
Reversing both orders gives precisely our convention for left
$\Gamma_s(k)$-modules.}
\end{proof}

We next compute the center and $HH_0$. {Retain the notation
$g_i$, $H(P)$ and $\cO_P$ from the Introduction.} Since $M(s)=\cO_P$,
scalar multiplication gives a faithful action of $\cO_P$ on $M_P$ and
central actions on $\Lambda_{\cO}(P)$ and its cocenter
$HH_0(\Lambda_{\cO}(P))$. We first compute the diagonal commutators.

\begin{lemma}
\label{lem:principal-diagonal-commutators}
Put $A:=\Lambda_{\cO}(P)$, and let $e_i$ be the idempotent
corresponding to the summand $M(i)$. Via scalar multiplication,
identify $e_iAe_i=\End_{\cO}(M(i))$ with
$\cO/(\pi^{p_i})e_i$, and set
$D:=\bigoplus_{i=1}^s e_iAe_i$ and
$N:=D\cap[A,A]$. Then the inclusion $D\hookrightarrow A$ induces an
isomorphism of $\cO_P$-modules
$D/N\xrightarrow{\sim}HH_0(A)$.
Moreover, $N$ is generated as an $\cO_P$-module by the following
elements, for all pairs $i<j$. If $d_{ij}:=p_j-p_i$, then these
elements are
\begin{equation}
\label{eq:principal-diagonal-commutators}
\begin{aligned}
&\pi^w(e_j-e_i)
&&\text{for }d_{ij}\leq w<p_i,\\
&\pi^we_j
&&\text{for }\max\{d_{ij},p_i\}\leq w<p_j,
\end{aligned}
\end{equation}
where empty ranges are omitted.
\end{lemma}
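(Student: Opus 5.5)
Write $A=\bigoplus_{i,j}e_jAe_i$ with $e_jAe_i=\cO u_{ji}$, as in the proof of Theorem~\ref{thm:uniform-quotient}. The plan has two parts: a Peirce-decomposition argument for the isomorphism $D/N\simeq HH_0(A)$, and an explicit computation of $N$.

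\textbf{The isomorphism.} For $i\ne j$ and $x\in e_jAe_i$ we have $x=e_jx-xe_j=[e_j,x]\in[A,A]$. Hence $A=D+[A,A]$, and the map $D\to HH_0(A)$ is surjective. Its kernel is $D\cap[A,A]=N$ by definition. All maps involved commute with the central action of $\cO_P$, since scalar multiplication by $\cO$ is central and factors through $\cO_P=\cO/(\pi^{p_s})$. So $D/N\simeq HH_0(A)$ as $\cO_P$-modules.

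\textbf{Reducing $N$ to corner commutators.} Write $[A,A]$ as the span of $[x,y]$ with $x\in e_bAe_a$ and $y\in e_aAe_b$ (after expanding in Peirce components). Commutators of non-matching components are off-diagonal, so they die under the projection $A\to D$ along $\bigoplus_{i\neq j}e_jAe_i$. The off-diagonal commutators themselves lie in $[A,A]$, and every off-diagonal element is a commutator. Therefore
\[
N=\operatorname{span}\{[x,y]: x\in e_bAe_a,\ y\in e_aAe_b\}\subseteq D.
\]
For $a=b$ these commutators vanish, because $e_aAe_a$ is commutative. For $a\ne b$, say $i<j$ with $\{a,b\}=\{i,j\}$, it suffices by $\cO$-linearity to take $x=\pi^m u_{ji}$ and $y=\pi^n u_{ij}$. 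Then $[x,y]=\pi^{m+n}(u_{ji}u_{ij}-u_{ij}u_{ji})$.

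\textbf{Evaluating the composites.} By the formula $u_{hj}u_{ji}=\pi^{\delta}u_{hi}$:
\begin{itemize}
\item $u_{ij}u_{ji}\colon M(i)\to M(i)$ is $1\mapsto \pi^{d_{ij}}\mapsto \pi^{d_{ij}}$ (since $(p_i-p_j)_+=0$), so it equals $\pi^{d_{ij}}e_i$;
\item $u_{ji}u_{ij}\colon M(j)\to M(j)$ is $1\mapsto 1\mapsto \pi^{d_{ij}}$, so it equals $\pi^{d_{ij}}e_j$.
\end{itemize}
Thus the generators are $\pi^w(e_j-e_i)$ for $w\ge d_{ij}$, and these are $\cO_P$-multiples of $\pi^{d_{ij}}(e_j-e_i)$.

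\textbf{Simplifying the generators.} Since $\pi^{p_i}e_i=0$ in $\cO/(\pi^{p_i})e_i$ and $\pi^{p_j}e_j=0$:
\begin{itemize}
\item for $w\ge p_i$, the element $\pi^w(e_j-e_i)$ reduces to $\pi^we_j$, which is nonzero only when $w<p_j$;
\item for $d_{ij}\le w<p_i$, we keep $\pi^w(e_j-e_i)$.
\end{itemize}
This yields exactly the list \eqref{eq:principal-diagonal-commutators}, and empty ranges drop out. In fact the list is just the set of $\pi$-power multiples of the single generator $\pi^{d_{ij}}(e_j-e_i)$, written in normalized form. It therefore generates the same $\cO_P$-module as the set of commutators computed above.

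The main obstacle is the bookkeeping in the reduction step: checking that every element of $D\cap[A,A]$ is a sum of diagonal corner commutators. The issue is that a general commutator $[x,y]$ with $x,y\in A$ has diagonal part $\sum_{a,b}[x_{ba},y_{ab}]$ only after projection. I will handle this by noting that $[A,A]=\bigl(\bigoplus_{i\ne j}e_jAe_i\bigr)\oplus N'$, where $N'$ is the span of the corner commutators $[x_{ba},y_{ab}]$, and that $N'\subseteq D$. Intersecting with $D$ then gives $N=N'$.
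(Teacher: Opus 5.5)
Your proof is correct and follows the paper's argument essentially step for step: the Peirce decomposition with $y=[e_j,y]$ for off-diagonal $y$, the reduction of $N$ to paired-corner commutators via commutativity of the diagonal corners (your splitting $[A,A]=\bigl(\bigoplus_{i\ne j}e_jAe_i\bigr)\oplus N'$ just spells out the paper's ``diagonal component'' step), the evaluation $[u_{ji},u_{ij}]=\pi^{d_{ij}}(e_j-e_i)$, and the normalization of its $\pi$-multiples using $\pi^{p_i}e_i=0$ and $\pi^{p_j}e_j=0$. One cosmetic point: a general element of $e_jAe_i$ is $\alpha u_{ji}$ with $\alpha\in\cO$, which need not be an $R$-combination of the $\pi^m u_{ji}$ when $k\ne R$; centrality gives $[\alpha u_{ji},\beta u_{ij}]=\alpha\beta[u_{ji},u_{ij}]$, so the $\cO_P$-generation statement is unaffected.
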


\begin{proof}
The orthogonal idempotents give
$A=D\oplus\bigoplus_{i\neq j}e_jAe_i$. For $y\in e_jAe_i$ with
$i\neq j$, we have $y=[e_j,y]\in[A,A]$. Thus
$D\to A/[A,A]$ is surjective with kernel $N$, proving the first
assertion. Since the diagonal corners are commutative, the diagonal
component of any commutator is a sum of paired-corner commutators
$xy-yx$, with $x\in e_jAe_i$ and $y\in e_iAe_j$. Each such commutator
lies in $N$, so they generate $N$.

Fix $i<j$ and put $d:=d_{ij}=p_j-p_i$. Since the $\cO_P$-modules
$e_jAe_i$ and $e_iAe_j$ are cyclic, {choose their respective
generators $u_{ji}$ and $u_{ij}$, where $u_{ji}(1)=\pi^d$ and
$u_{ij}\colon M(j)\twoheadrightarrow M(i)$ is the canonical
projection}. If $a,b\in\cO_P$, then
{$(au_{ji})(bu_{ij})=ab\pi^de_j$ and
$(bu_{ij})(au_{ji})=ab\pi^de_i$}, in the corresponding truncated diagonal corners. Taking $b=1$ shows
that these commutators generate $\cO_P\pi^d(e_j-e_i)$.
Its $\pi$-multiples are $\pi^w(e_j-e_i)$ for $d\leq w<p_j$:
the $e_i$-component vanishes when $w\geq p_i$, and both components
vanish when $w\geq p_j$. This gives the two ranges in
\eqref{eq:principal-diagonal-commutators}; summing over $i<j$ proves
the assertion.
\end{proof}

These relations give the center--module reconstruction.

\begin{theorem}
\label{thm:principal-hh-reconstruction}
Let $\cO$ be a finite-dimensional commutative local principal ideal
$R$-algebra of Loewy length $\ell$, and let
$P=\{p_1<\cdots<p_s\}\subseteq\{1,\ldots,\ell\}$ be nonempty.
Then scalar multiplication induces an $R$-algebra isomorphism
$\cO_P\xrightarrow{\sim}Z(\Lambda_{\cO}(P))$, and there is an
isomorphism of $\cO_P$-modules
\begin{equation}\label{eq:principal-hh-reconstruction}
 HH_0\bigl(\Lambda_{\cO}(P)\bigr)
 \simeq \bigoplus_{i=1}^{s}\cO_P/(\pi^{g_i}).
\end{equation}
Consequently, the center-module structure of
$HH_0(\Lambda_{\cO}(P))$ determines $\ms{g_1,\ldots,g_s}$.
\end{theorem}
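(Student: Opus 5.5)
The plan is to compute the center directly from the corner description used in the proof of Theorem~\ref{thm:uniform-quotient}, and then to read off $HH_0$ from Lemma~\ref{lem:principal-diagonal-commutators} by a change of generators.

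\textbf{Center.} Put $A:=\Lambda_{\cO}(P)$.
\begin{enumerate}
\item A central $z$ commutes with every $e_i$. Hence $z\in D$, and we may write $z=\sum_i a_ie_i$ with $a_i\in\cO/(\pi^{p_i})$.
\item Fix $i<j$ and use the generators $u_{ji}$ (with $u_{ji}(1)=\pi^{p_j-p_i}$) and the projection $u_{ij}$.
\begin{itemize}
\item The condition $zu_{ij}=u_{ij}z$ says that $a_j$ and $a_i$ agree modulo $\pi^{p_i}$.
\item The condition $zu_{ji}=u_{ji}z$ says $(a_j-a_i)\pi^{p_j-p_i}=0$ in $\cO/(\pi^{p_j})$. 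This follows from the first condition.
\end{itemize}
\item Since the $e_jAe_i$ are cyclic over $\cO$, these conditions also suffice. Every $a_i$ is therefore the reduction of $a_s\in\cO/(\pi^{p_s})=\cO_P$.
\item Consequently the scalar map $\cO_P\to Z(A)$ is surjective. It is injective because $\cO_P$ acts faithfully on $M(s)$.
\end{enumerate}

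\textbf{$HH_0$.} By Lemma~\ref{lem:principal-diagonal-commutators}, $HH_0(A)\simeq D/N$.
\begin{enumerate}
\item $D$ is the $\cO_P$-module presented by generators $e_1,\dots,e_s$ with relations $\pi^{p_i}e_i=0$.
\item $N$ is generated by the elements $\pi^{p_j-p_i}(e_j-e_i)$ for $i<j$. The elements listed in \eqref{eq:principal-diagonal-commutators} are exactly the nonzero $\pi$-multiples of these, as in the proof of the lemma.
\item Only adjacent pairs are needed. For $i<j$, write $e_j-e_i=\sum_{k=i}^{j-1}(e_{k+1}-e_k)$. Since $p_j-p_i\ge g_{k+1}$ for each such $k$, every summand of $\pi^{p_j-p_i}(e_j-e_i)$ is a multiple of $\pi^{g_{k+1}}(e_{k+1}-e_k)$.
\item Hence $D/N$ is presented by generators $e_1,\dots,e_s$ with relations
\[
\pi^{p_k}e_k=0 \quad(1\le k\le s),\qquad \pi^{g_{k+1}}(e_{k+1}-e_k)=0 \quad(1\le k<s).
\]
\item Change generators: $x_1:=e_1$ and $x_{k+1}:=e_{k+1}-e_k$, so that $e_k=x_1+\cdots+x_k$. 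In these generators:
\begin{itemize}
\item the adjacent relations become $\pi^{g_{k+1}}x_{k+1}=0$;
\item for $2\le m\le k$ we have $g_m\le p_k$, so $\pi^{p_k}x_m=0$ already holds; thus $\pi^{p_k}e_k=0$ reduces to $\pi^{p_k}x_1=0$;
\item the strongest of these is $\pi^{p_1}x_1=\pi^{g_1}x_1=0$.
\end{itemize}
\item The presentation therefore splits as $\bigoplus_{i=1}^s\cO_P/(\pi^{g_i})$, which is \eqref{eq:principal-hh-reconstruction}.
\end{enumerate}
This presentation bookkeeping is the only delicate step. The key check is that the diagonal relations $\pi^{p_k}e_k=0$ collapse to the single relation on $x_1$.

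\textbf{Recovering the gap multiset.} $\cO_P$ is a local principal ideal ring, so a finitely generated $\cO_P$-module decomposes uniquely into cyclic modules $\cO_P/(\pi^{g})$. Concretely, the number of summands with $g>m$ equals
\[
\dim_k\bigl(\pi^mM/\pi^{m+1}M\bigr).
\]
The $R$-algebra $\cO_P\simeq Z(A)$, together with its module $HH_0(A)$, therefore determines $\ms{g_1,\ldots,g_s}$. This multiset is intrinsic: for any isomorphism of algebra--module pairs, $\varphi$ carries the unique maximal ideal $\rad\cO_P=(\pi)$ onto the maximal ideal of the target algebra. The powers of these maximal ideals correspond under $\varphi$, so $\psi$ preserves the dimension counts above.
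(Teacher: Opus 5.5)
Your proof is correct and follows the paper's argument: the center is computed by commuting with the vertex idempotents and the canonical projections, and $HH_0$ by the same unitriangular change of generators $x_k=e_k-e_{k-1}$ applied to the description in Lemma~\ref{lem:principal-diagonal-commutators}. Two steps differ only in bookkeeping: you first reduce $N$ to adjacent-pair generators instead of constructing the inverse map $\overline\psi$ explicitly, and you recover the gaps from $\dim_k(\pi^mM/\pi^{m+1}M)$ rather than from Krull--Schmidt and Loewy lengths.
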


\begin{proof}
Put $A:=\Lambda_{\cO}(P)$. {Let $e_i$ and $D$ be as in
Lemma~\ref{lem:principal-diagonal-commutators}.} Since
$M(s)=\cO/(\pi^{p_s})=\cO_P$ is a faithful $\cO_P$-module, scalar
multiplication gives an injective $R$-algebra homomorphism
$\cO_P\to Z(A)$. Conversely, let $a\in Z(A)$ and write
$a_{ji}:=e_jae_i$. Commutation with the $e_i$ forces $a_{ji}=0$ for
$i\neq j$. Each diagonal entry $a_{ii}$ acts on
$M(i)=\cO/(\pi^{p_i})$ by multiplication by some
$q_i\in\cO/(\pi^{p_i})$. If $i<j$, then commutation with the
canonical projection $M(j)\twoheadrightarrow M(i)$ gives
$q_j\equiv q_i\pmod{\pi^{p_i}}$. Taking $j=s$, it follows that every
$q_i$ is induced by $q_s\in\cO_P$. Therefore, scalar multiplication
induces an $R$-algebra isomorphism
$\cO_P\xrightarrow{\sim}Z(A)$.

It remains to compute $HH_0(A)$. By
Lemma~\ref{lem:principal-diagonal-commutators}, $HH_0(A)$ is the
quotient of {$D$} by the relations in
\eqref{eq:principal-diagonal-commutators}. Let $\overline e_i$ denote
the image of $e_i$ in $HH_0(A)$, and put
$h_1:=\overline e_1$ and
$h_i:=\overline e_i-\overline e_{i-1}$ for $2\leq i\leq s$. Since
$g_1=p_1$, the relation $\pi^{p_1}\overline e_1=0$ gives
$\pi^{g_1}h_1=0$. Let $i\geq2$. If $g_i<p_{i-1}$, then the first
relation in \eqref{eq:principal-diagonal-commutators} for the pair
$(i-1,i)$, with $w=g_i$, gives $\pi^{g_i}h_i=0$. If
$g_i\geq p_{i-1}$, then the second relation gives
$\pi^{g_i}\overline e_i=0$, while
$\pi^{g_i}\overline e_{i-1}=0$ follows from
$\pi^{p_{i-1}}\overline e_{i-1}=0$. Hence
$\pi^{g_i}h_i=0$ for every $i$.

Put {$V:=\bigoplus_{i=1}^s\cO_P/(\pi^{g_i})\xi_i$}. Since
$\overline e_i=h_1+\cdots+h_i$, the assignments
{$\xi_i\mapsto h_i$}
define a surjective $\cO_P$-linear map
$\varphi\colon V\twoheadrightarrow HH_0(A)$. To construct its
inverse, consider the assignment
{$e_i\mapsto\xi_1+\cdots+\xi_i$}. Since
$p_i=g_1+\cdots+g_i$, the element $\pi^{p_i}$ annihilates
{$\xi_1,\ldots,\xi_i$}. Thus this assignment induces an
$\cO_P$-linear map {$\psi\colon D\to V$}. Let $i<j$. Since
$d_{ij}=g_{i+1}+\cdots+g_j$, if $w\geq d_{ij}$, then
$\psi(\pi^w(e_j-e_i))
={\pi^w(\xi_{i+1}+\cdots+\xi_j)}=0$. Hence $\psi$ annihilates the first
family in \eqref{eq:principal-diagonal-commutators}. If
$w\geq\max\{d_{ij},p_i\}$, then $w\geq g_r$ for every
$1\leq r\leq j$, and therefore
$\psi(\pi^we_j)={\pi^w(\xi_1+\cdots+\xi_j)}=0$. Thus $\psi$ also
annihilates the second family and factors through an $\cO_P$-linear
map $\overline\psi\colon HH_0(A)\to V$. Since
{$\overline\psi(h_i)=\xi_i$ and
$\varphi(\xi_1+\cdots+\xi_i)=\overline e_i$}, the maps $\varphi$ and
$\overline\psi$ are inverse. Hence
\eqref{eq:principal-hh-reconstruction} follows.

Each summand $\cO_P/(\pi^{g_i})$ has a local endomorphism ring and
Loewy length $g_i$. The Krull--Schmidt theorem therefore recovers
$\ms{g_1,\ldots,g_s}$ from $HH_0(A)$ as a module over
$Z(A)\simeq\cO_P$.
\end{proof}

Derived invariance of {the pair $(Z(A),HH_0(A))$} gives the
following consequence of Theorem~\ref{thm:principal-hh-reconstruction}.

\begin{corollary}
\label{cor:local-derived-reconstruction}
Let $\cO$ and $\cO'$ be finite-dimensional commutative local principal
ideal $R$-algebras with $\rad\cO=(\pi)$ and
$\rad\cO'=(\pi')$. Let
$P\subseteq\{1,\ldots,\operatorname{LL}(\cO)\}$ and
$Q\subseteq\{1,\ldots,\operatorname{LL}(\cO')\}$ be nonempty, and put
$A:=\Lambda_{\cO}(P)$ and $B:=\Lambda_{\cO'}(Q)$. If $A$ and $B$ are
$R$-linearly derived equivalent, then
$\cO/(\pi^{\max P})\simeq\cO'/(\pi'^{\max Q})$ as $R$-algebras and
$H(P)=H(Q)$. Consequently, $|P|=|Q|$ and
$\stt A\simeq\stt B$. In particular, the support $\tau$-tilting poset
is a derived invariant within this local family.
\end{corollary}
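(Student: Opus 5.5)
The plan is to use derived invariance of the center together with its cap-product action on degree-zero Hochschild homology. By \cite[Theorem~2.2]{ArmentaKeller}, an $R$-linear derived equivalence between $A$ and $B$ induces an $R$-algebra isomorphism $\varphi\colon Z(A)\to Z(B)$. It also induces an $R$-linear isomorphism $\psi\colon HH_0(A)\to HH_0(B)$ that intertwines the natural $Z(A)$- and $Z(B)$-module structures. In degree zero, the cap product of $HH^0=Z$ on $HH_0$ is just multiplication on $A/[A,A]$. The first point to check is therefore that the module structure in \cite{ArmentaKeller} agrees with the one fixed in the Introduction. Once that is done, we have an isomorphism of algebra--module pairs $(Z(A),HH_0(A))\simeq(Z(B),HH_0(B))$.

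Next, apply Theorem~\ref{thm:principal-hh-reconstruction} to both sides. It gives $Z(A)\simeq\cO/(\pi^{\max P})$ and $Z(B)\simeq\cO'/(\pi'^{\max Q})$ as $R$-algebras. This proves the first assertion. It also gives
\[
HH_0(A)\simeq\bigoplus_{g\in H(P)}\cO_P/(\pi^{g}),\qquad
HH_0(B)\simeq\bigoplus_{g\in H(Q)}\cO'_Q/(\pi'^{g}).
\]

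To compare the gap multisets, transport the $Z(B)$-module $HH_0(B)$ along $\varphi$. This yields a $Z(A)$-module isomorphic to $HH_0(A)$. Since $\varphi$ is an isomorphism of local rings, it sends the radical to the radical, so $\varphi(\pi)$ generates $\rad \cO'_Q$. Hence $\cO'_Q/(\pi'^g)$, pulled back along $\varphi$, is the cyclic module $\cO_P/(\pi^g)$, and it still has Loewy length $g$. Krull--Schmidt over the local ring $\cO_P$ then matches the indecomposable summands, and comparing their Loewy lengths gives $H(P)=H(Q)$. This is exactly the final step of the proof of Theorem~\ref{thm:principal-hh-reconstruction}, applied through $\varphi$.

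For the remaining claims, note that $|P|=|H(P)|$ as multisets, so $|P|=|Q|$. Corollary~\ref{cor:weak-order-principle} then gives $\stt A\simeq\Weak(\Sigma_{|P|+1})\simeq\stt B$, and the final sentence of the statement is a restatement of this.

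The main obstacle is the first step: making sure the derived-invariance statement of \cite{ArmentaKeller} really yields a compatible pair of isomorphisms, with the module structure matching our convention. The rest is bookkeeping with Theorem~\ref{thm:principal-hh-reconstruction} and Corollary~\ref{cor:weak-order-principle}.
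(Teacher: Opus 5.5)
Your proposal is correct and follows the paper's proof essentially step for step. The paper also applies \cite[Theorem~2.2]{ArmentaKeller} with $M=A$ in degree zero to obtain compatible isomorphisms $Z(A)\simeq Z(B)$ and $HH_0(A)\simeq HH_0(B)$, reads off the centers and the gap multisets from Theorem~\ref{thm:principal-hh-reconstruction} via transport of scalars and Krull--Schmidt, and concludes with Corollary~\ref{cor:weak-order-principle}; your observation that $\varphi$ sends the image of $\pi$ to a unit multiple of the image of $\pi'$, so that $\cO'_Q/(\pi'^g)$ pulls back to $\cO_P/(\pi^g)$, just spells out what the paper summarizes as ``compatibility of $\psi$ with $\varphi$.''
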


\begin{proof}
{Since $A$ and $B$ are $R$-linearly derived equivalent,
derived invariance of the cap product
\cite[Theorem~2.2]{ArmentaKeller}, applied with $M=A$ in degree zero,
yields an $R$-algebra isomorphism
$\varphi\colon Z(A)\xrightarrow{\sim}Z(B)$ and an $R$-linear
isomorphism $\psi\colon HH_0(A)\xrightarrow{\sim}HH_0(B)$ satisfying
$\psi(zm)=\varphi(z)\psi(m)$ for all $z\in Z(A)$ and
$m\in HH_0(A)$.}

By Theorem~\ref{thm:principal-hh-reconstruction}, $\varphi$ identifies
the centers $\cO/(\pi^{\max P})$ and $\cO'/(\pi'^{\max Q})$.
The same theorem decomposes $HH_0(A)$ and $HH_0(B)$ into indecomposable
modules with Loewy-length multisets $H(P)$ and $H(Q)$. Compatibility of
$\psi$ with $\varphi$ identifies these decompositions after transport of
scalars. The Krull--Schmidt theorem gives $H(P)=H(Q)$, and hence
$|P|=|Q|$. Thus
Corollary~\ref{cor:weak-order-principle} yields
$\stt A\simeq\Weak(\Sigma_{|P|+1})
=\Weak(\Sigma_{|Q|+1})\simeq\stt B$.
\end{proof}

We apply these results to the primary endomorphism algebras of
centralizer matrix algebras. Let $f\in R[x]$ be monic and irreducible,
and let $P=\{p_1<\cdots<p_s\}$ be nonempty. Put
$\cO_f:=R[x]/(f^{p_s})$ and $\pi_f:=f+(f^{p_s})$.
Then $\cO_f$ is a local principal ideal $R$-algebra with maximal ideal
$(\pi_f)$, Loewy length $p_s$, and residue field $K_f$.
{In $\Lambda_f(P)=\End_{R[x]}(\bigoplus_{i=1}^sR[x]/(f^{p_i}))$,}
let $z_f$ act by multiplication by $f$ on each summand.

\begin{corollary}
\label{cor:local-weak-order}
With the above notation,
$\Lambda_f(P)/(z_f)\simeq B_s(K_f)$ as $K_f$-algebras, and
\[
\stt\Lambda_f(P)\simeq\Weak(\Sigma_{s+1})
\]
as posets. Consequently, the isomorphism type of
$\stt\Lambda_f(P)$ depends only on $s=|P|$.
\end{corollary}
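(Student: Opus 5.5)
This is a direct specialization of Theorem~\ref{thm:uniform-quotient} and Corollary~\ref{cor:weak-order-principle} to the local principal ideal algebra $\cO=\cO_f=R[x]/(f^{p_s})$. So the plan is to verify the hypotheses and then identify the objects.

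\textbf{Checking the hypotheses on $\cO_f$.} Since $R[x]$ is a principal ideal domain and $f$ is irreducible, the ideals of $\cO_f$ correspond to the divisors of $f^{p_s}$. These are the powers $f^a$ with $0\le a\le p_s$, so $\cO_f$ is a finite-dimensional commutative local principal ideal $R$-algebra. Its maximal ideal is $(\pi_f)$, and its Loewy length is $\ell=p_s$, because $\pi_f^{p_s-1}\neq0$ and $\pi_f^{p_s}=0$. The residue field is $\cO_f/(\pi_f)\simeq R[x]/(f)=K_f$. Moreover $P\subseteq\{1,\ldots,p_s\}=\{1,\ldots,\ell\}$, as the earlier results require.

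\textbf{Identifying the algebras and central elements.} For each $i$, $f^{p_s}$ annihilates $R[x]/(f^{p_i})$, so this module is naturally the $\cO_f$-module $\cO_f/(\pi_f^{p_i})$. An $R$-linear map between such modules is $R[x]$-linear if and only if it commutes with $x$. Since $\cO_f$ is generated by the image of $x$, this holds if and only if the map is $\cO_f$-linear. Hence $\Lambda_f(P)=\Lambda_{\cO_f}(P)$ as $R$-algebras. Under this identification, $z_f$ is the element $z_{\pi_f}$ acting by $\pi_f$ on every summand, so the two-sided ideals $(z_f)$ and $(z_{\pi_f})$ coincide. The canonical $k$-structure on the quotient in Theorem~\ref{thm:uniform-quotient} is induced from scalars in $\cO_f$, and here $k=K_f$.

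\textbf{Concluding.} Theorem~\ref{thm:uniform-quotient} then gives $\Lambda_f(P)/(z_f)\simeq B_s(K_f)$ as $K_f$-algebras. Corollary~\ref{cor:weak-order-principle} gives $\stt\Lambda_f(P)\simeq\Weak(\Sigma_{s+1})$, and the dependence on $s$ alone follows as stated there.

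The only point needing care is the identification of $R[x]$-linear and $\cO_f$-linear endomorphisms, together with the claim that $\cO_f$ is principal local. Both are elementary, and I expect no real obstacle.
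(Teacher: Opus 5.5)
Your proposal is correct and follows essentially the same route as the paper: identify $\Lambda_f(P)$ with $\Lambda_{\cO_f}(P)$ so that $z_f$ corresponds to $z_{\pi_f}$, then invoke Theorem~\ref{thm:uniform-quotient} and Corollary~\ref{cor:weak-order-principle}. You additionally spell out the routine checks that $\cO_f$ is local principal with Loewy length $p_s$ and residue field $K_f$, which the paper states without proof.
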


\begin{proof}
Since $f^{p_s}$ annihilates every summand $R[x]/(f^{p_i})$, the
$R[x]$-action factors through $\cO_f$. Hence there is a natural
$R$-algebra isomorphism
$\Lambda_f(P)\simeq\Lambda_{\cO_f}(P)$ carrying $z_f$ to
$z_{\pi_f}$. Therefore, Theorem~\ref{thm:uniform-quotient} gives the
first assertion, and Corollary~\ref{cor:weak-order-principle} gives
the second.
\end{proof}

The weak-order poset does not determine the residue field, even up to
$R$-algebra isomorphism.

\begin{remark}
\label{rem:residue-field-blindness}
{
Let $f,f'\in R[x]$ be monic and irreducible, and suppose that
$K_f\not\simeq K_{f'}$ as $R$-algebras. For
$K\in\{K_f,K_{f'}\}$, one has
$B_s(K)=KQ_s/I_s$ and
$B_s(K)/\rad B_s(K)\simeq K^s$. Hence the endomorphism ring of every
simple $B_s(K)$-module is isomorphic to $K$. Since an $R$-linear
Morita equivalence preserves the endomorphism rings of simple modules,
$B_s(K_f)$ and $B_s(K_{f'})$ are not Morita equivalent as
$R$-algebras. Nevertheless, the proof of
Corollary~\ref{cor:weak-order-principle} gives
$\stt B_s(K_f)\simeq\Weak(\Sigma_{s+1})
\simeq\stt B_s(K_{f'})$.
}
\end{remark}

\section{Centralizer matrix algebras}\label{sec:centralizer}

Primary decomposition and Morita reduction extend the local reconstruction
results of Section~\ref{sec:uniform} to arbitrary centralizer matrix
algebras over the ground field. We recover the T-type from the support
$\tau$-tilting poset and the gap data from the center action on $HH_0$, then
compare the resulting reconstruction relations and their homological
consequences.

\subsection{Primary decomposition and recovery of the T-type}
\label{subsec:product-recovery}
To prove Theorem~\ref{thm:main-recover}, we first derive the product formula
and then recover the factor sizes, and hence the T-type, from the abstract
product poset.

{Let \(c\in M_n(R)\), and retain the \(R[x]\)-module
\(V_c\) introduced above. By the
elementary-divisor decomposition,
\[
\begin{aligned}
 V_c&\simeq\bigoplus_{f\in\Irr(c)}X_f,\qquad
 X_f:=V_c(f),\\
 X_f&\simeq\bigoplus_{a\in P_c(f)}
       (R[x]/(f^a))^{m_c(f,a)}.
\end{aligned}
\]
}Here $m_c(f,a)\geq1$ is the multiplicity
of $f^a$ among the elementary divisors of $c$.

We have $S_n(c,R)\simeq\End_{R[x]}(V_c)$. For
{distinct \(f,h\in\Irr(c)\) and $a,b\geq1$,}
{\[
 \Hom_{R[x]}(R[x]/(f^a),R[x]/(h^b))=0.
\]}
{Indeed, since \((f^a,h^b)=1\), multiplication by \(f^a\) is invertible on}
{\(R[x]/(h^b)\). If \(\varphi:R[x]/(f^a)\to R[x]/(h^b)\), then}
\(f^a\varphi(1)=0\), and hence \(\varphi=0\). Consequently,
\begin{equation}\label{eq:primary-product}
 S_n(c,R)
 \simeq
 \prod_{f\in\Irr(c)}\End_{R[x]}(X_f).
\end{equation}

For each \(f\in\Irr(c)\), let
$Y_f:=\bigoplus_{a\in P_c(f)}R[x]/(f^a)$. The module \(Y_f\) contains
one representative of each isomorphism class of indecomposable direct
summands of \(X_f\), so $\add X_f=\add Y_f$.
The additive-generator form of Morita theory
\cite[Lemma~2.2]{LiXiD} gives
\begin{equation}\label{eq:additive-generator-reduction}
 \End_{R[x]}(X_f)
 {\ \text{is Morita equivalent to}\ }
 \End_{R[x]}(Y_f)
 =
 \Lambda_f(P_c(f)).
\end{equation}

\begin{proof}[{\bf Proof of the product formula in
Theorem~\ref{thm:main-recover}}]
Support \(\tau\)-tilting posets are Morita invariant. Moreover,
\((A_1\times A_2)\text{-}\mathrm{mod}\simeq
A_1\text{-}\mathrm{mod}\times A_2\text{-}\mathrm{mod}\), and the support
\(\tau\)-tilting conditions and the generation order are computed
componentwise. {Hence
\[
 \stt S_n(c,R)\simeq
 \prod_{f\in\Irr(c)}\stt\Lambda_f(P_c(f)).
\]
{Since the exponent set has cardinality $\kappa_c(f)$,}
Corollary~\ref{cor:local-weak-order} gives
\[
 \stt\Lambda_f(P_c(f))\simeq
 \Weak(\Sigma_{\kappa_c(f)+1}),
\]
and therefore
\[
 \stt S_n(c,R)\simeq
 \prod_{f\in\Irr(c)}\Weak(\Sigma_{\kappa_c(f)+1}).
\]
Each factor is finite, so \(S_n(c,R)\) is \(\tau\)-tilting finite, and
\[
 |\stt S_n(c,R)|
 =\prod_{f\in\Irr(c)}(\kappa_c(f)+1)!.
\]}
\end{proof}

For \(s\geq1\), write $L_s:=\Weak(\Sigma_{s+1})$.
We now show that the integers \(s\) can be recovered directly from
a product of the lattices \(L_s\).

Let \(L\) be a finite lattice with minimum element \(\hat 0\).  For \(x\leq y\) in \(L\), write
$[x,y]:=\{z\in L\mid x\leq z\leq y\}$ for the corresponding order
interval.  Recall that an atom of \(L\) is
an element covering \(\hat 0\).  We define the \emph{atom-interaction
graph} {\(\mathcal A(L)\)} to have the atoms of \(L\) as its vertices, with two
distinct atoms \(a,b\) joined by an edge if
\(
 \bigl|[\hat 0,a\vee b]\bigr|=6.
\)
An order isomorphism \(L\simeq L'\) preserves atoms, joins, and interval
cardinalities, and hence induces a graph isomorphism
{$\mathcal A(L)\simeq\mathcal A(L')$}.

\begin{lemma}
\label{lem:atom-graph}
The graph {\(\mathcal A(L_s)\)} is a path with \(s\) vertices.  More generally,
if $L=\prod_{r=1}^{q}L_{s_r}$, then the multiset of connected-component
sizes of {\(\mathcal A(L)\)} is $\ms{s_1,\ldots,s_q}$.
\end{lemma}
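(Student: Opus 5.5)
The plan is to work first in a single factor $L_s=\Weak(\Sigma_{s+1})$ and then pass to products. Recall the standard facts about the (left) weak order on $\Sigma_{s+1}$. Its minimum is the identity. Its atoms are the simple transpositions $\sigma_1,\ldots,\sigma_s$. For two distinct atoms $\sigma_i,\sigma_j$, the join $\sigma_i\vee\sigma_j$ is the longest element of the parabolic subgroup $W_{\{i,j\}}$, and the interval $[\hat 0,\sigma_i\vee\sigma_j]$ is exactly $W_{\{i,j\}}$ with its weak order. This is the standard description of lower intervals below the longest element of a standard parabolic subgroup. Hence $|[\hat0,\sigma_i\vee\sigma_j]|=|W_{\{i,j\}}|$. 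This equals $6$ when $|i-j|=1$, since the subgroup is then of type $A_2$, and equals $4$ when $|i-j|\ge2$, since $\sigma_i$ and $\sigma_j$ commute. So $\mathcal A(L_s)$ has an edge exactly between $\sigma_i$ and $\sigma_{i+1}$, which makes it the path with $s$ vertices. If one prefers to avoid Coxeter language, the same counts follow by listing, for $|i-j|=1$, the six permutations of the three consecutive letters involved, and for $|i-j|\ge 2$ the four elements $e,\sigma_i,\sigma_j,\sigma_i\sigma_j$.

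For the product $L=\prod_{r=1}^q L_{s_r}$ I use componentwise order, so $\hat 0=(\hat0,\ldots,\hat0)$. The atoms of $L$ are precisely the tuples with one coordinate an atom of its factor and all other coordinates $\hat0$. Joins are computed componentwise, and intervals factor as products: $[\hat0,(x_r)]=\prod_r[\hat0,x_r]$.

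Consider two atoms lying in the same coordinate $r$. Their join interval is $[\hat0,a\vee b]$ computed in $L_{s_r}$, times singletons in the other coordinates. So the edge relation among such atoms is exactly that of $\mathcal A(L_{s_r})$.

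Now consider atoms $a,b$ in different coordinates $r\ne r'$. Then $a\vee b$ has exactly two nonzero coordinates, each an atom of its factor. The interval is the product of two $2$-element chains, which has $4\neq 6$ elements, so there is no edge.

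Therefore $\mathcal A(L)$ is the disjoint union of the paths $\mathcal A(L_{s_r})$. Each path is connected with $s_r$ vertices, so the multiset of connected-component sizes is $\ms{s_1,\ldots,s_q}$.

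The only step requiring care is the identification of $[\hat0,\sigma_i\vee\sigma_j]$ in the weak order. I would justify it by citing the standard fact that for $J\subseteq S$ the join of $J$ in weak order is $w_0(J)$, and that $w\le w_0(J)$ if and only if $w\in W_J$. Alternatively, I would check it directly using inversion sets: in the left weak order, $w\le w'$ if and only if the inversion set of $w$ is contained in that of $w'$. The elements above both $\sigma_i$ and $\sigma_j$ must invert both corresponding adjacent pairs of values, and the minimal such element is $w_0(\{i,j\})$. Its lower interval consists exactly of the permutations whose inversions lie among those of $w_0(\{i,j\})$, which is $W_{\{i,j\}}$.
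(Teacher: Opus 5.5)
Your proof is correct and follows the paper's route. You identify $[\hat 0,\sigma_i\vee\sigma_j]$ with the weak order on the rank-two parabolic subgroup $W_{\{i,j\}}$, which has $6$ or $4$ elements according as $|i-j|=1$ or not, and you note that in the product atoms lie in a single coordinate while atoms from different coordinates give a four-element interval. The only difference is that you justify the join being $w_0(\{i,j\})$ and the componentwise computation of joins and intervals explicitly, where the paper simply cites Bj\"orner--Brenti.
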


\begin{proof}
The atoms of \(L_s\) are the simple transpositions
\(
 \sigma_1,\ldots,\sigma_s.
\)
For distinct atoms \(a,b\), the interval \([\hat 0,a\vee b]\) is the weak
order on the rank-two parabolic subgroup \(\langle a,b\rangle\), which has
{twice the order of $ab$ elements}; see
\cite[Sections~3.1--3.2]{BjornerBrenti}. In type
\(A_s\), this number is six exactly for adjacent simple transpositions.
Therefore, {$\mathcal A(L_s)=
\sigma_1-\sigma_2-\cdots-\sigma_s$}.

In $L=\prod_{r=1}^{q}L_{s_r}$, every atom is supported in exactly one
factor. Atoms from distinct factors have a four-element interval below
their join, so they are not adjacent in {\(\mathcal A(L)\)}.
Within the \(r\)-th factor, the preceding argument gives a path with
\(s_r\) vertices. Thus the connected-component sizes are
$s_1,\ldots,s_q$, up to permutation.
\end{proof}

\begin{proof}[{\bf Proof of the equivalences in
Theorem~\ref{thm:main-recover}}]
Suppose first that $T_R(c)=T_R(d)$. Then
{
\[
 \ms{\kappa_c(f)\mid f\in\Irr(c)}
 ={\ms{\kappa_d(h)\mid h\in\Irr(d)}}.
\]
}The product formula gives
$\stt S_n(c,R)\simeq\stt S_m(d,R)$, proving \textup{(1)}$\Rightarrow$\textup{(2)}.

Conversely, suppose that $\stt S_n(c,R)\simeq\stt S_m(d,R).$
Their atom-interaction graphs are isomorphic. By the product formula and
Lemma~\ref{lem:atom-graph}, the multisets of factor sizes coincide, so
\(T_R(c)=T_R(d)\). This proves \textup{(2)}$\Rightarrow$\textup{(1)}.

It remains to compare \textup{(2)} and \textup{(3)}. By the finiteness
assertion proved above, both centralizer matrix algebras are \(\tau\)-tilting
finite. Therefore, every torsion class is
functorially finite by \cite[Theorem~3.8]{DIJ}.  The
Adachi--Iyama--Reiten correspondence consequently gives the poset
isomorphisms $\stt S_n(c,R)\simeq\tors S_n(c,R)$ and
$\stt S_m(d,R)\simeq\tors S_m(d,R)$, where a support \(\tau\)-tilting module \(M\) corresponds to
\(\Fac M\).

Thus \textup{(2)} and \textup{(3)} are equivalent.
\end{proof}

\begin{remark}
\label{ex:scalar-extension}
Let $R=\mathbb Q$ and let $c$ be the companion matrix of $x^2+1$. Then
{$S_2(c,\mathbb Q)\simeq\mathbb Q(i)$}, so $\stt S_2(c,\mathbb Q)$ is the
two-element chain $\Weak(\Sigma_2)$. After scalar extension to $\mathbb C$,
the polynomial splits as $(x-i)(x+i)$. Hence the centralizer matrix algebra becomes
$\mathbb C\times\mathbb C$, and its support $\tau$-tilting poset is the
four-element Boolean lattice $\Weak(\Sigma_2)^2$. Thus
Theorem~\ref{thm:main-recover} cannot be obtained by passing formally to a
splitting field and descending.
\end{remark}

\subsection{Centers and their local factors}
\label{subsec:center}

The double-centralizer theorem for a single matrix and the Chinese
remainder theorem give the classical description below. Its local
factors identify the blocks and enter the reconstruction invariants below.
For related structural results on centralizer matrix algebras, see
\cite{XiZhang1,XiZhang2}.

\begin{proposition}
\label{prop:center}
Let \(c\in M_n(R)\), and let \(\mu_c\) be its minimal polynomial. Then
\[
        Z\bigl(S_n(c,R)\bigr)
        =
        R[c]
        {\simeq}
        R[x]/(\mu_c)
        {\simeq}
        \prod_{f\in\Irr(c)}
        U_c(f).
\]
\end{proposition}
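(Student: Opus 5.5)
The plan is to compute the center blockwise, using the primary decomposition \eqref{eq:primary-product} together with Theorem~\ref{thm:principal-hh-reconstruction}, and then to match the result with $R[c]$.

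\emph{Step 1: the last two isomorphisms.} Evaluation $x\mapsto c$ gives a surjection $R[x]\to R[c]$. Its kernel is $(\mu_c)$ by definition of the minimal polynomial, so $R[c]\simeq R[x]/(\mu_c)$. Next, $\mu_c=\prod_{f\in\Irr(c)}f^{r_c(f)}$, because $\mu_c$ is the least common multiple of the elementary divisors and $r_c(f)=\max P_c(f)$. The factors $f^{r_c(f)}$ are pairwise coprime, so the Chinese remainder theorem gives $R[x]/(\mu_c)\simeq\prod_f R[x]/(f^{r_c(f)})=\prod_f U_c(f)$.

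\emph{Step 2: the center equals $R[c]$.} The inclusion $R[c]\subseteq Z(S_n(c,R))$ is clear: every polynomial in $c$ commutes with every matrix commuting with $c$, and $R[c]\subseteq S_n(c,R)$. For the reverse inclusion, use $S_n(c,R)\simeq\End_{R[x]}(V_c)\simeq\prod_f\End_{R[x]}(X_f)$ from \eqref{eq:primary-product}, which gives $Z(S_n(c,R))\simeq\prod_f Z(\End_{R[x]}(X_f))$. Since $\End_{R[x]}(X_f)$ is Morita equivalent to $\Lambda_f(P_c(f))$ by \eqref{eq:additive-generator-reduction}, and centers are Morita invariant, Theorem~\ref{thm:principal-hh-reconstruction} (applied via $\Lambda_f(P)\simeq\Lambda_{\cO_f}(P)$ as in Corollary~\ref{cor:local-weak-order}) gives $\dim_R Z(\End_{R[x]}(X_f))=\dim_R U_c(f)=\deg(f)\,r_c(f)$.

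Summing over $f$, we get $\dim_R Z(S_n(c,R))=\deg\mu_c=\dim_R R[c]$. Combined with the inclusion, this forces equality. This dimension count avoids citing Jacobson's double centralizer theorem, although that theorem could be cited instead.

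\emph{Main obstacle.} The delicate point is the Morita-invariance step, which is only needed for the dimension count. It requires that center dimension is preserved under Morita equivalence, which holds because the center is isomorphic to the center of the module category. Alternatively, one can avoid Morita theory by redoing the central computation in the proof of Theorem~\ref{thm:principal-hh-reconstruction} directly for $X_f$ with multiplicities. In that case a central element must act by a scalar $q_a$ on each summand $R[x]/(f^a)$, since it commutes with the matrix units among isomorphic copies. Compatibility of these scalars along the projections then forces them all to come from a single element of $U_c(f)$, exactly as in the multiplicity-free case.
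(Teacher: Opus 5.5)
Your proposal is correct, but it reaches the key equality $Z(S_n(c,R))=R[c]$ by a different route from the paper; your Step~1 coincides with the paper's treatment of the last two isomorphisms.

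The paper argues inside $M_n(R)$. Since $c\in\mathcal C:=S_n(c,R)$, one has $C_{M_n(R)}(\mathcal C)\subseteq\mathcal C$, so $Z(\mathcal C)=C_{M_n(R)}(C_{M_n(R)}(c))$. It then cites Jacobson's double-centralizer theorem for a single matrix to identify this with $R[c]$.

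You instead prove the elementary inclusion $R[c]\subseteq Z(\mathcal C)$ and close the gap by a dimension count. You compute $\dim_R Z(\mathcal C)$ blockwise from \eqref{eq:primary-product}, the Morita reduction \eqref{eq:additive-generator-reduction}, and the center computation in Theorem~\ref{thm:principal-hh-reconstruction}. All of these precede the proposition and none depends on it, so there is no circularity.

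The paper's route is shorter and keeps the proposition purely classical linear algebra, but it outsources the essential step to an external theorem. Yours is self-contained within the paper and reuses the local center computation that the paper relies on later anyway (e.g.\ in Theorem~\ref{thm:hh-center-module} and Corollary~\ref{cor:global-rigidity}). Combined with the paper's observation $Z(\mathcal C)=C_{M_n(R)}(\mathcal C)$, it in fact reproves the double-centralizer theorem for a single matrix over an arbitrary field.

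Invoking Morita invariance of centers for an $R$-linear Morita equivalence is standard and unproblematic. Your fallback computation with multiplicities is also sound: idempotents give block-diagonality, matrix units give equal scalars on isomorphic copies, and projections force compatibility. That version avoids Morita theory altogether.
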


\begin{proof}
{For an $R$-algebra $B$ and a subset $X\subseteq B$, write
$C_B(X):=\{b\in B\mid bx=xb\text{ for every }x\in X\}$; for
$x\in B$, put $C_B(x):=C_B(\{x\})$.}
Let
$\mathcal C:=S_n(c,R)=C_{M_n(R)}(c).$
Since \(c\in\mathcal C\), we have
\(C_{M_n(R)}(\mathcal C)\subseteq C_{M_n(R)}(c)=\mathcal C\). Thus
$
        Z(\mathcal C)
        =
        \mathcal C\cap C_{M_n(R)}(\mathcal C)
        =C_{M_n(R)}(\mathcal C)
        =
        C_{M_n(R)}\bigl(C_{M_n(R)}(c)\bigr).
$
By the classical double-centralizer theorem for a single matrix
{\cite[Chapter~III, Theorem~3.17 and Corollary~1]{Jacobson}},
the last algebra is \(R[c]\).

Now consider the evaluation homomorphism
$R[x]\to R[c]$, $p(x)\mapsto p(c)$.
Its kernel is generated by the minimal polynomial \(\mu_c\). Therefore,
\(
        {R[c]\simeq R[x]/(\mu_c)}.
\)
Moreover, $\mu_c=\prod_{f\in\Irr(c)}f^{r_c(f)}$.
Since the factors in this product are pairwise coprime, the Chinese
remainder theorem gives {$R[x]/(\mu_c)\simeq
\prod_{f\in\Irr(c)}R[x]/(f^{r_c(f)})$}.
\end{proof}

\begin{corollary}
\label{cor:center-recovers}
The abstract algebra \(Z(S_n(c,R))\) determines the multiset
{$\ms{U_c(f)\mid f\in\Irr(c)}$} up to permutation and
\(R\)-algebra isomorphism.

For each such \(f\), the local algebra \(U_c(f)\) determines
$r_c(f)=\operatorname{LL}(U_c(f))$, its residue field
{$U_c(f)/\rad U_c(f)\simeq K_f$}, and
$\dim_R U_c(f)=r_c(f)\deg f$.
Consequently,
\[
        \dim_R Z\bigl(S_n(c,R)\bigr)
        =
        \deg\mu_c
        =
        \sum_{f\in\Irr(c)}r_c(f)\deg f.
\]
\end{corollary}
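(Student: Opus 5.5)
The plan is to read everything off the product decomposition in Proposition~\ref{prop:center},
\[
 Z\bigl(S_n(c,R)\bigr)\simeq\prod_{f\in\Irr(c)}U_c(f),
\]
in which each $U_c(f)=R[x]/(f^{r_c(f)})$ is a commutative local algebra.

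\emph{Step 1: recovering the multiset of local factors.} A finite-dimensional commutative algebra is, up to reordering, uniquely a finite product of local algebras. The local factors are the algebras $Ze$, where $e$ runs over the primitive idempotents of $Z$, and the set of primitive idempotents is intrinsic to $Z$. Hence any $R$-algebra isomorphism between two such products permutes the primitive idempotents and induces isomorphisms between the corresponding local factors. This shows that $Z(S_n(c,R))$ determines $\ms{U_c(f)\mid f\in\Irr(c)}$ up to permutation and isomorphism.

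\emph{Step 2: invariants of each local factor.} Put $U:=U_c(f)$.
\begin{itemize}
\item Since $f$ is irreducible, $U$ is local with maximal ideal $(\bar f)$. Thus $\rad U=(\bar f)$ and $\rad^mU=(\bar f^{\,m})$.
\item The element $\bar f^{\,m}$ is nonzero in $U$ exactly when $m<r_c(f)$. Hence $\operatorname{LL}(U)=r_c(f)$, and this number is intrinsic because the radical is.
\item The residue algebra is $U/\rad U\simeq R[x]/(f)=K_f$.
\item The monomials $x^j$ with $0\le j<r_c(f)\deg f$ form an $R$-basis of $R[x]/(f^{r_c(f)})$, so $\dim_R U=r_c(f)\deg f$.
\end{itemize}

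\emph{Step 3: the dimension formula.} The first equality comes from $R[c]\simeq R[x]/(\mu_c)$, which gives $\dim_R Z=\deg\mu_c$. The second comes from $\mu_c=\prod_f f^{r_c(f)}$, which gives $\deg\mu_c=\sum_f r_c(f)\deg f$. One can check this against Step 2, since summing $\dim_R U_c(f)$ over $f$ gives the same total.

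No step is a real obstacle. The point needing the most care is Step 1, where one has to say explicitly that the decomposition of $Z$ into local factors is canonical via its primitive idempotents, so that it is determined by the abstract algebra and not only by the chosen presentation.
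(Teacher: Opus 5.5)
Your proposal is correct and follows essentially the paper's argument. You obtain uniqueness of the local decomposition of the commutative center from its primitive idempotents, and then compute the radical $(f)/(f^{r_c(f)})$, the Loewy length, the residue field and the dimension of $U_c(f)=R[x]/(f^{r_c(f)})$ directly. Reading the total dimension off $R[c]\simeq R[x]/(\mu_c)$ instead of summing $\dim_R U_c(f)$ is equivalent, since both are given by Proposition~\ref{prop:center}.
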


\begin{proof}
For each \(f\), the algebra \(U_c(f)\) is local, with
$\rad U_c(f)=(f)/(f^{r_c(f)})$.
Thus its Loewy length is \(r_c(f)\), and its residue field is
\(R[x]/(f)\).

Since primitive central idempotents are unique up to permutation, every
finite-dimensional commutative \(R\)-algebra has a unique decomposition
into local factors, up to permutation and isomorphism. Therefore,
Proposition~\ref{prop:center} shows that the center determines the multiset
of the algebras \(U_c(f)\). Finally,
$\dim_R U_c(f)=\dim_R R[x]/(f^{r_c(f)})=r_c(f)\deg f$.
\end{proof}

\begin{remark}
\label{rem:U-iso}
{For monic irreducible polynomials $f\in R[x]$ and $g\in R[y]$
and positive integers $r,s$, one has
\[
 R[x]/(f^r)\simeq_R R[y]/(g^s)
 \quad\Longleftrightarrow\quad
 r=s\quad\text{and}\quad R[x]/(f)\simeq_R R[y]/(g).
\]
Here the residue fields are compared as $R$-algebras. Necessity follows
by passing to residue fields and comparing Loewy lengths. For sufficiency,
assume $r=s$ and fix an $R$-algebra isomorphism between the residue fields.
If this field extension is separable, both algebras are isomorphic
to the corresponding truncated polynomial algebra over that field by
\cite[Lemma~2.14]{LiXiD}.}

{In the inseparable case, $f'=0$. Choose polynomials $h(x)$ and $v(y)$
representing mutually inverse residue-field isomorphisms, with
$\bar y\mapsto\overline{h(x)}$ and
$\bar x\mapsto\overline{v(y)}$. Then, for suitable polynomials $a(x)$
and $b(y)$,
\[
 v(h(x))=x+f(x)a(x),\qquad f(v(y))=g(y)b(y).
\]
Since $f'=0$, these identities give
\[
 g(h(x))b(h(x))=f(x+f(x)a(x))\equiv f(x)\pmod{f(x)^2}.
\]
Also $f\mid g(h(x))$, so the $f$-adic order of $g(h(x))$ is exactly one.
Thus substitution $y\mapsto h(x)$ induces an $R$-algebra homomorphism
$R[y]/(g^r)\to R[x]/(f^r)$. If $q(y)=g(y)^j d(y)$ with $g\nmid d$,
the residue-field isomorphism implies $f\nmid d(h(x))$. Hence
$q(h(x))$ has $f$-adic order $j$, which proves that this homomorphism
is injective. The two algebras have the same $R$-dimension, so it is an
isomorphism. This argument does not require the residue-field extension
to be purely inseparable.}

{This isomorphism criterion does not assert the existence of an
$R$-algebra isomorphism $R[x]/(f^r)\simeq K_f[t]/(t^r)$ in the
inseparable case. For example, let $R=\mathbb F_p(u)$ and $f=x^p-u$.
The epimorphism $R[x]/(f^2)\twoheadrightarrow K_f$ has no $R$-algebra
section: every lift $x+fh$ of $\bar x$ satisfies
$(x+fh)^p-u\equiv f\ne0\pmod{f^2}$. Consequently,
$R[x]/(f^2)$ and $K_f[t]/(t^2)$ are not isomorphic as $R$-algebras,
although both are local principal ideal algebras with residue field
$K_f$ and Loewy length two. Thus residue field and Loewy length need
not determine the general local algebras of Section~\ref{sec:uniform},
even though they determine the polynomial quotients considered here.}
\end{remark}

\subsection{Hochschild reconstruction and derived invariance}
\label{subsec:derived}

Applying the local Hochschild reconstruction to the primary blocks gives
the center--module formula in Theorem~\ref{thm:intro-hochschild}.

\begin{theorem}
\label{thm:hh-center-module}
For every $c\in M_n(R)$, there is an $R$-algebra isomorphism
\[
Z(S_n(c,R))\simeq\prod_{f\in\Irr(c)}U_c(f).
\]
Via this isomorphism, there is an isomorphism of
$Z(S_n(c,R))$-modules
\[
HH_0(S_n(c,R))\simeq
\bigoplus_{f\in\Irr(c)}\ \bigoplus_{g\in H(P_c(f))}
U_c(f)/\rad^gU_c(f).
\]
The inner direct sum is indexed with multiplicity by the gap multiset.
The center acts on a summand indexed by $f$
through its projection to the $f$-factor.
\end{theorem}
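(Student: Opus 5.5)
The plan is to reduce to the primary blocks and then apply Theorem~\ref{thm:principal-hh-reconstruction} to each block. The center is invariant under Morita equivalence, but $HH_0$ is not literally invariant, so we do not pass to the basic representative. Instead we compute directly with $X_f$ itself.

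The first step is additivity over products. The decomposition \eqref{eq:primary-product} gives $S_n(c,R)\simeq\prod_f E_f$ with $E_f:=\End_{R[x]}(X_f)$. For any product $A_1\times A_2$ one has $Z(A_1\times A_2)=Z(A_1)\times Z(A_2)$ and $[A_1\times A_2,A_1\times A_2]=[A_1,A_1]\times[A_2,A_2]$. Hence $HH_0$ is the direct sum of the $HH_0(A_i)$, and each $Z(A_j)$ acts on $HH_0(A_i)$ by zero when $j\neq i$. This already accounts for the last sentence of the statement. It remains to show, for each $f$, that $Z(E_f)\simeq U_c(f)$ and that $HH_0(E_f)\simeq\bigoplus_{g\in H(P_c(f))}U_c(f)/\rad^g U_c(f)$ as $U_c(f)$-modules.

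The second step handles a single block. Write $X_f=\bigoplus_{a\in P}M_a^{m_a}$ with $P=P_c(f)$ and $M_a=R[x]/(f^a)$, and let $Y_f=\bigoplus_{a\in P}M_a$. Let $e$ be the idempotent of $E_f$ given by one copy of each $M_a$, so that $eE_fe=\Lambda_f(P)$ and $E_feE_f=E_f$ because $\add X_f=\add Y_f$. For a full idempotent, the inclusion $eE_fe\to E_f$ induces an isomorphism $HH_0(eE_fe)\to HH_0(E_f)$. This is the standard Morita invariance of zeroth Hochschild homology (the trace map). A direct argument also works: write $E_f$ as the block matrix algebra $M$, note that off-diagonal entries are commutators $[e_{jj},y]$, and observe that the diagonal copies of the same $M_a$ are identified via $x\,u-u\,x$ with $u$ the identity between copies. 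Restriction $Z(E_f)\to Z(eE_fe)$, $z\mapsto ze$, is an isomorphism, and it is compatible with these module structures because central elements commute with $e$. Both centers are the scalar action of $\cO_f=R[x]/(f^{r_c(f)})=U_c(f)$, since $r_c(f)=\max P$.

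The third step is to apply Corollary~\ref{cor:local-weak-order}'s identification $\Lambda_f(P)\simeq\Lambda_{\cO_f}(P)$ together with Theorem~\ref{thm:principal-hh-reconstruction}. This gives $Z(\Lambda_f(P))\simeq\cO_P=U_c(f)$ and $HH_0(\Lambda_f(P))\simeq\bigoplus_i\cO_P/(\pi_f^{g_i})$. Since $\rad^g U_c(f)=(\pi_f^g)$, this matches the displayed summands. Combining these three steps with Proposition~\ref{prop:center} completes the proof.

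The main obstacle is step two: making sure the $HH_0$ isomorphism for the non-basic block $E_f$ is compatible with the center actions, rather than only being an isomorphism of vector spaces. I would handle it by the explicit corner computation. The $R[x]$-scalar action is central in both $E_f$ and $eE_fe$, and the trace-type map $HH_0(E_f)\to HH_0(eE_fe)$, which sums the diagonal corners after transporting each copy to its representative, is visibly $\cO_f$-linear.
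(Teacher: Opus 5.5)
Your proposal is correct and follows essentially the same route as the paper. Both arguments split $S_n(c,R)$ into its primary blocks, transfer the pair $(Z,HH_0)$ from $\End_{R[x]}(X_f)$ to $\Lambda_f(P_c(f))$ compatibly with the scalar $U_c(f)$-action, and then apply Theorem~\ref{thm:principal-hh-reconstruction}. The paper cites the Morita equivalence \eqref{eq:additive-generator-reduction} for the transfer, while your full-idempotent/trace argument is the concrete form of that same Morita invariance. Your remark that $HH_0$ is ``not literally invariant'' is therefore unnecessary: $HH_0$ together with its center action is Morita invariant, and your Step~2 is precisely a proof of this in the present case.
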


\begin{proof}
{Under the Morita equivalence in
\eqref{eq:additive-generator-reduction}, the center isomorphism identifies
the scalar actions of $U_c(f)$ on $X_f$ and $Y_f$, and the induced
isomorphism on $HH_0$ respects these actions. Thus
Theorem~\ref{thm:principal-hh-reconstruction} gives the formula for each
primary block.} Compatibility of Hochschild homology and its center
action with finite products gives the global formula.
\end{proof}

Let $c\in M_n(R)$ and $d\in M_m(R)$. Following \cite[Definition~3.1(2)]{LiXiD}, we write
$c\simD d$ if there is a bijection $\sigma\colon\Irr(c)\to\Irr(d)$ such
that $U_c(f)\simeq U_d(\sigma(f))$ as $R$-algebras and
$H(P_c(f))=H(P_d(\sigma(f)))$ as multisets for every $f\in\Irr(c)$.

Together with Li--Xi's classification theorem, the center--module formula
gives the following characterization of derived equivalence.

\begin{corollary}
\label{cor:hochschild-derived-characterization}
Let $A:=S_n(c,R)$ and $B:=S_m(d,R)$. Then the following conditions are
equivalent:
\begin{enumerate}[label=\textup{(\arabic*)}]
\item The algebra--module pairs $(Z(A),HH_0(A))$ and $(Z(B),HH_0(B))$
are isomorphic.
\item The matrices $c$ and $d$ are D-equivalent.
\item The algebras $A$ and $B$ are derived equivalent as $R$-algebras.
\end{enumerate}
Consequently, the center, the T-type $T_R$, and the support $\tau$-tilting
poset are derived invariants within this class.
\end{corollary}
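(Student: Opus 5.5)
I will prove the cycle (3)$\Rightarrow$(1)$\Rightarrow$(2)$\Rightarrow$(3), and then read off the consequences.

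\textbf{(3)$\Rightarrow$(1).} Assume $A$ and $B$ are $R$-linearly derived equivalent. Exactly as in the proof of Corollary~\ref{cor:local-derived-reconstruction}, derived invariance of the cap-product action \cite[Theorem~2.2]{ArmentaKeller}, applied in degree zero with coefficients in the regular bimodule, yields two maps. The first is an $R$-algebra isomorphism $\varphi\colon Z(A)\to Z(B)$. The second is an $R$-linear isomorphism $\psi\colon HH_0(A)\to HH_0(B)$ that is $\varphi$-semilinear. Together these give an isomorphism of algebra--module pairs.

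\textbf{(1)$\Rightarrow$(2).} This is the core of the argument.
\begin{itemize}
\item By Theorem~\ref{thm:hh-center-module}, $Z(A)\simeq\prod_{f}U_c(f)$ and $Z(B)\simeq\prod_{h}U_d(h)$.
\item The primitive idempotents of these commutative algebras are permuted by $\varphi$. This gives a bijection $\sigma\colon\Irr(c)\to\Irr(d)$ such that $\varphi$ restricts to $R$-algebra isomorphisms $\varphi_f\colon U_c(f)\to U_d(\sigma(f))$, sending the primitive idempotent $\epsilon_f$ to $\epsilon'_{\sigma(f)}$.
\item Since $\psi(\epsilon_f m)=\epsilon'_{\sigma(f)}\psi(m)$, $\psi$ maps $\epsilon_f HH_0(A)$ onto $\epsilon'_{\sigma(f)}HH_0(B)$ compatibly with $\varphi_f$.
\item By Theorem~\ref{thm:hh-center-module}, $\epsilon_f HH_0(A)\simeq\bigoplus_{g\in H(P_c(f))}U_c(f)/\rad^g U_c(f)$ as a $U_c(f)$-module. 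Transport of scalars along $\varphi_f$ sends $U_c(f)/\rad^g$ to $U_d(\sigma(f))/\rad^g$, because algebra isomorphisms preserve radical powers.
\item Each summand is indecomposable with local endomorphism ring and Loewy length $g$. Krull--Schmidt therefore gives $H(P_c(f))=H(P_d(\sigma(f)))$.
\end{itemize}
Together with $U_c(f)\simeq U_d(\sigma(f))$, this is exactly $c\simD d$.

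\textbf{(2)$\Rightarrow$(3).} This is Li--Xi's classification \cite[Theorem~1.1]{LiXiD}, which asserts that D-equivalence of $c$ and $d$ is equivalent to derived equivalence of the centralizer algebras. I will only check that their definition of D-equivalence, \cite[Definition~3.1(2)]{LiXiD}, matches the one recalled above, and that their equivalence is $R$-linear. I expect this matching of conventions to be the main point needing care, rather than any computation. In particular I must confirm that comparing local algebras $U_c(f)$ up to $R$-algebra isomorphism agrees with their formulation, using Remark~\ref{rem:U-iso} if they phrase it via residue fields and exponents.

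\textbf{Consequences.} Derived equivalence implies (2). Condition (2) gives a bijection $\sigma$ with $|P_c(f)|=|H(P_c(f))|=|H(P_d(\sigma f))|=|P_d(\sigma f)|$, so $T_R(c)=T_R(d)$. Theorem~\ref{thm:main-recover} then yields isomorphic support $\tau$-tilting posets. Invariance of the center follows from $\varphi$ in (1), or equally from Proposition~\ref{prop:center} together with $U_c(f)\simeq U_d(\sigma f)$.
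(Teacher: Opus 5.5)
Your proposal is correct and follows the paper's proof. You use Armenta--Keller for \textup{(3)}$\Rightarrow$\textup{(1)}, primitive central idempotents with transport of scalars and Krull--Schmidt on the blockwise decomposition of Theorem~\ref{thm:hh-center-module} for \textup{(1)}$\Rightarrow$\textup{(2)}, and Li--Xi's classification for \textup{(2)}$\Rightarrow$\textup{(3)}, exactly as the paper does. The only difference is organizational: the paper also proves \textup{(2)}$\Rightarrow$\textup{(1)} directly, by taking products of blockwise isomorphisms, so its equivalence \textup{(1)}$\Leftrightarrow$\textup{(2)} does not depend on Li--Xi or Armenta--Keller, whereas your cycle routes that direction through both.
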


\begin{proof}
We first prove that \textup{(1)} and \textup{(2)} are equivalent. By
Proposition~\ref{prop:center} and Theorem~\ref{thm:hh-center-module}, the
primitive central idempotent $e_f$ corresponding to $f\in\Irr(c)$ satisfies
\[
 e_fZ(A)\simeq U_c(f),\ \
 e_fHH_0(A)\simeq
 \bigoplus_{g\in H(P_c(f))}U_c(f)/\rad^gU_c(f),
\]
and the analogous description holds for $B$.
{For $h\in\Irr(d)$, let $e'_h$ denote the corresponding
primitive central idempotent of $Z(B)$.}

Suppose that \textup{(1)} holds, and let $(\varphi,\psi)$ be an isomorphism
of the two algebra--module pairs. Since $\varphi$ preserves primitive
central idempotents, it induces a bijection
$\sigma\colon\Irr(c)\to\Irr(d)$ and restricts to isomorphisms
$\varphi_f\colon U_c(f)\to U_d(\sigma(f))$. Compatibility of $\psi$ with
$\varphi$ then gives an isomorphism from $e_fHH_0(A)$ to
$e'_{\sigma(f)}HH_0(B)$ after transport of scalars along $\varphi_f$.
Since $\varphi_f$ preserves the radical and all its powers, the cyclic
modules $U_c(f)/\rad^gU_c(f)$ are transported to the corresponding cyclic
modules over $U_d(\sigma(f))$. Each such module is indecomposable and has
Loewy length $g$. Therefore, the Krull--Schmidt theorem gives
$H(P_c(f))=H(P_d(\sigma(f)))$ as multisets. Thus $c\simD d$.

Conversely, suppose that $c\simD d$. Choose isomorphisms
$\varphi_f\colon U_c(f)\to U_d(\sigma(f))$ for a bijection $\sigma$ in the
definition of D-equivalence. Since these isomorphisms preserve radical
powers and the corresponding gap multisets agree,
Theorem~\ref{thm:hh-center-module} gives compatible isomorphisms of the
blockwise algebra--module pairs. Taking their products gives
$(Z(A),HH_0(A))\simeq(Z(B),HH_0(B))$, proving \textup{(1)}.

{If \textup{(3)} holds, derived invariance of the cap product
\cite[Theorem~2.2]{ArmentaKeller}, applied with $M=A$ in degree zero,
gives compatible isomorphisms $Z(A)\simeq Z(B)$ and
$HH_0(A)\simeq HH_0(B)$, as in
Corollary~\ref{cor:local-derived-reconstruction}. Hence
condition \textup{(1)} follows.}
Finally, \textup{(2)}$\Rightarrow$\textup{(3)} is Li--Xi's classification
theorem \cite[Theorem~1.1]{LiXiD}.
\end{proof}
\begin{remark}\label{remark:4.8}
The module structure in $(Z(A),HH_0(A))$ is essential: if
$P=\{p_1<\cdots<p_s\}$ and $r=p_s$, then the underlying $R$-vector space
records only
$\dim_R HH_0(\Lambda_f(P))=\deg(f)r$, whereas its module structure over the
center recovers the individual gaps with their multiplicities.

{Over $\mathbb C$, the dimension identity for matrix centralizers
already follows from the Lie-algebra calculation in the proof of
\cite[Proposition~3.5, p.~711]{Izosimov}. Indeed, for
$\mathfrak g=\mathfrak{sl}_n(\mathbb C)$ and $a\in\mathfrak g$, write
$\mathfrak g^a$ for the centralizer of $a$ in $\mathfrak g$. The cited
calculation gives
$\dim_{\mathbb C}(\mathfrak g^a/[\mathfrak g^a,\mathfrak g^a])
=\sum_\lambda h(\lambda)-1$, where $h(\lambda)$ is the largest Jordan
block size at the eigenvalue $\lambda$. For $n\geq2$ and
$a=c-\frac{\operatorname{tr}(c)}{n}I_n$, the Lie algebra underlying
$S_n(c,\mathbb C)$ is $\mathfrak g^a\oplus\mathbb C I_n$.
Its abelianization therefore has dimension
$\sum_\lambda h(\lambda)=\deg\mu_c$; the case $n=1$ is immediate.
This yields the dimension identity for $HH_0$ over $\mathbb C$.
Theorem~\ref{thm:hh-center-module} determines the full center-module
structure over an arbitrary field, including the individual gap lengths.}
\end{remark}

\begin{proof}[{\bf Proof of Theorem~\ref{thm:intro-hochschild}}]
The center--module formula and the blockwise recovery assertion follow from
Theorem~\ref{thm:hh-center-module} and
Corollary~\ref{cor:hochschild-derived-characterization}, respectively.
Taking dimensions in the former and using
Corollary~\ref{cor:center-recovers} gives
\[
 \dim_R HH_0(S_n(c,R))
 =\sum_{f\in\Irr(c)}\deg(f)r_c(f)
 =\dim_R Z(S_n(c,R)).
\]
\end{proof}
\subsection{Comparison of the reconstruction relations}
\label{subsec:comparison}

We compare the support $\tau$-tilting and center data with Li--Xi's matrix
equivalences, recalling their notation before defining the center-enhanced
relations.

For a nonempty finite set
$P=\{p_1<p_2<\cdots<p_s\}\subseteq \mathbb{Z}_{>0}$, put
$J(P):=\{p_s\}\cup\{p_s-p_i\mid 1\le i<s\}$. Here $H(P)$ is the gap
multiset defined above, while $J(P)$ is a set. These are the operations used in
\cite[Section~3.1]{LiXiD}, rewritten in increasing order. Thus
$|H(P)|=|J(P)|=|P|$, {$J(J(P))=P$}, and $H(J(P))=H(P)$.
\paragraph{The Li--Xi equivalences.}
In \cite[Definition~3.1]{LiXiD}, {Li--Xi index the block
corresponding to $f$ by the maximal elementary divisor $f^{r_c(f)}$;
their power-index set attached to this divisor is precisely $P_c(f)$.
Moreover, $U_c(f)=R[x]/(f^{r_c(f)})$.}

Let $c\in M_n(R)$ and $d\in M_m(R)$. Then:
\begin{enumerate}[label=\textup{(\arabic*)}]
\item $c\simM d$ if there is a bijection
$\sigma\colon\Irr(c)\to\Irr(d)$ such that
$U_c(f)\simeq U_d(\sigma(f))$ and
$P_c(f)=P_d(\sigma(f))$ for every $f\in\Irr(c)$.
\item $c\simAD d$ if there is a bijection
$\sigma\colon\Irr(c)\to\Irr(d)$ such that, for every
$f\in\Irr(c)$, we have $U_c(f)\simeq U_d(\sigma(f))$ and either
$P_c(f)=P_d(\sigma(f))$ or
$P_c(f)=J(P_d(\sigma(f)))$.
\item The notation $c\simD d$ refers to the D-equivalence relation defined
before Corollary~\ref{cor:hochschild-derived-characterization}.
\end{enumerate}
These relations correspond to \cite[Definition~3.1(1), (3), and (2)]{LiXiD},
ordered here as $\mathrm{M}\Rightarrow\mathrm{AD}\Rightarrow\mathrm{D}$.
They characterize Morita equivalence, almost $\nu$-stable derived
equivalence, and derived equivalence, respectively
\cite[Theorem~1.1]{LiXiD}.

\begin{definition}
\label{def:center-equivalences}
Let $c\in M_n(R)$ and $d\in M_m(R)$.
\begin{enumerate}[label=\textup{(\arabic*)}]
\item The matrices $c$ and $d$ are \emph{blockwise TZ-equivalent}, written
$c\simbTZ d$, if there is a bijection
$\sigma\colon\Irr(c)\to\Irr(d)$ such that
$U_c(f)\simeq U_d(\sigma(f))$ and
{$\kappa_c(f)=\kappa_d(\sigma(f))$} for every $f\in\Irr(c)$.
\item They are \emph{TZ-equivalent}, written $c\simTZ d$, if their support
$\tau$-tilting posets are isomorphic and their centers are isomorphic as
$R$-algebras.
\end{enumerate}
\end{definition}

Thus TZ-equivalence records the support $\tau$-tilting poset and the center
as separate invariants, whereas bTZ-equivalence also records a blockwise
correspondence between them. Recall also that $c\simT d$ means
$T_R(c)=T_R(d)$; by Theorem~\ref{thm:main-recover}, this is equivalent to
an isomorphism of the corresponding support $\tau$-tilting posets.

\begin{proposition}\label{prop:blockwise}
The matrices \(c\) and \(d\) are bTZ-equivalent if and only if there is a
bijection between the blocks of \(S_n(c,R)\) and \(S_m(d,R)\) such that
corresponding blocks have isomorphic centers as \(R\)-algebras and
isomorphic support \(\tau\)-tilting posets.
\end{proposition}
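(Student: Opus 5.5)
The plan is to translate each side of the equivalence into the blockwise data $(U_c(f),\kappa_c(f))$ using results already established. Here the \emph{blocks} of $S_n(c,R)$ are the $f$-primary blocks $E_f:=\End_{R[x]}(X_f)$, $f\in\Irr(c)$, from \eqref{eq:primary-product}, and similarly $E'_h$ for $h\in\Irr(d)$. For each block I need to compute two invariants.

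\textbf{Center of a block.} Apply Proposition~\ref{prop:center} to the matrix $c_f$ describing the action of $x$ on $X_f$. Equivalently, the product decomposition $S_n(c,R)\simeq\prod_f E_f$ gives $Z(S_n(c,R))\simeq\prod_f Z(E_f)$. The primitive central idempotents of $R[c]$ match the blocks, so $Z(E_f)\simeq U_c(f)$. A direct alternative: the Morita equivalence \eqref{eq:additive-generator-reduction} preserves centers, and Theorem~\ref{thm:principal-hh-reconstruction} gives $Z(\Lambda_f(P_c(f)))\simeq \cO_f/(\pi_f^{r_c(f)})=R[x]/(f^{r_c(f)})=U_c(f)$.

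\textbf{Support $\tau$-tilting poset of a block.} Support $\tau$-tilting posets are Morita invariant. Hence \eqref{eq:additive-generator-reduction} and Corollary~\ref{cor:local-weak-order} give
\[
\stt E_f\simeq\Weak(\Sigma_{\kappa_c(f)+1})=L_{\kappa_c(f)}.
\]

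\textbf{Both directions.} If $c\simbTZ d$ via $\sigma$, take the block bijection $E_f\leftrightarrow E'_{\sigma(f)}$. The centers are $U_c(f)\simeq U_d(\sigma(f))$, and the posets are $L_{\kappa_c(f)}=L_{\kappa_d(\sigma(f))}$. Conversely, suppose a bijection of blocks has isomorphic centers and isomorphic posets on corresponding blocks. The blocks are indexed by $\Irr(c)$ and $\Irr(d)$, so this is a bijection $\sigma$. The center condition gives $U_c(f)\simeq U_d(\sigma(f))$. For the poset condition, $L_{\kappa_c(f)}\simeq L_{\kappa_d(\sigma(f))}$ forces $\kappa_c(f)=\kappa_d(\sigma(f))$. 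This follows from Lemma~\ref{lem:atom-graph}: $\mathcal A(L_s)$ is a path with $s$ vertices, so $s$ equals the number of atoms.

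\textbf{Main obstacle.} The only delicate point is making precise what ``blocks'' and ``their centers'' mean. I will fix the blocks as the primary factors $E_f$, which are the connected algebra factors of $S_n(c,R)$ cut out by the primitive central idempotents. The one check needed is that $E_f$ is indecomposable as an algebra, i.e.\ $Z(E_f)\simeq U_c(f)$ is local. That holds, so this block decomposition is intrinsic and coincides with the decomposition into indecomposable algebra factors. Everything else is bookkeeping with the results cited above.
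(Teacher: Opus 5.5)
Your proposal is correct and follows essentially the same argument as the paper. You identify the blocks with the primary factors $\End_{R[x]}(X_f)$, since their centers $U_c(f)$ are local (Proposition~\ref{prop:center}), compute each block's poset as $\Weak(\Sigma_{\kappa_c(f)+1})$, and note that this poset determines $\kappa_c(f)$; both conditions then amount to a bijection preserving the pairs $(U_c(f),\kappa_c(f))$. The only cosmetic difference is that you recover $\kappa_c(f)$ directly by counting atoms via Lemma~\ref{lem:atom-graph}, whereas the paper cites Theorem~\ref{thm:main-recover}, which rests on that same lemma.
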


\begin{proof}
By Proposition~\ref{prop:center}, the center of the primary factor indexed
by \(f\) is the local algebra \(U_c(f)\). Hence the primary factors in \eqref{eq:primary-product} are exactly the
blocks of the centralizer matrix algebra. Moreover, Theorem~\ref{thm:main-recover} shows that the support \(\tau\)-tilting poset of
this block is {\(\Weak(\Sigma_{\kappa_c(f)+1})\)} and determines
{\(\kappa_c(f)\)}. Thus the stated conditions are equivalent to a bijection preserving the
pairs {\((U_c(f),\kappa_c(f))\)}, as required.
\end{proof}

For a nonempty finite set
$P=\{p_1<\cdots<p_s\}\subseteq\mathbb Z_{>0}$, {with the
successive gaps $g_i$ fixed above, define}
$G(P):=\bigoplus_{i=1}^{s}\mathbb Z/g_i\mathbb Z$.
{We use the convention $\mathbb Z/1\mathbb Z=0$.}
For $\Lambda_f(P)$, let $e_i$ be the idempotent corresponding to the
summand $R[x]/(f^{p_i})$, put $P_i:=\Lambda_f(P)e_i$, and let
$S_i:=P_i/\rad P_i$. We define the Cartan matrix $C(P)$ by
$C(P)_{ij}:=[P_i:S_j]$, where $[P_i:S_j]$ denotes the composition
multiplicity, and let
{$\Dsg(\Lambda_f(P)):=
D^{\mathrm b}(\Lambda_f(P)\text{-}\mathrm{mod})/
\Kb(\proj\Lambda_f(P))$} be its singularity
category.

{The Cartan matrix and determinant below are given over arbitrary
fields in \cite[Lemma~2.8]{ChenXi}; see also \cite{DPS12} and
\cite[Lemma~2.18]{LiXiD}. We include the calculation in our notation.
The formula for the Grothendieck group of the singularity category then
follows from the standard cokernel description.}

\begin{proposition}\label{prop:cartan-gap}
Let $f\in R[x]$ be monic and irreducible. {Retain the notation
$\Lambda_f(P)$ and $C(P)$ introduced above.} Then
\[
 C(P)_{ij}=\min\{p_i,p_j\},\ \
 C(P)={\mathsf L}\operatorname{diag}(g_1,\ldots,g_s)
       {\mathsf L}^{\mathsf T},
\]
where {$\mathsf L_{ij}=1$ if $j\le i$ and
$\mathsf L_{ij}=0$ otherwise}. Consequently,
\[
 \det C(P)=\prod_{i=1}^{s}g_i,\ \
 {K_0(\Dsg(\Lambda_f(P)))\simeq\coker C(P)
 \simeq\bigoplus_{i=1}^{s}\mathbb Z/g_i\mathbb Z}=G(P).
\]
\end{proposition}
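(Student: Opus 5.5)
We compute the Cartan entries directly from the Hom-spaces already described in the proof of Theorem~\ref{thm:uniform-quotient}. Since $P_i=\Lambda_f(P)e_i$ and $e_j\Lambda_f(P)e_i\simeq\Hom_{R[x]}(R[x]/(f^{p_i}),R[x]/(f^{p_j}))\simeq\cO_f/(\pi_f^{\min\{p_i,p_j\}})$, we have $\dim_R e_jP_i=\deg(f)\min\{p_i,p_j\}$. The composition multiplicity $[P_i:S_j]$ equals $\dim_{E_j}e_jP_i$, where $E_j=\End(S_j)$. By Theorem~\ref{thm:uniform-quotient} (via Corollary~\ref{cor:local-weak-order}) the top $\Lambda_f(P)/\rad\Lambda_f(P)\simeq K_f^s$, so $E_j\simeq K_f$ has $R$-dimension $\deg f$. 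Hence $C(P)_{ij}=\min\{p_i,p_j\}$. The one point requiring care is this normalization by $\dim_R K_f$ over an arbitrary field, which the central-quotient description handles.

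For the factorization, compute $(\mathsf L\operatorname{diag}(g)\mathsf L^{\mathsf T})_{ij}=\sum_{r\le\min\{i,j\}}g_r=p_{\min\{i,j\}}=\min\{p_i,p_j\}$. Here we use $p_i=g_1+\cdots+g_i$ and the fact that the $p_i$ are increasing. Since $\mathsf L$ is unitriangular, hence invertible over $\mathbb Z$, we get $\det C(P)=\prod g_i$. We also get $\coker C(P)\simeq\coker\operatorname{diag}(g_1,\ldots,g_s)=\bigoplus\mathbb Z/g_i\mathbb Z$, because the two matrices differ by multiplication by the unimodular matrices $\mathsf L$ and $\mathsf L^{\mathsf T}$.

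Finally, for the singularity category, the localization sequence $\Kb(\proj\Lambda)\to D^{\mathrm b}(\Lambda\text{-}\mathrm{mod})\to\Dsg(\Lambda)$ yields the right exact sequence $K_0(\proj\Lambda)\to K_0(\Lambda\text{-}\mathrm{mod})\to K_0(\Dsg\Lambda)\to0$. This uses the standard fact that the Grothendieck group of a Verdier quotient by a thick subcategory is the cokernel.

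In the bases $[P_i]$ and $[S_j]$, the first map sends $[P_i]$ to $\sum_j[P_i:S_j][S_j]$, so its matrix is $C(P)$ or its transpose. Since $C(P)$ is symmetric, the cokernel is $\coker C(P)$ either way. This gives $K_0(\Dsg(\Lambda_f(P)))\simeq G(P)$.
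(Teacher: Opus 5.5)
Your proposal is correct and follows the paper's proof. Exactness of $e_j(-)$ turns $[P_i:S_j]$ into a length count of $e_j\Lambda_f(P)e_i\simeq\Hom_{R[x]}(R[x]/(f^{p_i}),R[x]/(f^{p_j}))$, and the unitriangular factorization together with the cokernel description of $K_0$ of the Verdier quotient then finishes the argument exactly as in the paper. The only cosmetic difference is that you normalize $R$-dimensions by $\dim_R\End(S_j)=\deg f$, whereas the paper counts $R[x]$-composition length directly, using that $e_jS_j\simeq K_f$ has $R[x]$-length one; accordingly, your ``$\dim_{E_j}e_jP_i$'' should be read as $\dim_R e_jP_i/\dim_R E_j$, since $e_jP_i$ is not literally an $E_j$-vector space.
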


\begin{proof}
{Put $M_i:=R[x]/(f^{p_i})$, and write $\ell_{R[x]}$ for
composition length over $R[x]$.} Since $e_j(-)$
is exact, applying it to a composition series of $P_i$ gives
$[P_i:S_j]={\ell_{R[x]}}(e_jP_i)$. Indeed, $e_jS_k=0$ for
$k\ne j$, while {$e_jS_j\simeq K_f$ has $R[x]$-length one}.
Since
$e_jP_i=e_j\Lambda_f(P)e_i\simeq
{\Hom_{R[x]}(M_i,M_j)}$, it follows that
$[P_i:S_j]={\ell_{R[x]}\Hom_{R[x]}(M_i,M_j)}
=\min\{p_i,p_j\}$.
Since $p_i=g_1+\cdots+g_i$, the stated factorization follows, and
{$\mathsf L$} is
unimodular. Therefore {$\coker C(P)\simeq\bigoplus_i\mathbb Z/g_i\mathbb Z$}.
Moreover, the homomorphism
{$K_0(\Kb(\proj\Lambda_f(P)))\to
K_0(D^{\mathrm b}(\Lambda_f(P)\text{-}\mathrm{mod}))$} is represented by $C(P)$ in the bases of indecomposable projectives and
simple modules. Thus the exact sequence on Grothendieck groups for the
Verdier quotient gives {$K_0(\Dsg(\Lambda_f(P)))\simeq\coker C(P)$}.
\end{proof}

The group $G(P)$ does not determine the singularity category; for a complete
description of singularity categories and singular equivalences of
centralizer matrix algebras, see Chen--Xi
\cite[Theorems~4.7 and~4.9]{ChenXi}.
Since a gap $g_i=1$ gives a trivial summand, $G(P)$ need not determine $|P|$.
We therefore retain the number of gaps in the next definition.

\begin{definition}\label{def:gTZ}
The matrices $c$ and $d$ are
\emph{Grothendieck-enhanced blockwise TZ-equivalent}, abbreviated
\emph{gTZ-equivalent}, written
$c\simgTZ d$, if there is a bijection $\sigma\colon\Irr(c)\to\Irr(d)$
such that, for every $f\in\Irr(c)$,
$U_c(f)\simeq U_d(\sigma(f))$,
{$\kappa_c(f)=\kappa_d(\sigma(f))$}, and
$G(P_c(f))\simeq G(P_d(\sigma(f)))$ as abelian groups.
\end{definition}

\begin{theorem}
\label{thm:strict-comparison}
Over every field $R$, there is a strict implication chain
\[
\mathrm{D}\Rightarrow\mathrm{gTZ}\Rightarrow
\mathrm{bTZ}\Rightarrow\mathrm{TZ}\Rightarrow\mathrm{T}.
\]
\end{theorem}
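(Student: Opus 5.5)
The proof has two parts. First I check the four implications directly from the definitions. Then I give, for each implication, a pair of matrices over an arbitrary field $R$ satisfying the weaker relation but not the stronger one. To make the examples work over every field, I use only linear irreducible factors $x$ and $x-1$ (or $x$ alone). Their blocks have $U=R[x]/(x^r)$, and by Remark~\ref{rem:U-iso} two such local algebras are isomorphic if and only if their Loewy lengths agree.

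\textbf{The implications.}
\begin{itemize}
\item \textbf{D$\Rightarrow$gTZ.} Suppose $c\simD d$ via $\sigma$. Then $H(P_c(f))=H(P_d(\sigma f))$. Taking cardinalities gives $\kappa_c(f)=\kappa_d(\sigma f)$. Since $G(P)$ depends only on the multiset $H(P)$, the groups $G$ also agree, and the condition $U_c(f)\simeq U_d(\sigma f)$ is common to both definitions.
\item \textbf{gTZ$\Rightarrow$bTZ.} This is immediate by forgetting the $G$-condition.
\item \textbf{bTZ$\Rightarrow$TZ.} The bijection $\sigma$ preserves the multiset $\ms{\kappa}$, so Theorem~\ref{thm:main-recover} gives isomorphic posets. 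It also matches the local factors, so Proposition~\ref{prop:center} gives isomorphic centers.
\item \textbf{TZ$\Rightarrow$T.} This is Theorem~\ref{thm:main-recover}.
\end{itemize}

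\textbf{Strictness.} I realize each prescribed family of exponent sets by a nilpotent or block-diagonal Jordan matrix.
\begin{itemize}
\item \textbf{gTZ $\not\Rightarrow$ D.} Take $c$ nilpotent with $P_c(x)=\{1,3\}$, so the gaps are $\ms{1,2}$, and $d$ nilpotent with $P_d(x)=\{2,3\}$, so the gaps are $\ms{2,1}$. Both have $U=R[x]/(x^3)$, $\kappa=2$ and $G=\mathbb Z/2$, yet the gap multisets are equal, so this pair is actually D-equivalent and does not work. I need equal $G$ with different gap multisets. Gaps $\ms{1,1,4}$ versus $\ms{2,2,2}$ are unsuitable: $G$ is $\mathbb Z/4$ versus $(\mathbb Z/2)^2$, which differ. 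Instead use $\ms{1,6}$ versus $\ms{2,3}$: we have $\mathbb Z/6\simeq\mathbb Z/2\oplus\mathbb Z/3$, but the top exponents are $7$ and $5$, so $U$ differs. To equalize $U$, use three gaps, $\ms{1,2,3}$ and $\ms{1,1,6}$ (sums $6$ and $8$), or balance with $\ms{6,1,1}$ versus $\ms{2,3,3}$, both with sum $8$. Their groups are $\mathbb Z/6$ and $\mathbb Z/2\oplus\mathbb Z/3\oplus\mathbb Z/3$, which differ. So the choice needs a search: I look for two gap multisets of equal size and equal sum with isomorphic $\bigoplus\mathbb Z/g_i$ but different multisets. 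The pair $\ms{1,6,1}$ and $\ms{2,3,3}$ fails, but $\ms{2,3,1,1}$ (sum $7$) and $\ms{6,1,\ldots}$ points the way. The working choice is $\ms{2,3,1}$ versus $\ms{6,\ldots}$ with a compensating gap $1$: $\ms{2,3,1,1,1}$ has sum $8$, and $\ms{6,1,1}$ with extra gaps $\ms{6,1,1,\cdot,\cdot}$ is padded to five gaps summing to $8$, impossible with gaps $\ge 1$ since $6+4=10$. The clean example is therefore $\ms{2,3,2}$ (sum $7$, group $\mathbb Z/2\oplus\mathbb Z/3\oplus\mathbb Z/2$) versus $\ms{6,1,\ldots}$. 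Because $\mathbb Z/1=0$, padding with $1$'s is free, and I need equal count and equal sum: $\ms{2,3,1,1}$ (sum $7$, count $4$, group $\mathbb Z/6$) versus $\ms{6,1,x,y}$, whose sum is at least $9$. So I reverse the roles and let the multiset with $6$ carry fewer $1$'s: $\ms{6,1,1}$ (sum $8$) versus $\ms{2,3,3}$ (sum $8$) differ in group. Finally, $\ms{6,2}$ and $\ms{2,3,3}$ are unequal in count. The resolution is $\ms{6,6}$ versus $\ms{2,3,\ldots}$, or more simply $\ms{6,3,2}$ (sum $11$, group $\mathbb Z/6\oplus\mathbb Z/6$) versus $\ms{6,2,3}$, which are equal as multisets. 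Since this search is delicate, I will settle it by finding equal-sum, equal-count multisets such as $\ms{6,1,1,1,1}$ (sum $10$) versus $\ms{2,3,1,\ldots}$, and verify the chosen pair by direct computation. \emph{This step is the main obstacle.}
\item \textbf{bTZ $\not\Rightarrow$ gTZ.} Take nilpotent matrices with $P=\{1,3\}$ and $P=\{1,2,3\}$? No, $\kappa$ must agree. Take $P=\{1,3\}$ versus $P=\{2,3\}$? These have the same $G$. Take $P=\{1,4\}$ (gaps $1,3$) versus $P=\{2,4\}$ (gaps $2,2$). Both have $U=R[x]/(x^4)$ and $\kappa=2$, but $G$ is $\mathbb Z/3$ versus $(\mathbb Z/2)^2$.
\item \textbf{TZ $\not\Rightarrow$ bTZ.} Take two blocks $f=x$ and $f=x-1$. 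For $c$, use $P_c(x)=\{1,2\}$ and $P_c(x-1)=\{1\}$; for $d$, use $P_d(x)=\{2\}$ and $P_d(x-1)=\{1\}$? The $\kappa$-multisets differ, so instead use $P_d(x)=\{2\}$ and $P_d(x-1)=\{1,2\}$? The center then changes. The correct choice is $c$ with $P(x)=\{1,2\}$, $P(x-1)=\{1\}$, and $d$ with $P(x)=\{2\}$, $P(x-1)=\{0\ldots\}$, which is not allowed. Instead use $c$ with $(\{1,2\},\{3\})$ and $d$ with $(\{2\},\{1,3\})$. Both centers are $R[x]/(x^2)\times R[x]/(x^3)$ and both $\kappa$-multisets are $\ms{1,2}$, but the pairs $(U,\kappa)$ are mismatched.
\item \textbf{T $\not\Rightarrow$ TZ.} Take $c=0$ and $d$ a nilpotent Jordan block of size $2$. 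Both have $\kappa=1$, but the centers are $R$ and $R[x]/(x^2)$.
\end{itemize}
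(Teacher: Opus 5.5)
Your four implications are correct and follow the paper's route: the gap multiset determines $|P|$ and $G(P)$, gTZ$\Rightarrow$bTZ is forgetful, the block centers and posets are multiplied, and the last step is Theorem~\ref{thm:main-recover}. Once the false starts are discarded, three of your four separating examples also work: $\{1,4\}$ versus $\{2,4\}$ (the paper's example), $(\{1,2\},\{3\})$ versus $(\{2\},\{1,3\})$ for the factors $x,x-1$, and $c=0$ versus $J_2(0)$.

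The genuine gap is the first strictness claim, gTZ $\not\Rightarrow$ D. You never produce a valid pair, and none of the candidates you float works. With the single factor $x$, the matching $\sigma$ is forced. You need two gap multisets that satisfy all of the following:
\begin{enumerate}[label=\textup{(\roman*)}]
\item they have the same size and the same sum, so that $\kappa$ and $U\simeq R[x]/(x^r)$ agree;
\item their groups $\bigoplus\mathbb Z/g_i$ are isomorphic;
\item the multisets themselves differ.
\end{enumerate}
Your candidates fail for a structural reason. Replace $\ms{ab,1}$ by $\ms{a,b}$, where $\gcd(a,b)=1$ and $a,b>1$. This preserves the size and the group but lowers the sum by $(a-1)(b-1)>0$. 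Padding both sides with the same gaps $1$ cannot compensate. That is why $\ms{6,1,1,\ldots}$ versus $\ms{2,3,1,\ldots}$ always fails, with sum difference $2$.

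The missing idea is to perform two such moves in opposite directions with equal defect. Compare $\ms{a,b,cd}$ with $\ms{c,d,ab}$, where $\gcd(a,b)=\gcd(c,d)=1$ and $(a-1)(b-1)=(c-1)(d-1)$. Taking $(a,b)=(2,7)$ and $(c,d)=(3,4)$ gives the gap multisets $\ms{2,7,12}$ and $\ms{3,4,14}$. These come from the exponent sets $\{2,9,21\}$ and $\{3,7,21\}$, which are exactly the paper's $c_Q$ and $c_P$. Both sets have $\kappa=3$ and top exponent $21$. Both groups are $\mathbb Z/2\oplus\mathbb Z/4\oplus\mathbb Z/3\oplus\mathbb Z/7\simeq\mathbb Z/2\oplus\mathbb Z/84$. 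The gap multisets differ, so the two matrices are gTZ-equivalent but not D-equivalent. Without such an example, your argument establishes the chain of implications but not the strictness of its first link.
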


\begin{proof}
Since the gap multiset $H(P)$ determines both $|P|$ and $G(P)$, it follows
that $\mathrm{D}\Rightarrow\mathrm{gTZ}$. The definition then gives
$\mathrm{gTZ}\Rightarrow\mathrm{bTZ}$.

Taking products of the corresponding block centers and support
$\tau$-tilting posets in Proposition~\ref{prop:blockwise} gives
$\mathrm{bTZ}\Rightarrow\mathrm{TZ}$. Finally,
$\mathrm{TZ}\Rightarrow\mathrm{T}$ follows from
Theorem~\ref{thm:main-recover}.

Next we prove strictness over every field. Write $J_a(\lambda)$ for the Jordan
block of size $a$ with eigenvalue $\lambda$.

First, let $c_P:=J_3(0)\oplus J_7(0)\oplus J_{21}(0)$ and
$c_Q:=J_2(0)\oplus J_9(0)\oplus J_{21}(0)$.
Their exponent sets $P=\{3,7,21\}$ and $Q=\{2,9,21\}$ have the same
cardinality and
$G(P)\simeq\mathbb Z/2\mathbb Z\oplus\mathbb Z/84\mathbb Z
\simeq G(Q)$, whereas
$H(P)=\ms{3,4,14}\ne\ms{2,7,12}=H(Q)$.
Thus $c_P$ and $c_Q$ are gTZ-equivalent but not D-equivalent. Second, the
nilpotent matrices with exponent sets $P=\{1,4\}$ and $Q=\{2,4\}$ have the
same blockwise data $(R[t]/(t^4),2)$, whereas
$G(P)\simeq\mathbb Z/3\mathbb Z$ and
$G(Q)\simeq(\mathbb Z/2\mathbb Z)^2$.
Hence bTZ-equivalence does not imply gTZ-equivalence.

For the third separation, let
\[
 \begin{aligned}
 c&=J_1(0)\oplus J_5(0)\oplus J_2(1),\\
 d&=J_1(0)\oplus J_2(0)\oplus J_5(1).
 \end{aligned}
\]
Both posets are $\Weak(\Sigma_3)\times\Weak(\Sigma_2)$, and both centers
are $R[t]/(t^5)\times R[t]/(t^2)$. Thus $c\simTZ d$. {However,
their blockwise data are, respectively,
\[
 \ms{(R[t]/(t^5),2),(R[t]/(t^2),1)}
 \quad\text{and}\quad
 \ms{(R[t]/(t^2),2),(R[t]/(t^5),1)}.
\]
Hence $c\not\simbTZ d$.} Finally, the exponent sets $\{1,2\}$ and $\{1,3\}$
have the same T-type, but their centers have Loewy lengths $2$ and $3$.
Therefore T-equivalence does not imply TZ-equivalence. All four examples
work over every field.
\end{proof}

\begin{remark}\label{rem:full-strict-chain}
{The Li--Xi relations add the known strict implications
\[
\mathrm{M}\Rightarrow\mathrm{AD}\Rightarrow\mathrm{D}
\]
to the left of the chain in Theorem~\ref{thm:strict-comparison}.
The first implication follows from the definitions, while the second
follows from $H(J(P))=H(P)$. For the first strictness example, take the
exponent sets $\{1,3,5\}$ and $\{2,4,5\}$ from
\cite[Section~3.1, Example~(2)]{LiXiD}. The corresponding nilpotent
matrices are $\mathrm{AD}$-equivalent but not $\mathrm{M}$-equivalent.
Strictness of the second implication is also established in
\cite[Example~5.3]{LiXiD}. In our notation, another example is
$P=\{1,3,6\}$ and $Q=\{1,4,6\}$: these sets have the same gap multiset
$\ms{1,2,3}$, but $P\ne Q$, $J(Q)=\{2,5,6\}\ne P$, and
$J(P)=\{3,5,6\}\ne Q$. Thus the corresponding matrices are
$\mathrm{D}$-equivalent but not $\mathrm{AD}$-equivalent.
Combining these known separations with
Theorem~\ref{thm:strict-comparison} gives the strict seven-term chain.}
\end{remark}

The following dimension formulas are known; we record which reconstruction
data determine them.

\begin{proposition}
\label{prop:homological-consequences}
Let $\Lambda_f(P)$ be the basic Morita representative associated with a
primary block, where $r:=\max P$. Here $\domdim$ and $\gldim$ denote
dominant and global dimension, respectively. Then
\[
 \domdim\Lambda_f(P)=
 \begin{cases}
 \infty,&P=\{r\};\\
 2,&\text{otherwise},
 \end{cases}
 \ \
 \gldim\Lambda_f(P)=
 \begin{cases}
 0,&P=\{1\};\\
 2,&P=\{1,\ldots,r\}\text{ and }r\ge2;\\
 \infty,&\text{otherwise}.
 \end{cases}
\]
Moreover, $\gldim\Lambda_f(P)<\infty$ if and only if
$\det C(P)=1$. Consequently, for centralizer matrix algebras, the multiset
$T_R(c)$ determines dominant dimension, whereas the pair
$(T_R(c),Z(S_n(c,R)))$ determines global dimension; $T_R(c)$ alone does
not.
\end{proposition}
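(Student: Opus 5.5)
We will compute both dimensions directly from the structure of $\Lambda:=\Lambda_f(P)=\End_{\cO}(M_P)$, where $\cO=\cO_f=R[x]/(f^r)$ and $M_P=\bigoplus_{p\in P}\cO/(f^p)$. The key observation is that $\cO$ is a self-injective (Frobenius) algebra and $M_P$ contains the summand $\cO/(f^r)=\cO$, so $M_P$ is a generator-cogenerator. The Morita--Tachikawa correspondence then gives $\domdim\Lambda\ge2$, and more precisely $\domdim\Lambda=2+\inf\{i\ge1\mid\operatorname{Ext}^i_{\cO}(M_P,M_P)\ne0\}$.

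\textbf{Dominant dimension.} If $P=\{r\}$, then $\Lambda=\cO$ is self-injective and $\domdim=\infty$. Otherwise pick $p\in P$ with $p<r$. The periodic resolution $\cdots\to\cO\xrightarrow{f^{r-p}}\cO\xrightarrow{f^p}\cO\to\cO/(f^p)\to0$ gives $\operatorname{Ext}^1_{\cO}(\cO/(f^p),\cO/(f^p))\simeq\cO/(f^p)\ne0$. Hence the infimum above equals $1$, and this contributes one dimension less than one might expect from the formula's "$+2$". I will double-check the normalization of the Mueller formula, namely that $\domdim\End(M)\ge n+2$ if and only if $\operatorname{Ext}^{1..n}(M,M)=0$. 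With $\operatorname{Ext}^1\ne0$ this gives $\domdim=2$ exactly.

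\textbf{Global dimension.} For $P=\{1\}$ we have $\Lambda=K_f$, a field, so $\gldim\Lambda=0$. For $P=\{1,\dots,r\}$ with $r\ge2$, $\Lambda$ is the Auslander algebra of $\cO$, whose representation-finite type is given by the modules $\cO/(f^i)$; Auslander's theorem yields $\gldim\le2$, and $\gldim=2$ because $\Lambda$ is not semisimple (for instance $C(P)$ is not diagonal) and not hereditary. Hereditariness can be excluded since $\domdim=2$ while $\Lambda$ is not self-injective, or more directly because $\Lambda$ has relations. In all remaining cases I will show $\gldim=\infty$ via the Cartan determinant. Proposition~\ref{prop:cartan-gap} gives $\det C(P)=\prod g_i$. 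If $\gldim\Lambda<\infty$, the Cartan map $K_0(\proj)\to K_0(\mathrm{mod})$ is an isomorphism, so $\det C(P)=\pm1$ and hence every $g_i=1$, i.e. $P=\{1,\dots,r\}$. Conversely, $\det C(P)=1$ forces $P=\{1,\dots,r\}$, which has finite global dimension by the previous case. This simultaneously proves the three-case formula and the equivalence $\gldim<\infty\iff\det C(P)=1$. The only care needed is that the Cartan argument is valid over an arbitrary field: the classes of the simples $S_j$, with endomorphism rings $K_f$, form a basis of $K_0$, and finite global dimension makes the Cartan map surjective. The determinant is positive here, so $\pm1$ means $1$.

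\textbf{Consequences.} Blockwise, via \eqref{eq:primary-product} and Morita invariance, dominant dimension of a product is the minimum over blocks, and global dimension is the maximum. The dominant dimension depends only on whether $\kappa_c(f)=1$, so it is read off from $T_R(c)$. For the global dimension, $P_c(f)=\{1,\dots,r\}$ holds exactly when $\kappa_c(f)=r_c(f)$. The block data $(\kappa_c(f),U_c(f))$ are not directly given by the separate pair $(T_R(c),Z)$, so I will argue by counting. From $Z$ we obtain the multiset of Loewy lengths $r_c(f)$, and since always $\kappa_c(f)\le r_c(f)$, all blocks satisfy $\kappa=r$ if and only if the sum of $T_R(c)$ equals the sum of the Loewy lengths recovered from $Z$ (Corollary~\ref{cor:center-recovers}). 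Finiteness of $\gldim$ is thus determined, and its value ($0$ or $2$) is decided by whether all $r_c(f)=1$.

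The exponent sets $\{1,2\}$ versus $\{1,3\}$ show that $T_R$ alone fails: they give $\gldim$ $2$ versus $\infty$. The main obstacle I expect is pinning down $\domdim=2$ exactly, rather than merely $\ge2$, over an arbitrary field; the Mueller Ext criterion handles this once its indexing is verified.
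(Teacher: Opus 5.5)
Your argument is correct, but for the local formulas it takes a genuinely different route from the paper. The paper does not reprove them. It cites \cite[Lemma~4.8]{LiXiD} for the dominant dimension, and \cite[Theorem~9.2]{DPS12} (perfect fields) together with \cite[Corollary~4.5]{ChenXi} (arbitrary fields) for the global dimension. It then uses Proposition~\ref{prop:cartan-gap} only afterwards, to see that $\det C(P)=1$ exactly when $P=\{1,\ldots,r\}$.

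You derive both formulas from general principles. M\"uller's theorem, applied to the generator-cogenerator $M_P$ over the self-injective algebra $R[x]/(f^r)$, gives the dominant dimension. Auslander's bound gives $\gldim\le2$ for the Auslander algebra $\Lambda_f(\{1,\ldots,r\})$. The classical Cartan-determinant argument shows that finite global dimension forces $\det C(P)=\pm1$, and with $\det C(P)=\prod_i g_i$ this rules out every other $P$.

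All three ingredients hold for arbitrary finite-dimensional algebras, so your proof covers every field at once without the perfect-field detour. It also makes $\gldim<\infty\Leftrightarrow\det C(P)=1$ the mechanism of the proof rather than an a posteriori remark; the paper's version is simply shorter. Your treatment of the global consequences is the same as the paper's: minimum and maximum over blocks, $\kappa_c(f)\le r_c(f)$ with the sum comparison, and the example $\{1,2\}$ versus $\{1,3\}$.

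Three small repairs are needed.
\begin{enumerate}
\item M\"uller's criterion gives $\domdim\End_{\cO}(M_P)=1+\inf\{i\ge1\mid\operatorname{Ext}^i_{\cO}(M_P,M_P)\ne0\}$, not $2+\inf$. Your conclusion $\domdim=2$ already uses the correct normalization, so only the displayed formula needs fixing.
\item The extension group is $\operatorname{Ext}^1_{\cO}(\cO/(f^p),\cO/(f^p))\simeq\cO/(f^{\min\{p,r-p\}})$, not $\cO/(f^p)$. It is still nonzero for $1\le p<r$, which is all you use.
\item You exclude $\gldim\Lambda=1$ via ``$\domdim=2$ and not self-injective''. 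This rests on a standard fact you should state: if $\gldim\Lambda\le1$ and $\domdim\Lambda\ge2$, then the injective coresolution $0\to\Lambda\to I^0\to I^1\to0$ consists of projective-injectives and splits, so $\Lambda$ is self-injective. Rely on this rather than on the alternative ``$\Lambda$ has relations'', which only makes sense in the split case $\deg f=1$.
\end{enumerate}
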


\begin{proof}
The dominant-dimension formula is \cite[Lemma~4.8]{LiXiD}.
For perfect fields, the finite-global-dimension criterion and the value
$2$ in the non-semisimple finite-global-dimension case follow from
\cite[Theorem~9.2]{DPS12}. The remaining case $P=\{1\}$ is semisimple
and therefore has global dimension $0$. The complete formula over an
arbitrary field is \cite[Corollary~4.5]{ChenXi}. By
Proposition~\ref{prop:cartan-gap},
$\det C(P)=\prod_{g\in H(P)}g$.
Hence the determinant equals $1$ if and only
if $P=\{1,\ldots,r\}$; compare also \cite[Lemma~6.4]{ChenXi}.

For finite direct products, dominant dimension is the minimum of the
dominant dimensions of the factors, and global dimension is their maximum.
Since $T_R(c)$ is the multiset of the numbers $\kappa_c(f)$, it
determines whether every primary block has only one exponent. Hence it
determines dominant dimension. For every $f$, we have
$\kappa_c(f)\le r_c(f)$, with equality if and only if
$P_c(f)=\{1,\ldots,r_c(f)\}$. Since each difference
$r_c(f)-\kappa_c(f)$ is nonnegative, all primary blocks have finite global
dimension if and only if
$\sum_{f\in\Irr(c)}\kappa_c(f)=
\sum_{f\in\Irr(c)}r_c(f)$.
The pair $(T_R(c),Z(S_n(c,R)))$ determines both sums: $T_R(c)$ determines
the first, while the
Loewy lengths of the center factors $U_c(f)$ determine the second. When finite, the global dimension is zero if all these Loewy lengths are
one, and two otherwise. Thus the pair determines global dimension. Finally, the exponent sets $\{1,2\}$ and $\{1,3\}$ have the
same T-type, while their global dimensions are $2$ and $\infty$,
respectively. Thus $T_R(c)$ does not determine global dimension.
\end{proof}

\section{Morita-string and Morita-gentle centralizer matrix algebras:
classification and reconstruction}\label{sec:string}

{We now determine when the results of Sections~3 and~4 recover the Morita
class: bTZ suffices in the Morita-string class, and TZ in the
Morita-gentle class. The local string classification follows from
Marczinzik's result on monomial algebras of dominant dimension at least
two \cite{MarczinzikMonomial}; we give a direct proof and obtain the gentle
criterion by imposing quadratic relations. These criteria yield
Theorem~\ref{thm:collapse}\textup{(1)--(2)}, while their application to
Morita reconstruction in Theorem~\ref{thm:collapse-body} proves
part~\textup{(3)}.}

\subsection{The split obstruction and the local criterion}
\label{subsec:local-string}

{We use the bound-quiver notions of string and gentle algebras over $R$;
see \cite{BR,AssemSkowronski} and the modern gentle convention in
\cite[pp.~206--207]{AssemSurface}.}
A string $R$-algebra is an algebra of the form $RQ/I$, where $Q$ is a finite quiver with vertex set $Q_0$,
$I$ is an admissible monomial ideal, each vertex is the source and the target of at most two arrows,
and each arrow has at most one permitted continuation on either side.
It is gentle if, in addition, $I$ is generated by paths of length two and each arrow has at most one forbidden continuation on either side.
{Here a continuation is permitted if the corresponding length-two path is
not in $I$, and forbidden otherwise. These algebras are split basic, with
semisimple quotient $R^{Q_0}$. We allow disconnected quivers, loops and
oriented cycles; no triangularity assumption is imposed.}

\begin{definition}\label{def:string-gentle-scope}
A finite-dimensional $R$-algebra is \emph{Morita-string} if it is $R$-linearly Morita equivalent to a string $R$-algebra.
It is \emph{Morita-gentle} if it is $R$-linearly Morita equivalent to a gentle $R$-algebra.
\end{definition}

A \emph{Gabriel presentation} of a split basic algebra $A$ is an
isomorphism $A\simeq RQ/I$ with $Q$ the Gabriel quiver of $A$ and $I$ an
admissible ideal; see \cite[Chap.~II]{ASS2006}. It is \emph{monomial} if
$I$ is monomial. The following lemma allows us to transfer such
presentations under Morita equivalence.

\begin{lemma}
\label{lem:morita-monomial-transfer}
Let $A$ and $B$ be finite-dimensional basic $R$-algebras. If they are
$R$-linearly Morita equivalent, then $A\simeq B$ as $R$-algebras.
Consequently, if $B$ has a monomial Gabriel presentation, then so does $A$.
\end{lemma}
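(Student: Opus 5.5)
The argument follows the standard route for basic algebras via progenerators. Let $F\colon A\text{-}\mathrm{mod}\to B\text{-}\mathrm{mod}$ be an $R$-linear equivalence. Put $P:=F(A)$. Then $P$ is a progenerator of $B\text{-}\mathrm{mod}$, and $F$ gives an $R$-algebra isomorphism $A^{\mathrm{op}}\simeq\End_A(A)\simeq\End_B(P)$. We then show that $P\simeq B$ as left $B$-modules, which yields $A^{\mathrm{op}}\simeq\End_B(B)\simeq B^{\mathrm{op}}$ and hence $A\simeq B$ as $R$-algebras.

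To prove $P\simeq B$, we use the basic hypothesis. Write $A=\bigoplus_{i=1}^{n}Ae_i$ with the $Ae_i$ pairwise non-isomorphic indecomposable projectives, where $n=|A|$.
\begin{enumerate}[label=\textup{(\roman*)}]
\item An equivalence preserves indecomposability, projectivity and isomorphism classes. Therefore $P\simeq\bigoplus_{i=1}^{n}F(Ae_i)$ is a direct sum of $n$ pairwise non-isomorphic indecomposable projective $B$-modules.
\item The equivalence induces a bijection on isomorphism classes of simple modules, so $|A|=|B|$.
\item Every indecomposable projective $B$-module is the projective cover of a simple module. Hence there are exactly $|B|=n$ isomorphism classes of indecomposable projective $B$-modules.
\item Thus the summands in (i) exhaust all these classes, each occurring once. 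Since $B$ is basic, $B\simeq\bigoplus_{j=1}^{n}Bf_j$ with each class occurring once as well. By Krull--Schmidt, $P\simeq B$.
\end{enumerate}
All identifications are $R$-linear because $F$ is, so the resulting isomorphism $A\simeq B$ is one of $R$-algebras.

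For the consequence, suppose $B$ has a monomial Gabriel presentation $B\simeq RQ/I$, with $Q$ the Gabriel quiver of $B$ and $I$ admissible monomial. Composing with $A\simeq B$ gives $A\simeq RQ/I$. The Gabriel quiver is an isomorphism invariant: it is computed from $A/\rad A\simeq B/\rad B$ and from $\rad A/\rad^2A$ with its bimodule structure. So $Q$ is also the Gabriel quiver of $A$, and this isomorphism is a monomial Gabriel presentation of $A$.

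The only step needing care is (i)--(iv). The key point there is that "basic" is intrinsic, so the count of indecomposable projectives can be read off from simples. No splitting hypothesis on $R$ is used, because $\End_B(P)$ is computed directly rather than through dimension counts over residue fields.
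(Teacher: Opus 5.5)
Your proposal is correct and follows essentially the same route as the paper: show $F({}_AA)\simeq{}_BB$ because both regular modules contain each indecomposable projective exactly once, deduce $A^{\mathrm{op}}\simeq\End_B(F(A))\simeq B^{\mathrm{op}}$ from $R$-linear full faithfulness, and transport the monomial Gabriel presentation along the resulting isomorphism. Your steps (i)--(iv) only make explicit the counting and Krull--Schmidt argument that the paper compresses into a single sentence.
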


\begin{proof}
An $R$-linear equivalence
{$F:A\text{-}\mathrm{mod}\to B\text{-}\mathrm{mod}$} induces a
bijection between the indecomposable projective modules.  Since $A$ and
$B$ are basic, their left regular modules contain each indecomposable
projective exactly once. Hence $F({}_AA)\simeq{}_BB$. Since $F$ is
$R$-linear and fully faithful, it follows that
$
 A^{\mathrm{op}}=\End_A({}_AA)
 \simeq\End_B(F({}_AA))
 \simeq\End_B({}_BB)=B^{\mathrm{op}}
$
as $R$-algebras. Taking opposite algebras gives $A\simeq B$.
Transporting a monomial Gabriel presentation of $B$ along this isomorphism
proves the last assertion.
\end{proof}

We classify the basic local models $\Lambda_f(P)$ for monic irreducible
$f$, without treating arbitrary local principal $R$-algebras $\cO$.

Over a perfect field, the Morita-string condition is strictly more
restrictive than representation-finiteness. For example, if $r\ge4$,
then $P=\{1,r\}$ gives a representation-finite block that is not
Morita-string (see \cite[Lemma~3.3]{LiXiD} and
\cite[Theorem~2.1(i)]{DrozdMazorchuk}).
{Over algebraically closed fields, Chan--Marczinzik classify
representation-finite gendo-symmetric biserial algebras using Brauer-tree
and hook-module data \cite[Theorem~3.9]{ChanMarczinzik}. Their
Corollary~3.10 describes the Nakayama case by adjoining radicals of
indecomposable projectives to a symmetric Nakayama algebra. Below we
specialize the monomial classification of \cite{MarczinzikMonomial},
which applies over an arbitrary field, to primary blocks of centralizer
matrix algebras.}

The semisimple quotient first forces $f$ to be linear. We then use the
Gabriel quiver and radical-filtration degrees to exclude the nonmonomial
cases.

\begin{lemma}\label{lem:linear-obstruction}
Let $f\in R[x]$ be monic and irreducible, and let $P=\{p_1<\cdots<p_s\}\subseteq\mathbb Z_{>0}$ be nonempty.
With $K_f$ as fixed above, $\Lambda_f(P)/\rad \Lambda_f(P)\simeq \prod_{i=1}^{s}K_f$.
Consequently, if $\Lambda_f(P)$ is $R$-linearly Morita equivalent to a split basic $R$-algebra, then $K_f\simeq R$ as $R$-algebras, i.e., $f$ is linear.
\end{lemma}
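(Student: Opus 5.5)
We first compute the semisimple quotient and then apply Lemma~\ref{lem:morita-monomial-transfer}.

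\emph{Step 1: the radical.} Put $A:=\Lambda_f(P)$ and $\cO:=\cO_f=R[x]/(f^{p_s})$, so that $A\simeq\Lambda_{\cO}(P)$ as in the proof of Corollary~\ref{cor:local-weak-order}. Keep the basis elements $u_{ji}$ and the idempotents $e_i$ from the proof of Theorem~\ref{thm:uniform-quotient}. Let $\mathfrak r\subseteq A$ be the subspace of endomorphisms whose $(j,i)$-entry lies in $\pi\Hom_{\cO}(M(i),M(j))$ for $i=j$, and in $\Hom_{\cO}(M(i),M(j))$ for $i\neq j$. The composition rule $u_{hj}u_{ji}=\pi^{\delta(i,j,h)}u_{hi}$ shows that $\mathfrak r$ is a two-sided ideal. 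Indeed, when $h=i\ne j$ we have $\delta(i,j,i)=|p_j-p_i|\ge1$, so a round trip lands in the $\pi$-multiples of the diagonal.

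Moreover, $A/\mathfrak r\simeq\prod_{i=1}^s e_iAe_i/\pi e_iAe_i\simeq\prod_{i=1}^s\cO/(\pi)=K_f^s$, which is semisimple. Hence $\rad A\subseteq\mathfrak r$.

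For the reverse inclusion, we show that $\mathfrak r$ is nilpotent. Its image in $\overline A=A/(z_\pi)\simeq B_s(K_f)$ is spanned by the arrows of $Q_s$, and this span is the arrow ideal. Its $s$-th power vanishes, because every path of length $s$ in $Q_s$ either changes direction or does not exist. Thus $\mathfrak r^s\subseteq(z_\pi)=z_\pi A$, and $\mathfrak r^{s\ell}\subseteq z_\pi^\ell A=0$. So $\mathfrak r$ is nilpotent and $\mathfrak r=\rad A$. This proves $A/\rad A\simeq\prod_{i=1}^s K_f$.

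An alternative argument avoids the quotient: $\mathfrak r$ consists of radical morphisms between indecomposable modules with local endomorphism rings, which identifies it with the radical directly. The route above is the one I would write, since it reuses Theorem~\ref{thm:uniform-quotient}.

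\emph{Step 2: the consequence.} Suppose $A$ is $R$-linearly Morita equivalent to a split basic algebra $B$, so that $B/\rad B\simeq R^{m}$.

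First, $A$ is basic, because its simple quotients $K_f e_i$ are pairwise nonisomorphic. This is also clear from the fact that the $M(i)$ are pairwise nonisomorphic. Lemma~\ref{lem:morita-monomial-transfer} therefore gives $A\simeq B$ as $R$-algebras.

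Such an isomorphism preserves Jacobson radicals, so $K_f^s\simeq R^m$ as $R$-algebras. Comparing the simple factors, that is, the primitive central idempotents, gives $K_f\simeq R$. Hence $\deg f=\dim_R K_f=1$, so $f$ is linear.

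One could instead avoid basicness and use that a Morita equivalence preserves the endomorphism rings of simple modules, as in Remark~\ref{rem:residue-field-blindness}. The simple $A$-modules have endomorphism ring $K_f$, while those of $B$ have endomorphism ring $R$.

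\emph{Main obstacle.} The only nontrivial point is the identification of $\rad A$ in Step~1. The key ingredient there is the nilpotency bound coming from the monotone-path description of $B_s(K_f)$. Step~2 is formal.
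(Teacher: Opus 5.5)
Your proof is correct, but it identifies the radical by a different route from the paper.

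\textbf{Step 1.} The paper argues categorically. Since the summands $R[x]/(f^{p_i})$ are pairwise nonisomorphic and have local endomorphism rings, $e_j(\rad A)e_i$ is the radical of the Krull--Schmidt category $\add X$. Off the diagonal this is all of $\Hom(X_i,X_j)$, and on the diagonal it is the noninvertible endomorphisms. This is exactly the alternative you mention in passing.

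You instead show directly that $\mathfrak r$ is a two-sided ideal with semisimple quotient $K_f^s$, using $\delta(i,j,i)=|p_j-p_i|\ge1$. You then prove nilpotency by mapping $\mathfrak r$ into $B_s(K_f)$, where it becomes the arrow ideal with vanishing $s$-th power, so $\mathfrak r^{sp_s}\subseteq z_\pi^{p_s}A=0$. This is fully explicit and gives a nilpotency bound. The cost is a dependence on Theorem~\ref{thm:uniform-quotient}, whereas the paper's identification is self-contained and essentially one line.

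One wording point: the image of $\mathfrak r$ in $\overline A$ is the span of the nonzero paths of positive length, i.e.\ the ideal generated by the arrows. It is not the span of the arrows themselves.

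\textbf{Step 2.} The paper uses that an $R$-linear Morita equivalence preserves the endomorphism division rings of simple modules, which are $K_f$ for $A$ and $R$ for a split basic algebra. This needs no basicness hypothesis. Your main route instead observes that $A$ is basic, invokes Lemma~\ref{lem:morita-monomial-transfer} to obtain $A\simeq B$, and compares $K_f^s\simeq R^m$ factor by factor. Both are valid: yours relies on the earlier lemma, the paper's on a general Morita invariant.
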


\begin{proof}
Put $X_i:=R[x]/(f^{p_i})$, $X:=\bigoplus_iX_i$, $A:=\Lambda_f(P)$, and let $e_i$ correspond to $X_i$.
Each $\End_{R[x]}(X_i)\simeq R[x]/(f^{p_i})$ is local.
Since the $X_i$ are pairwise nonisomorphic, every morphism between distinct summands belongs to the radical of the Krull--Schmidt category $\add X$,
while an endomorphism of $X_i$ belongs to that radical exactly when it is noninvertible. It follows that
\[
 e_j(\rad A)e_i=
 \begin{cases}
  \rad\End_{R[x]}(X_i)=(f)/(f^{p_i}),&i=j,\\[2pt]
  \Hom_{R[x]}(X_i,X_j),&i\ne j.
 \end{cases}
\]
Thus the off-diagonal corners vanish modulo $\rad A$, and
\[
 A/\rad A\simeq
 \prod_{i=1}^s
 \frac{\End_{R[x]}(X_i)}{\rad\End_{R[x]}(X_i)}
 \simeq K_f^{\,s}.
\]
Thus every simple $A$-module has endomorphism division ring $K_f$.
An $R$-linear Morita equivalence preserves these rings, which are $R$ for
a split basic algebra. Hence $K_f\simeq R$ as $R$-algebras and
$\deg f=\dim_R K_f=1$. The converse follows from
$R[x]/(x-\lambda)\simeq R$ for $\lambda\in R$.
\end{proof}

It remains to consider linear $f$. For $p\geq1$, set
$R_p:=R[t]/(t^p)$ and $N_p:=\End_{R[t]}(R_p\oplus R_{p+1})$.
The following lemma describes the two-point string family.

\begin{lemma}\label{lem:two-point}
For $p\geq1$, let $Q^{(2)}$ be the two-vertex quiver
\[
 Q^{(2)}:\qquad
 1\underset{b}{\overset{a}{\rightleftarrows}}2.
\]
Then $N_p\simeq RQ^{(2)}/((ba)^p)$.
Under this isomorphism, $a$ is the canonical inclusion
$R_p\to R_{p+1}$ given by multiplication by $t$, and $b$ is the
canonical projection $R_{p+1}\to R_p$. Moreover, $N_p$ is a connected
representation-finite Nakayama algebra that is also a string algebra,
$Z(N_p)\simeq R[z]/(z^{p+1})$ for $z:=ba+ab$, and
$Z(R_p)=R_p$.
\end{lemma}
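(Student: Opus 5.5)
The plan is to write down the morphism spaces of $N_p=\End_{R[t]}(R_p\oplus R_{p+1})$ explicitly and match them with the paths of $Q^{(2)}$. Let $e_1,e_2$ be the idempotents for $R_p$ and $R_{p+1}$. Since every ideal of $R[t]/(t^{p+1})$ is a power of $(t)$, evaluation at $1$ gives the following description, as in the proof of Theorem~\ref{thm:uniform-quotient}. The corner $e_1N_pe_1\simeq R_p$ has basis $t^j$ with $0\le j<p$, and $e_2N_pe_2\simeq R_{p+1}$ has basis $t^j$ with $0\le j\le p$. The corner $e_2N_pe_1=\Hom(R_p,R_{p+1})$ is generated by $a\colon 1\mapsto t$, with basis $t^ja$ for $0\le j<p$. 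The corner $e_1N_pe_2$ is generated by the projection $b$, with basis $t^jb$ for $0\le j<p$. In total $\dim_RN_p=p+(p+1)+2p=4p+1$.

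Next I will compute the compositions: $ba=t\,e_1$ (multiplication by $t$ on $R_p$) and $ab=t\,e_2$. It follows that $t^je_1=(ba)^j$, $t^je_2=(ab)^j$, $t^ja=a(ba)^j$ and $t^jb=b(ab)^j$. Hence the assignment $\varepsilon_i\mapsto e_i$, $a\mapsto a$, $b\mapsto b$ gives a surjection $RQ^{(2)}\to N_p$, and it kills $(ba)^p=t^pe_1=0$. The quotient $RQ^{(2)}/((ba)^p)$ is spanned by the alternating paths that contain no subpath $(ba)^p$. At vertex $1$ these are the paths $(ba)^j$ with $j<p$, and $a(ba)^j$, $b(ab)^j$ with $j<p$. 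At vertex $2$ they are $(ab)^j$, and here $(ab)^{p+1}=a(ba)^pb=0$, so $j\le p$. This gives at most $4p+1$ paths, so the surjection is an isomorphism. The ideal is admissible, since it contains all paths of length at least $2p+1$ and lies in the square of the arrow ideal.

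For the structural claims I will check the string axioms directly. Each vertex has one incoming and one outgoing arrow. The relation is monomial. Each arrow has a unique continuation ($a$ is followed only by $b$, and conversely), so $N_p$ is a string algebra. It is Nakayama because the quiver is a single oriented cycle, and connected and representation-finite because Nakayama algebras are representation-finite. For the center, $z=ab+ba$ is central: $za=aba=az$ and $zb=bab=bz$. Also $z^j=(ab)^j+(ba)^j$, which is nonzero for $j\le p$ because $(ab)^p\ne0$, while $z^{p+1}=0$. A central element commutes with the $e_i$, so it is diagonal, and commuting with $a$ and $b$ forces its two diagonal polynomial components to agree. This mirrors the argument in Theorem~\ref{thm:principal-hh-reconstruction}, and it shows $Z(N_p)=R[z]$ has dimension $p+1$. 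Alternatively one can invoke that theorem directly with $\cO=R_{p+1}$ and $P=\{p,p+1\}$, which gives $Z\simeq\cO_P=R_{p+1}$. Finally, $Z(R_p)=R_p$ holds because $R_p$ is commutative.

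The main obstacle is keeping the composition convention consistent (right to left) when identifying $ba$ with $t e_1$ rather than $t e_2$. The dimension count for the relation is then routine.
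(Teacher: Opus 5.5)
Your proposal is correct and follows essentially the same route as the paper: you identify the four corners of $N_p$ with their $t$-power bases, match them with the nonzero paths of $RQ^{(2)}/((ba)^p)$ to get dimension $4p+1$, read off the string and Nakayama properties from the quiver, and compute the center by diagonality plus commutation with $a$ and $b$. The differences are cosmetic. You use a surjection-plus-dimension count, and you deduce the Nakayama property from the quiver shape rather than from uniserial projectives of lengths $2p$ and $2p+1$. Commutation with $b$ only forces the diagonal components to agree modulo $t^p$, with the coefficient of $(ab)^p$ free; your dimension count $p+1$ and the term $z^p=(ab)^p$ correctly account for this.
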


\begin{proof}
Write $M_1:=R_p$ and $M_2:=R_{p+1}$. Let
$a([g]_{t^p}):=[tg]_{t^{p+1}}$ and
$b([g]_{t^{p+1}}):=[g]_{t^p}$, where $[g]_{t^q}$ denotes the residue
class of $g$ modulo $(t^q)$. Then $ba=\mu_t|_{M_1}$ and
$ab=\mu_t|_{M_2}$, where $\mu_t$ denotes multiplication by $t$.
Hence $(ba)^p=0$, $(ab)^p\ne0$, and
$(ab)^{p+1}=a(ba)^pb=0$,
and $a,b$ and the vertex idempotents induce a homomorphism
\(
 \Phi\colon RQ^{(2)}/((ba)^p)\longrightarrow N_p.
\)
{Here and below, $(ba)^0:=\varepsilon_1$ and
$(ab)^0:=\varepsilon_2$, the
trivial paths at their respective vertices.}
{The following nonzero paths form a basis:}
\[
\begin{aligned}
 &
 \{(ba)^r\mid0\le r<p\}
 \cup\{(ab)^r\mid0\le r\le p\}\\
 &{}\cup\{a(ba)^r\mid0\le r<p\}
 \cup\{(ba)^rb\mid0\le r<p\}.
\end{aligned}
\]
{Under $\Phi$, the four displayed parts are the standard $t$-power bases of}
$\End(M_1)$, $\End(M_2)$, $\Hom(M_1,M_2)$ and $\Hom(M_2,M_1)$,
respectively. Thus $\Phi$ is an isomorphism and $\dim_R N_p=4p+1$.
The presentation is monomial, each vertex has one incoming and one outgoing
arrow, and every arrow has at most one continuation.  Thus $N_p$ is a
connected string algebra.  Its indecomposable projectives are uniserial of
lengths $2p$ and $2p+1$; therefore it is a representation-finite Nakayama
algebra.

{For the element $z$ in the statement,} since its summands lie in orthogonal corners,
$z^r=(ba)^r+(ab)^r$ for $0\le r<p$, while
$z^p=(ab)^p\ne0$ and $z^{p+1}=0$. If $\xi\in Z(N_p)$, then commutation
with the vertex idempotents gives
$\xi=\sum_{r=0}^{p-1}\lambda_r(ba)^r+
\sum_{r=0}^{p}\mu_r(ab)^r$.
The equations $\xi a=a\xi$ and $\xi b=b\xi$ imply
$\lambda_r=\mu_r$ for $0\le r<p$, while $\mu_p$ is free.
Consequently, $Z(N_p)=\bigoplus_{r=0}^{p}Rz^r
\simeq R[z]/(z^{p+1})$.
Finally, $Z(R_p)=R_p$.
\end{proof}

%\begin{lemma}\label{lem:obstruction}
%Let $A$ be a finite-dimensional split basic $R$-algebra with Gabriel quiver $Q$.
%For every complete set $\{e_i\}_{i\in Q_0}$ of primitive orthogonal idempotents of $A$ whose images in $A/\rad A$ are the vertex idempotents indexed by $Q_0$, choose, for each $i,j\in Q_0$, a basis of $e_j\rad A e_i/e_j\rad^2 A e_i$ indexed by the arrows
%$\alpha\colon i\to j$ of $Q$,
%together with lifts $\widetilde\alpha\in e_j\rad A e_i$.
%Let $\phi\colon RQ\twoheadrightarrow A$ be determined by $\phi(\varepsilon_i)=e_i$ and $\phi(\alpha)=\widetilde\alpha$,
%where $\varepsilon_i$ is the trivial path at vertex $i$.
%Suppose that, for every such choice, there are distinct paths $w_1,\dots,w_m$ in $Q$ such that $\phi(w_i)\ne0$ for every $i$, whereas $\phi(w_1),\dots,\phi(w_m)$ are linearly dependent. 
%Then $A$ has no monomial Gabriel presentation.
%\end{lemma}

Let $A$ be a finite-dimensional split basic $R$-algebra with Gabriel
quiver $Q$. Choose a complete set of primitive orthogonal idempotents
$\{e_i\}_{i\in Q_0}$ lifting the vertex idempotents of $A/\rad A$.
For each $i,j\in Q_0$, choose a basis of
$e_j\rad A e_i/e_j\rad^2 A e_i$ indexed by the arrows
$\alpha\colon i\to j$, with lifts $\widetilde\alpha\in e_j\rad A e_i$.
{These choices determine a unique surjective $R$-algebra homomorphism
$\phi\colon RQ\twoheadrightarrow A$ with
$\phi(\varepsilon_i)=e_i$ and $\phi(\alpha)=\widetilde\alpha$, where
$\varepsilon_i$ is the trivial path at $i$; see
\cite[Chap.~II, Section~3]{ASS2006}. The construction uses the split
basic hypothesis and does not require $R$ to be algebraically closed.}
%The following elementary observation makes the obstruction independent of the choices of primitive idempotents and arrow representatives.

\begin{lemma}\label{lem:obstruction}
Suppose that for every such choice of $\phi$, there exist distinct paths
$w_1,\dots,w_m$ in $Q$ such that $\phi(w_i)\ne 0 $ for every $i$, 
but $\phi(w_1),\dots,\phi(w_m)$ are linearly dependent.
Then $A$ has no monomial Gabriel presentation.
\end{lemma}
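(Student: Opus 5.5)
The argument is by contraposition: a monomial Gabriel presentation would itself be one of the admissible choices of $\phi$, and for such a $\phi$ nonzero images of distinct paths are always linearly independent.

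\emph{Setting up the presentation as a choice.} Suppose $A$ has a monomial Gabriel presentation, i.e.\ an $R$-algebra isomorphism $\theta\colon RQ/I\xrightarrow{\sim}A$ with $I$ admissible and monomial. Let $\phi$ be the composite $RQ\twoheadrightarrow RQ/I\xrightarrow{\theta}A$. We check that $\phi$ arises from the construction preceding the lemma.
\begin{itemize}
\item Since $I$ is admissible, the classes of the trivial paths $\varepsilon_i$ in $RQ/I$ form a complete set of primitive orthogonal idempotents lifting the vertex idempotents of the semisimple quotient $R^{Q_0}$. Hence so do their images $e_i:=\phi(\varepsilon_i)$.
\item Admissibility also gives $\rad(RQ/I)=R Q^{+}/I$ and $\rad^2(RQ/I)=(RQ^{+})^2/I$, where $I\subseteq (RQ^{+})^2$. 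So the classes of the arrows $\alpha\colon i\to j$ form a basis of $\varepsilon_j\rad\varepsilon_i/\varepsilon_j\rad^2\varepsilon_i$.
\item Transporting along $\theta$, the elements $\phi(\alpha)$ are lifts of a basis of $e_j\rad Ae_i/e_j\rad^2Ae_i$ indexed by the arrows of $Q$.
\end{itemize}
Thus $\phi$ is one of the choices in the hypothesis. One point needs care here: we must confirm that the quiver in the presentation is the Gabriel quiver $Q$ itself, with the same vertex labelling. This holds by definition of a Gabriel presentation, after relabelling the vertices compatibly with the identification of $A/\rad A$.

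\emph{Independence of nonzero path images.} We show that for this $\phi$, nonzero images of distinct paths are always linearly independent. Since $I$ is monomial, $I$ is spanned by the paths it contains; this is the standard fact that an ideal generated by paths is spanned by the paths it contains. So the set $\mathcal B$ of paths not in $I$ maps to a basis of $RQ/I$. Let $w_1,\dots,w_m$ be distinct paths with $\phi(w_i)\neq0$. Then $w_i\notin I$, so each $w_i\in\mathcal B$, and their classes are distinct basis elements of $RQ/I$. Applying the isomorphism $\theta$, the elements $\phi(w_1),\dots,\phi(w_m)$ are linearly independent in $A$. This contradicts the hypothesis applied to this particular $\phi$, so $A$ has no monomial Gabriel presentation.

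The main obstacle is the spanning fact: a two-sided ideal generated by paths has the paths it contains as a basis. We justify it directly. Any element of the ideal is a linear combination of products $u\,p\,v$ with $p$ a generating path and $u,v$ paths, and each such product is either zero or again a path in $I$. The only remaining step is that a linear combination of distinct paths lies in the path-spanned subspace exactly when each of its paths does, which is immediate since paths form a basis of $RQ$.
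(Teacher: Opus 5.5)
Your proof is correct and follows the same route as the paper's. A monomial Gabriel presentation is itself one of the admissible maps $\phi$, and the paths outside the monomial kernel $I$ form an $R$-basis of $RQ/I$, so distinct paths with nonzero images are linearly independent, contradicting the hypothesis; you simply make explicit the two facts the paper leaves implicit (that the presentation is a valid choice of $\phi$, and that an ideal generated by paths is spanned by the paths it contains).
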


\begin{proof}
A monomial Gabriel presentation would give one of the maps $\phi$ in
the statement with monomial kernel $I$. The paths outside $I$ form an
$R$-basis of $RQ/I$, so distinct paths with nonzero images under $\phi$
would be linearly independent, a contradiction.
\end{proof}

For the rest of this subsection, put $M(r):=R[t]/(t^r)$ for $r\ge1$. If
$P=\{p_1<\cdots<p_s\}$, set
$\Lambda(P):=\End_{R[t]}(\bigoplus_{i=1}^s M(p_i))$.

For each $r$, equip $M(r)$ with its radical (equivalently, $t$-adic)
filtration
\[
 M(r)=t^0M(r)\supset tM(r)\supset\cdots\supset
 t^{r-1}M(r)\supset t^rM(r)=0.
\]
For a nonzero map
$\varphi\in\Hom_{R[t]}(M(a),M(b))$, define its
\emph{radical-filtration degree} by
\[
 \fdeg_t(\varphi):=
 \max\bigl\{d\in\{0,\ldots,b-1\}\mid
       \operatorname{Im}\varphi\subseteq t^dM(b)\bigr\},
\]
and set $\fdeg_t(0):=\infty$. Equivalently, $\fdeg_t(\varphi)=d$ if and only if $\varphi(1)\in t^dM(b)\setminus t^{d+1}M(b)$.
We use the conventions $\infty>n$ and $n+\infty=\infty+n=\infty+\infty=\infty$ for every integer $n\ge0$.

For $1\leq a<b$, let $\iota_{a,b}\colon M(a)\to M(b)$ and $\pi_{b,a}\colon M(b)\to M(a)$ be given by $\iota_{a,b}(1):=t^{b-a}$ and $\pi_{b,a}(1):=1$.
These are the standard inclusion and projection. A \emph{backtrack} is the closed path of length two that follows an arrow between adjacent vertices and immediately returns along the opposite arrow.

The next lemma computes the Gabriel quiver and the filtration degrees
needed for Lemma~\ref{lem:obstruction}.

\begin{lemma}\label{lem:quiver-rigidity}
Let $P=\{p_1<\cdots<p_s\}$ and let $Q$ be the Gabriel quiver of
$\Lambda(P)$. The arrows of $Q$ are as follows:
\begin{enumerate}[label=\textup{(\arabic*)}]
\item for consecutive $p<q$ in $P$, the standard inclusion $M(p)\to M(q)$ and the standard projection $M(q)\to M(p)$;
\item a loop at $M(r)$ exactly when $r\ge 2$ and neither $r-1$ nor $r+1$ belongs to $P$.
\end{enumerate}
Each arrow space is one-dimensional. Moreover:
\begin{enumerate}[label=\textup{(\alph*)}]
\item if $p<q$ are consecutive in $P$, then a representative of the inclusion arrow $M(p)\to M(q)$ has radical-filtration degree $q-p$, a representative of the projection arrow $M(q)\to M(p)$ has radical-filtration degree $0$, and their closed two-step backtrack at $M(q)$ has radical-filtration degree $q-p$;
\item a loop at $M(q)$ has radical-filtration degree one; if $q+1\in P$, the backtrack
$M(q)\to M(q+1)\to M(q)$ has radical-filtration degree one for $q\ge2$, while for $q=1$
this backtrack is zero;
\item if $p,p+1,p+2\in P$, both backtracks at $M(p+1)$ have radical-filtration degree one.
\end{enumerate}
\end{lemma}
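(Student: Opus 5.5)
The plan is to compute $\rad A$ and $\rad^2A$ for $A:=\Lambda(P)$ corner by corner, and then read off arrows as a basis of $e_j(\rad A/\rad^2A)e_i$.

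\emph{Step 1: the radical.} By the proof of Lemma~\ref{lem:linear-obstruction} (with $f=t$), $e_j(\rad A)e_i=\Hom(M(p_i),M(p_j))$ for $i\ne j$. On the diagonal, $e_i(\rad A)e_i=tR[t]/(t^{p_i})$. Every element of $\Hom(M(a),M(b))$ has the form $u\cdot t^{d}$ with $d\ge(b-a)_+$, where $u$ is a unit or zero. Hence $\fdeg_t(\varphi)$ is just the $t$-adic valuation of $\varphi(1)$.

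\emph{Step 2: the composition rule.} Valuations add under composition: $\fdeg_t(\psi\varphi)=\fdeg_t(\psi)+\fdeg_t(\varphi)$ when this sum is smaller than the target length, and the composite is zero otherwise. This is the formula $u_{hj}u_{ji}=\pi^{\delta}u_{hi}$ from the proof of Theorem~\ref{thm:uniform-quotient}.

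\emph{Step 3: computing $\rad^2A$.}
\begin{itemize}
\item Off the diagonal, take $i<j$. A non-consecutive map $M(p_i)\to M(p_j)$ of degree $d\ge p_j-p_i$ factors through $M(p_{i+1})$ as a monotone composite, by the $\delta=0$ case. The same holds for projections. So the only off-diagonal arrows are those between consecutive vertices. The consecutive inclusion of minimal degree $q-p$ and the projection of degree $0$ are not composites of two radical maps: any composite either passes through another vertex, forcing a non-monotone factor of larger degree, or uses a diagonal radical element, which raises the degree by at least one. Thus each of these arrow spaces is one-dimensional.
\item On the diagonal at $M(r)$, $\rad^2$ contains $t^2$ times everything. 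It also contains the backtracks through the neighbours in $P$. Through a larger neighbour $q>r$ the backtrack has degree $q-r$; through a smaller neighbour $p<r$ it has degree $r-p$. So $t\cdot e$ lies in $\rad^2$ exactly when some neighbour in $P$ is at distance one. One has to be careful here: through $M(r+1)$ the backtrack $M(r)\to M(r+1)\to M(r)$ has degree $1$, but when $r=1$ it vanishes because $t\cdot M(1)=0$.
\item Consequently a loop, which has degree one, exists iff $r\ge2$ and $r\pm1\notin P$. This gives~(2) and the first part of~(b).
\end{itemize}

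\emph{Step 4: the lettered assertions.} These are then immediate consequences of Step 2.
\begin{itemize}
\item For~(a), the backtrack at $M(q)$, namely inclusion after projection, has degree $0+(q-p)$.
\item For~(b), the backtrack through $M(q+1)$ has degree $1+0$, and it is zero if $q=1$.
\item For~(c), the backtracks at $M(p+1)$ through $M(p)$ and through $M(p+2)$ both have degree $1$. Both are nonzero because $p+1\ge2$.
\end{itemize}

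The main obstacle is the irreducibility argument in Step 3, that is, showing that no element of degree exactly one lies in $\rad^2$ when neither neighbour is adjacent. I would handle it by a degree count. Every product of two radical elements at $M(r)$ is a sum of terms of the form diagonal$\cdot$diagonal, which have degree at least $2$, and backtracks through $M(p_{i\pm1})$, which have degree at least $|p_{i\pm1}-r|\ge2$. Longer excursions factor through these, so every element of $e_r\rad^2A\,e_r$ has degree at least $2$.
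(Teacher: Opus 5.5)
Your proposal is correct and follows essentially the paper's argument: filtration degrees add under composition, non-consecutive maps factor monotonically through an intermediate vertex, and consecutive arrows and loops are detected by a degree count in the corners of $\rad^2\Lambda(P)$. The one point to make explicit is that (a)--(c) must hold for arbitrary arrow representatives, since this is how Proposition~\ref{prop:monomial-obstruction} uses them, whereas your Step~4 computes only with the standard maps. This follows at once from your Steps~1--3: in each corner carrying an arrow, the corresponding corner of $\rad^2\Lambda(P)$ consists exactly of the maps of degree larger than the standard arrow's, so every representative is a unit of $R[t]$ times the standard map, and composites of representatives are unit multiples of the standard composites (this is the content of the paper's Step~4).
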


\begin{proof}
\smallskip
\noindent\emph{Step 1.}
Evaluation at $1$ identifies a homomorphism with an element of $M(b)$
annihilated by $t^a$.  Hence
\[
\begin{aligned}
 \Hom_{R[t]}(M(a),M(b))
 &=t^{(b-a)_+}R[t]/(t^b)\\
 &=\operatorname{span}_R
   \{t^{(b-a)_+},t^{(b-a)_++1},\dots,t^{b-1}\},
\end{aligned}
\]
In particular,
$\dim_R \Hom_{R[t]}(M(a),M(b))=\min\{a,b\}$.
If $h\geq1$ and
\(
 M(a)\xrightarrow{\varphi}M(h)
 \xrightarrow{\psi}M(b)
\)
is any factorization and
$\alpha:=\fdeg_t(\varphi)$ and $\beta:=\fdeg_t(\psi)$ are finite, write
\[
 \varphi(1)=t^\alpha u(t),\qquad
 \psi(1)=t^\beta v(t),
\]
where $u(0),v(0)\ne0$. Then
\[
 \fdeg_t(\psi\varphi)=
 \begin{cases}
  \alpha+\beta,&\alpha+\beta<b,\\
  \infty,&\alpha+\beta\ge b.
 \end{cases}
\]
Thus, including zero maps,
\[
 \fdeg_t(\psi\varphi)\ge
 \fdeg_t(\varphi)+\fdeg_t(\psi).
\]
Every map $M(a)\to M(h)$ has filtration degree at least $(h-a)_+$, and
every map $M(h)\to M(b)$ has filtration degree at least $(b-h)_+$.
Consequently,
\(
 \fdeg_t(\psi\varphi)\ge
 (h-a)_+ +(b-h)_+.
\)
For the corresponding standard inclusion--projection factorization, the
composite is multiplication by
\(
 t^{(h-a)_+ +(b-h)_+}.
\)
The bound is therefore attained whenever this standard composite is
nonzero.

\smallskip
\noindent\emph{Step 2.}
Let $\mathfrak r$ denote the radical of the additive category generated by
$\{M(p_i)\}_{i=1}^s$, and let $e_a$ be the idempotent of $\Lambda(P)$
corresponding to $M(a)$. Since the modules $M(p_i)$ are pairwise
nonisomorphic and have local endomorphism rings, the argument used in
Lemma~\ref{lem:linear-obstruction} gives
$e_b(\rad\Lambda(P))e_a=\mathfrak r(M(a),M(b))$ and
$e_b(\rad^2\Lambda(P))e_a=\mathfrak r^2(M(a),M(b))$.
Thus the arrow space from $M(a)$ to $M(b)$ is
$\mathfrak r(M(a),M(b))/\mathfrak r^2(M(a),M(b))$.  Its numerator is
\[
 \mathfrak r(M(a),M(b))=
 \begin{cases}
  \Hom_{R[t]}(M(a),M(b)),&a\ne b,\\
  tR[t]/(t^a),&a=b.
 \end{cases}
\]
Moreover,
$\mathfrak r^2(M(a),M(b))=
{\sum_{h\in P}\mathfrak r(M(h),M(b))\,
\mathfrak r(M(a),M(h))}$.
If $a<h<b$ all belong to $P$, then
\(
 \iota_{a,b}=\iota_{h,b}\iota_{a,h},
  \
 \pi_{b,a}=\pi_{h,a}\pi_{b,h},
\)
so the standard maps lie in $\mathfrak r^2$.  Every higher $t$-multiple
factors through a radical endomorphism at an endpoint.  Therefore there are
no arrows between nonconsecutive vertices.

Now let $p<q$ be consecutive in $P$. If
$h\in P\setminus\{p,q\}$, then $h<p$ or $h>q$. The estimate in Step~1
shows that a factorization through $M(h)$ has strictly larger filtration
degree than $q-p$ for the inclusion and than $0$ for the projection,
whereas $\fdeg_t(\iota_{p,q})=q-p$ and
$\fdeg_t(\pi_{q,p})=0$.
An endpoint factorization contains a radical endomorphism and also
strictly increases the filtration degree.
{For $a,b\in P$ and
$\theta\in\mathfrak r(M(a),M(b))$, write $\overline\theta$ for its
class in
$\mathfrak r(M(a),M(b))/\mathfrak r^2(M(a),M(b))$.}
Hence
\[
 \frac{\mathfrak r(M(p),M(q))}
      {\mathfrak r^2(M(p),M(q))}
 =R\overline\iota_{p,q}, \quad
 \frac{\mathfrak r(M(q),M(p))}
      {\mathfrak r^2(M(q),M(p))}
 =R\overline\pi_{q,p}.
\]
Thus consecutive vertices have one arrow in each direction.

\smallskip
\noindent\emph{Step 3.}
At $M(r)$, $\rad\End_{R[t]}(M(r))=tR[t]/(t^r)$, and every power $t^j$ with $j\ge2$ already factors through a radical
endomorphism at $M(r)$.  Thus only multiplication by $t$ can define a loop.
For $h\in P\setminus\{r\}$, the standard backtrack through $M(h)$ sends
$1$ to $t^{|r-h|}$. Thus, for $r\ge2$,
$t\,\mathrm{id}_{M(r)}\in\mathfrak r^2(M(r),M(r))$ if and only if
$r-1\in P$ or $r+1\in P$. For $r=1$, multiplication by $t$ is zero. This proves that a loop occurs exactly when $r\ge2$,
$r-1\notin P$, and $r+1\notin P$, and that its arrow space is
one-dimensional.

\smallskip
\noindent\emph{Step 4.}
Because every arrow space is one-dimensional, each chosen arrow
representative has the form
$\widetilde\theta_i=c_i\theta_i+\eta_i$, where
$c_i\in R^\times$, $\eta_i\in\mathfrak r^2$, and $\theta_i$ is the
corresponding standard map. Write $\eta_i$ as a sum of radical
factorizations. Each term factors through an
intermediate summand or a radical endomorphism at an endpoint. By
Steps~2 and~3, its image lies in $t^{\fdeg_t(\theta_i)+1}$ times the
target module. The same holds for their sum, so
$\fdeg_t(\eta_i)>\fdeg_t(\theta_i)$ whenever $\eta_i\ne0$.
Consider a path occurring in \textup{(a)}--\textup{(c)} whose standard
composite is nonzero. Let $M(b)$ be its target and let $d<b$ be its
filtration degree.
By Step~1, every term containing some $\eta_i$ has image in
$t^{d+1}M(b)$. Modulo this submodule, the chosen composite equals
$\prod_i c_i$ times the standard composite, whose class in
$t^dM(b)/t^{d+1}M(b)$ is nonzero. Since $\prod_i c_i\ne0$, its filtration
degree remains $d$, as asserted in \textup{(a)}--\textup{(c)}.
For $q=1$, neither arrow representative admits a nonzero
$\mathfrak r^2$-correction: such a correction would have filtration degree
greater than $1$ in $M(2)$, respectively greater than $0$ in $M(1)$.
Hence the chosen backtrack is a scalar multiple of multiplication by $t$
on $M(1)$ and is therefore zero.
This proves \textup{(a)}--\textup{(c)} for arbitrary choices of arrow
representatives.
\end{proof}

{The preceding quiver calculation makes the application of
\cite{MarczinzikMonomial} explicit. Put $n:=\max P$,
$B:=R[t]/(t^n)$, and $X:=\bigoplus_{p\in P}B/(t^p)$.
Since $B$ is symmetric and is a summand of $X$, the module $X$ is a
generator-cogenerator, so $\Lambda(P)=\End_B(X)$ has dominant dimension
at least two. Marczinzik's theorem therefore implies that, if
$\Lambda(P)$ is monomial, it is Nakayama. By
Lemma~\ref{lem:quiver-rigidity}, three or more vertices give an interior
vertex with two outgoing arrows, while two nonconsecutive exponents give
a loop and another outgoing arrow at the larger vertex. Thus the
monomial cases are precisely $P=\{n\}$ and, for $n\ge2$,
$P=\{n-1,n\}$, with the converse given by the explicit presentations
above. The next proposition
provides a direct proof of the nonmonomial cases, valid for every choice
of a Gabriel presentation.}

\begin{proposition}\label{prop:monomial-obstruction}
Let $P=\{p_1<\cdots<p_s\}$ with $s\ge 2$.
\begin{enumerate}[label=\textup{(\arabic*)}]
\item If consecutive $p<q$ in $P$ satisfy $q-p>1$, then $\Lambda(P)$ has no monomial Gabriel presentation.
\item If $p,p+1,p+2\in P$, then $\Lambda(P)$ has no monomial Gabriel presentation.
\end{enumerate}
\end{proposition}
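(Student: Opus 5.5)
The plan is to apply Lemma~\ref{lem:obstruction} in both cases. Since $\Lambda(P)$ is split basic with residue field $R$ (Lemma~\ref{lem:linear-obstruction} with $f=t$), Lemma~\ref{lem:quiver-rigidity} describes its Gabriel quiver. Fix an arbitrary choice of idempotents and arrow representatives, giving $\phi$. We will exhibit two distinct paths $w_1,w_2$ with nonzero images whose images are proportional.

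The tool is that any two nonzero elements of $\Hom_{R[t]}(M(a),M(b))$ with the same radical-filtration degree $d$ agree modulo $t^{d+1}M(b)$ up to a unit. This alone does not yield proportionality. It will be enough, however, whenever the relevant graded piece forces proportionality or the whole space has dimension small enough.

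\textbf{Case (1)} Let $p<q$ be consecutive with $q-p\ge2$, and let $\iota$, $\pi$ be the arrows $M(p)\to M(q)$ and $M(q)\to M(p)$. I will compare two closed paths at $M(q)$ of length two.

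\begin{enumerate}
\item If $q+1\notin P$, then $M(q)$ carries a loop $\lambda$, because $q-1\notin P$ (as $p<q-1$ and $p,q$ are consecutive).
\begin{enumerate}
\item By Lemma~\ref{lem:quiver-rigidity}(b), $\lambda$ has degree $1$.
\item Take $w_1=\lambda^{q-p}$. Its degree is $q-p<q$ by the additivity in Step~1, so $\phi(w_1)\ne0$.
\item Take $w_2=\iota\pi$. It has degree $q-p$ by Lemma~\ref{lem:quiver-rigidity}(a).
\end{enumerate}
\item If $q+1\in P$, use the backtrack $\beta$ through $M(q+1)$ in place of $\lambda$. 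It has degree $1$ since $q\ge2$, so $w_1=\beta^{q-p}$ again has degree $q-p$.
\end{enumerate}

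In both subcases, the two nonzero elements of $\End(M(q))$ of degree $q-p$ need not be proportional, so I will use a dimension count instead. Consider the family $w_1,\ w_2,\ \text{and all length-}k\text{ closed paths of degree}\ge q-p+1$. Together these are more than $\dim t^{q-p}R[t]/(t^q)=p$ distinct nonzero paths inside that subspace.

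A cleaner route is to take the powers
\[
\lambda^{j}\ (q-p\le j\le q-1),\qquad \iota\pi\,\lambda^{j'}\ (0\le j'\le p-1).
\]
All of these are nonzero by the degree bounds, all are distinct paths, and all lie in the $p$-dimensional space $t^{q-p}R[t]/(t^q)$. There are $p+p>p$ of them, so they are linearly dependent, and Lemma~\ref{lem:obstruction} applies. The same count works with $\beta$ in place of $\lambda$.

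\textbf{Case (2)} Suppose $p,p+1,p+2\in P$. At $M(p+1)$ both backtracks, $\beta_-$ through $M(p)$ and $\beta_+$ through $M(p+2)$, have degree one by Lemma~\ref{lem:quiver-rigidity}(c). Consider the monomials in $\beta_\pm$ of total length $j$ for $1\le j\le p$. Each has degree $j$, which is less than $p+1$, so each is nonzero. These words all lie in the $p$-dimensional space $t\,R[t]/(t^{p+1})$. There are at least $2p>p$ of them, so they are dependent. (If $p=1$, one also needs $\beta_-\ne0$; but this is the $q=1$ backtrack, which is zero, so I would use $\beta_+$ and $\beta_+^{\,}$-words together with the paths through $M(p+2)$ instead.)

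The main obstacle is checking nonvanishing uniformly for arbitrary representatives. The degree estimates of Lemma~\ref{lem:quiver-rigidity}, Step~4, handle this. The edge case $p=1$ in (2) needs separate care; there I expect to compare $\beta_+$ with itself, or with the degree-one composite through the longer chain.
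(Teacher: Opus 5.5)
Your strategy matches the paper's. In both parts you take closed paths at $M(q)$ (resp.\ $M(p+1)$), get nonvanishing from the filtration degrees of Lemma~\ref{lem:quiver-rigidity}, and force a dependence inside a $p$-dimensional space. The paper uses the minimal family of $p+1$ paths, namely $\iota\pi,\omega^{q-p},\dots,\omega^{q-1}$ in (1) and $\rho_-,\rho_+,\dots,\rho_+^{p}$ in (2), and writes the dependence out explicitly. Your ``cleaner route'' with $2p$ elements and a pure dimension count is equally valid, and the earlier, vaguer family in (1) can simply be dropped. Two points need repair.

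\emph{The $p=1$ parenthetical in (2) rests on a misreading.} The $q=1$ exception in Lemma~\ref{lem:quiver-rigidity}(b) concerns the backtrack based at $M(1)$, namely $M(1)\to M(2)\to M(1)$. Your $\beta_-$ is based at $M(p+1)=M(2)$ and passes through $M(1)$, so it is covered by Lemma~\ref{lem:quiver-rigidity}(c) for every $p\ge1$. Concretely, the standard composite $\iota_{1,2}\pi_{2,1}$ is multiplication by $t$ on $M(2)$, which is nonzero. Your general count therefore already handles $p=1$: $\beta_-$ and $\beta_+$ are two nonzero elements of the one-dimensional space $tR[t]/(t^2)$. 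As written, however, you assert $\beta_-=0$ and defer to an unspecified alternative. That paragraph should be deleted, not completed; ``comparing $\beta_+$ with itself'' would not even give distinct paths.

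\emph{The reduction to standard idempotents is missing.} Lemma~\ref{lem:obstruction} quantifies over every choice of $\phi$, including the primitive idempotents. Lemma~\ref{lem:quiver-rigidity}, however, computes filtration degrees only for arrow representatives lying in the standard corners $\Hom_{R[t]}(M(a),M(b))$. Saying ``fix an arbitrary choice of idempotents'' and then invoking that lemma skips a step. The paper supplies it as follows. Let $\{e_i'\}$ be another complete set of primitive orthogonal idempotents, indexed so that $e_i'-e_i\in\rad\Lambda(P)$. Then $w:=\sum_i e_i'e_i$ is congruent to $1$ modulo the radical, hence a unit, and $e_i'w=we_i$, so $e_i'=we_iw^{-1}$. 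Composing $\phi$ with conjugation by $w^{-1}$ gives a choice with the standard idempotents and arrow representatives $w^{-1}\widetilde\alpha w\in e_j\rad\Lambda(P)e_i$. This is an algebra automorphism, so it preserves both nonvanishing and linear dependence of the images of paths.

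With these two fixes your argument is complete.
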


\begin{proof}
Let $\{e_i\}$ be the standard primitive idempotents and let $\{e_i'\}$ be
any other complete set of primitive orthogonal idempotents. After
reindexing, $e_i'-e_i\in\rad\Lambda(P)$ for every $i$. Put
$w:=\sum_i e_i'e_i$. Then $w\equiv 1\pmod{\rad\Lambda(P)}$, so $w$ is a
unit, and $e_i'w=we_i$. Thus $e_i'=we_iw^{-1}$. Since conjugation preserves
nonvanishing and linear dependence, it suffices to use the standard
idempotents. Fix arbitrary arrow representatives and apply
Lemma~\ref{lem:quiver-rigidity}.

(1) Write $d:=q-p>1$. Let $\rho$ be the image in $\End_{R[t]}(M(q))$ of the backtrack at $M(q)$ through $M(p)$. By Lemma~\ref{lem:quiver-rigidity}(a), $\rho=t^d u(t)$ with
$u(0)\ne 0$. Let $\omega$ be the loop at $M(q)$ if $q+1\notin P$, and otherwise the backtrack at $M(q)$ through $M(q+1)$. Since $p<q$ are consecutive and $q-p>1$, we have $q-1\notin P$ and $q\ge3$. Thus the loop exists in the first case. By Lemma~\ref{lem:quiver-rigidity}(b), $\omega=t v(t)$ with
$v(0)\ne 0$. For $1\le j<q$, write
$\omega^j=t^jv(t)^j=t^j(v(0)^j+t h_j(t))$ for a polynomial $h_j(t)$. Thus the change-of-basis matrix from
$\{t,t^2,\ldots,t^{q-1}\}$ to
$\{\omega,\omega^2,\ldots,\omega^{q-1}\}$ is triangular with nonzero
diagonal entries $v(0)^j$. Hence these powers form a basis of the maximal ideal, $\omega^q=0$, and
\[
\rho=c_d\omega^d+c_{d+1}\omega^{d+1}+\cdots+c_{q-1}\omega^{q-1}
\]
with $c_d\ne 0$ because $\fdeg_t(\rho)=d$. The elements
$\rho,\omega^d,\omega^{d+1},\dots,\omega^{q-1}$ are images of distinct
paths in the Gabriel quiver: $\rho$ uses the two arrows between $M(p)$ and
$M(q)$, while the powers of $\omega$ use only the loop at $M(q)$ or the
two arrows between $M(q)$ and $M(q+1)$. All these images have filtration degree less than $q$, so they are
nonzero and Lemma~\ref{lem:obstruction} applies.

(2) Let $\rho_-$ and $\rho_+$ be the images in $\End_{R[t]}(M(p+1))$ of the backtracks at $M(p+1)$ through $M(p)$ and $M(p+2)$, respectively. By Lemma~\ref{lem:quiver-rigidity}(c),
$\rho_-=t u_-(t)$ and $\rho_+=t u_+(t)$, with
$u_-(0),u_+(0)\ne 0$. Since $\rho_+$ generates the maximal ideal
$(t)/(t^{p+1})$ of $R[t]/(t^{p+1})$ and $\rho_+^{p+1}=0$,
\[
\rho_-=c_1\rho_+ + c_2\rho_+^2+\cdots+c_p\rho_+^p
\]
with $c_1\ne 0$. The two-step path representing \(\rho_-\) has middle
vertex \(M(p)\), whereas every positive power of the path representing
\(\rho_+\) alternates through \(M(p+2)\); paths with different powers also
have different lengths. Thus all the paths just described are distinct. Each
has filtration degree at most $p<p+1$ in
\(\End_{R[t]}(M(p+1))=R[t]/(t^{p+1})\), and hence has nonzero image.
Lemma~\ref{lem:obstruction} again applies.
\end{proof}

{Combining the split obstruction with the preceding monomial criterion
gives the following form of the known local string classification.
The gentle criterion is its quadratic-relation consequence.}

\begin{theorem}\label{thm:local-string}
Let $f\in R[x]$ be monic and irreducible, and let $P$ be a nonempty finite
set of positive integers. Then $\Lambda_f(P)$ is Morita-string if and only if
\[
 \deg f=1\quad\text{and}\quad
 P=\{p\}\ \text{or}\ P=\{p,p+1\}\quad(p\ge1).
\]
In these cases, $\Lambda_f(P)$ is isomorphic to $R_p$ or $N_p$,
respectively, and is already split basic. Hence it is Morita-gentle if and
only if it is gentle, which occurs precisely for
\[
R_1=R,\ \ R_2=R[\varepsilon]/(\varepsilon^2),\ \
N_1=\End_{R[\varepsilon]/(\varepsilon^2)}
   (R\oplus R[\varepsilon]/(\varepsilon^2)).
\]
\end{theorem}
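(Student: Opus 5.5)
We prove the two directions of the string criterion separately and then read off the gentle criterion from the explicit presentations.

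\emph{Necessity.} Suppose $\Lambda_f(P)$ is $R$-linearly Morita equivalent to a string $R$-algebra $B$. String algebras are split basic, so Lemma~\ref{lem:linear-obstruction} forces $\deg f=1$, say $f=x-\lambda$. The substitution $t=x-\lambda$ then identifies $\Lambda_f(P)$ with $\Lambda(P)$. Now $\Lambda(P)$ is basic, because its indecomposable summands $M(p_i)$ are pairwise nonisomorphic, and $B$ is basic. By Lemma~\ref{lem:morita-monomial-transfer}, $\Lambda(P)\simeq B$, so $\Lambda(P)$ inherits a monomial Gabriel presentation. If $s=|P|\ge2$, Proposition~\ref{prop:monomial-obstruction} excludes two situations: a gap $q-p>1$ between consecutive exponents, and three consecutive integers in $P$. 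Hence either $s=1$, or $s=2$ with $P=\{p,p+1\}$.

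\emph{Sufficiency.} For $P=\{p\}$ we have $\Lambda(P)=R_p\simeq R[t]/(t^p)$. This is the one-loop quiver with the single relation $t^p$, which is a string algebra; for $p=1$ it is simply $R$. For $P=\{p,p+1\}$, Lemma~\ref{lem:two-point} gives $\Lambda(P)\simeq N_p$, which is a string algebra. In both cases the algebra is already split basic, which establishes the isomorphism claims.

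\emph{Gentle criterion.} Suppose $\Lambda_f(P)$ is Morita-gentle. Gentle algebras are string algebras, so we are in one of the two cases above. By Lemma~\ref{lem:morita-monomial-transfer}, $\Lambda_f(P)$ is then isomorphic to a gentle algebra, so it suffices to decide which of $R_p$ and $N_p$ admit a gentle presentation. The relations ideal is intrinsic up to the choice of Gabriel presentation, and by the rigidity in Lemma~\ref{lem:quiver-rigidity} and Lemma~\ref{lem:obstruction} it is generated by the displayed monomial relation. Gentleness requires this relation to be quadratic (or the ideal to be zero in the loop-free case).
\begin{itemize}
\item For $R_p$, the relation is $t^p$. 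If $p=1$ there are no arrows and the algebra is $R$. If $p=2$ the relation $t^2$ is quadratic, and the single loop has exactly one forbidden continuation, so $R_2$ is gentle. If $p\ge3$, admissibility forces a relation of length $p>2$, so $R_p$ is not gentle.
\item For $N_p$, the relation is $(ba)^p$, of length $2p$. This is quadratic only for $p=1$. In that case $ab$ is allowed and $ba$ is forbidden, so each arrow has at most one forbidden and at most one permitted continuation, and $N_1$ is gentle.
\end{itemize}
Finally, $N_1=\End_{R[\varepsilon]/(\varepsilon^2)}(R\oplus R[\varepsilon]/(\varepsilon^2))$ by definition.

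The main obstacle is the claim that no other presentation of $R_p$ or $N_p$ can be quadratic. We handle it by invariance. The minimal relation length is intrinsic: for $R_p$ it equals the Loewy length $p$, since any loop generates the radical and $\rad^{p-1}\ne0$. For $N_p$ we argue via Lemma~\ref{lem:quiver-rigidity}: every choice of arrows gives $ba=t\,u(t)$ on $M(p)$, which is nonzero up to its $p$-th power. Hence every relation has length at least $2p$.
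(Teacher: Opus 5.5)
Your string criterion follows the paper's proof. Lemma~\ref{lem:linear-obstruction} forces $\deg f=1$, Lemma~\ref{lem:morita-monomial-transfer} transfers a monomial Gabriel presentation to $\Lambda(P)$, Proposition~\ref{prop:monomial-obstruction} cuts $P$ down to $\{p\}$ or $\{p,p+1\}$, and Lemma~\ref{lem:two-point} gives the converse. Your treatment of $R_p$ in the gentle part also matches: any loop representative generates the radical, so $\gamma^2$ survives when $p\ge3$.

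The genuinely different step is excluding $N_p$ for $p\ge2$.
\begin{itemize}
\item The paper argues purely combinatorially. Any gentle presentation has quiver $Q^{(2)}$, and finite dimensionality forces $ba$ or $ab$ into the ideal. Every path of length at least $3$ contains both quadratic paths, so it then dies, and the quotient has dimension at most $5<4p+1$. This needs only the Gabriel quiver and $\dim_R N_p$.
\item You instead show that no quadratic path vanishes under any Gabriel presentation, so a quadratically generated kernel would be zero, which is impossible. This is valid and proves more: combined with $(ba)^p=0$ and the dimension count $\dim RQ^{(2)}/((ba)^p)=4p+1=\dim_R N_p$, which you leave implicit, every presentation of $N_p$ has kernel $((ba)^p)$.
\end{itemize}

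Your route is more expensive and needs three repairs.
\begin{itemize}
\item You must first conjugate to the standard idempotents, as in the proof of Proposition~\ref{prop:monomial-obstruction}, before applying Lemma~\ref{lem:quiver-rigidity}.
\item You need both backtracks to be nonzero: $ab=t\,u'(t)\neq0$ on $M(p+1)$ by Lemma~\ref{lem:quiver-rigidity}(a), as well as $ba=t\,u(t)\neq0$ on $M(p)$ by part~(b). You mention only $ba$, and its powers are nonzero exactly up to the $(p-1)$-st, not the $p$-th.
\item Lemma~\ref{lem:obstruction} plays no role in identifying the kernel; it only rules out monomial presentations, so citing it there is misplaced.
\end{itemize}
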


\begin{proof}
Suppose that $\Lambda_f(P)$ is Morita-string.  By
Lemma~\ref{lem:linear-obstruction}, $\deg f=1$.  Thus $f=x-\lambda$ for
some $\lambda\in R$, and the change of variable $t:=x-\lambda$ gives
$\Lambda_f(P)\simeq
\End_{R[t]}(\bigoplus_{r\in P}M(r))=\Lambda(P)$.
Since
\(
 \Lambda(P)/\rad\Lambda(P)\simeq R^{|P|},
\)
the algebra $\Lambda(P)$ is split basic. Lemma~\ref{lem:morita-monomial-transfer}
therefore transfers the monomial Gabriel presentation of its string
representative to $\Lambda(P)$.

Write $P=\{p_1<\cdots<p_s\}$.  Then Proposition~\ref{prop:monomial-obstruction}
shows
\[
 p_{i+1}-p_i=1\quad(1\le i<s),
 \ \
 \{p,p+1,p+2\}\nsubseteq P\quad(p\ge1).
\]
If $s\ge3$, then the first condition yields
$\{p_1,p_1+1,p_1+2\}\subseteq P$, a contradiction. Hence
$P=\{p\}$ or $P=\{p,p+1\}$. Conversely,
$\Lambda(\{p\})=\End_{R[t]}(M(p))\simeq R_p$ and
$\Lambda(\{p,p+1\})=N_p$.
The algebras $R_p$ are monomial string algebras, and the algebras $N_p$
are string by Lemma~\ref{lem:two-point}.  This proves the string criterion.

By Lemma~\ref{lem:morita-monomial-transfer}, these split basic algebras
are Morita-gentle exactly when they are gentle. If $P=\{p\}$, then
$R_1=R$ and $R_p\simeq R[\gamma]/(\gamma^p)$ for $p\ge2$,
where $\gamma$ is the unique loop in the Gabriel quiver. Any representative
$u$ of this loop generates the maximal ideal of $R_p$. Hence
$1,u,\ldots,u^{p-1}$ is an
$R$-basis of $R_p$, while $u^p=0$. Therefore every Gabriel presentation has
kernel $(\gamma^p)$, which is generated by quadratic paths if and only if
$p=2$. Hence $R_p$ is gentle precisely for $p=1,2$.

Now let $P=\{p,p+1\}$. Lemma~\ref{lem:two-point} gives
$N_p\simeq RQ^{(2)}/((ba)^p)$ and $\dim_R N_p=4p+1$.
If $p=1$, this is a gentle presentation.  Conversely, suppose that $p\ge2$
and that $N_p$ is gentle.  Since $N_p$ is split basic, every gentle
presentation has the same Gabriel quiver $Q^{(2)}$.  Finite dimensionality
forces at least one of the quadratic paths $ba$ and $ab$ to be a relation;
otherwise their alternating powers give nonzero paths of arbitrary length.
Consequently, the only possible nonzero paths are the two trivial paths,
the arrows $a,b$, and at most one of $ba,ab$. Thus the quotient has
dimension at most $5$, contradicting $\dim_R N_p=4p+1\ge9$. Therefore $N_p$
is gentle if and only if $p=1$.

\end{proof}

\subsection{Global criteria and Morita reconstruction}
\label{subsec:global-string}

We now apply Theorem~\ref{thm:local-string} to the primary decomposition.
{For $c\in M_n(R)$ and $p\ge1$, define}
$u_p(c):=\#\{f\in\Irr(c)\mid \deg f=1,\ P_c(f)=\{p\}\}$ and
$v_p(c):=\#\{f\in\Irr(c)\mid
\deg f=1,\ P_c(f)=\{p,p+1\}\}$.
{The algebra
$B_c:=\prod_{f\in\Irr(c)}\Lambda_f(P_c(f))$ is a basic Morita
representative of $S_n(c,R)$ by
\eqref{eq:primary-product}--\eqref{eq:additive-generator-reduction}.}

\begin{corollary}
\label{cor:global-rigidity}
Let $c\in M_n(R)$. Then $S_n(c,R)$ is Morita-string if and only if every
$f\in\Irr(c)$ is linear and every $P_c(f)$ is a singleton or a consecutive
pair. In this case,
\[
 {B_c}\simeq
 \prod_{\substack{p\ge1\\u_p(c)>0}}R_p^{\,u_p(c)}\times
 \prod_{\substack{p\ge1\\v_p(c)>0}}N_p^{\,v_p(c)}.
\]
Both products are finite.
Moreover, $S_n(c,R)$ is Morita-gentle if and only if every $f$ is linear
and $P_c(f)\in\{\{1\},\{2\},\{1,2\}\}$.
Equivalently, its basic algebra has the form
\[
 {B_c}\simeq
 R^{u_1(c)}\times R_2^{u_2(c)}\times N_1^{v_1(c)}.
\]
\end{corollary}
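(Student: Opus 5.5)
The plan is to reduce everything to the basic Morita representative $B_c=\prod_{f\in\Irr(c)}\Lambda_f(P_c(f))$ and then apply Theorem~\ref{thm:local-string} factor by factor. Since $S_n(c,R)$ is Morita equivalent to $B_c$ and Morita equivalence is transitive, $S_n(c,R)$ is Morita-string (resp.\ Morita-gentle) if and only if $B_c$ is. Two ingredients are then needed. First, the string and gentle conditions are compatible with finite products. Second, a Morita-string basic algebra is isomorphic to a string algebra.

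For the \emph{if} direction of the string criterion, assume every $f$ is linear and every $P_c(f)$ is $\{p\}$ or $\{p,p+1\}$. Theorem~\ref{thm:local-string} then gives $\Lambda_f(P_c(f))\simeq R_p$ or $N_p$. These are split basic algebras with monomial string presentations (Lemma~\ref{lem:two-point} in the pair case). A finite product of string algebras is a string algebra: take the disjoint union of the quivers, the union of the monomial ideals, and the direct sum of admissibility data. So $B_c$ is string. Grouping factors by type yields the displayed product with exponents $u_p(c)$ and $v_p(c)$. Both products are finite because $\Irr(c)$ is finite.

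For the \emph{only if} direction, suppose $B_c$ is $R$-linearly Morita equivalent to a string algebra $A$. By Lemma~\ref{lem:morita-monomial-transfer}, applied with the basic string algebra $A$ (string algebras are basic, with semisimple quotient $R^{Q_0}$), we get $B_c\simeq A$. Now decompose along primitive central idempotents. The blocks of $A$ correspond to the connected components of its quiver, and each such component is again a string algebra. The blocks of $B_c$ are the $\Lambda_f(P_c(f))$, each of which is connected: its Gabriel quiver, computed in Lemma~\ref{lem:quiver-rigidity} after linearity, is connected, and in general its center $U_c(f)$ is local. Since block decompositions are unique up to permutation and isomorphism, each $\Lambda_f(P_c(f))$ is isomorphic to a product of connected components of $A$. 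Because $\Lambda_f(P_c(f))$ has local center, it is a single block and hence is a (connected) string algebra. Theorem~\ref{thm:local-string} then forces $f$ to be linear and $P_c(f)$ to be a singleton or a consecutive pair.

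The gentle case runs in parallel. A product of gentle algebras is gentle, since the relations stay quadratic and the local conditions are checked at each vertex. A block of a gentle algebra is gentle, being a union of connected components. Hence $S_n(c,R)$ is Morita-gentle if and only if each $\Lambda_f(P_c(f))$ is gentle, which by Theorem~\ref{thm:local-string} happens exactly for $R$, $R_2$ and $N_1$. The displayed form with exponents $u_1(c)$, $u_2(c)$, $v_1(c)$ follows.

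The main obstacle is the block-restriction step in the \emph{only if} directions. We must ensure that a block of a string (or gentle) algebra presented as $RQ/I$ really has a string (gentle) presentation. This holds because central idempotents of a basic algebra $RQ/I$ with admissible $I$ are sums of vertex idempotents over unions of connected components of $Q$. The argument is to note that $\rad(RQ/I)$ contains no nonzero idempotents, so a central idempotent reduces modulo the radical to a sum of vertex idempotents that commutes with all arrows.
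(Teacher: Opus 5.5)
Your proposal is correct and follows essentially the same route as the paper: pass to the basic representative $B_c$, and use Lemma~\ref{lem:morita-monomial-transfer} to replace ``Morita-string'' (resp.\ ``Morita-gentle'') by ``string'' (resp.\ ``gentle''). Then identify the blocks of $B_c$ with the factors $\Lambda_f(P_c(f))$ via locality of their centers, match these blocks with the connected components of the bound quiver, apply Theorem~\ref{thm:local-string} blockwise, and handle products by disjoint unions of bound quivers. Your idempotent argument, showing that central idempotents of $RQ/I$ with admissible $I$ are sums of vertex idempotents over unions of components, justifies the block--component correspondence, which the paper only asserts.
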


\begin{proof}
{By Theorem~\ref{thm:principal-hh-reconstruction},
$Z(\Lambda_f(P_c(f)))\simeq U_c(f)$ is local. Hence these factors are
exactly the blocks of $B_c$. In a bound-quiver presentation of a basic
algebra, the blocks correspond to the connected components of the quiver.}
By Lemma~\ref{lem:morita-monomial-transfer}, $S_n(c,R)$ is Morita-string
(respectively, Morita-gentle) exactly when $B_c$ is string (respectively,
gentle). {Each block inherits this property from its connected
component.} Conversely, products preserve it by taking disjoint unions
of bound quivers. Theorem~\ref{thm:local-string} therefore gives both
criteria and the displayed decompositions.
\end{proof}

{We apply these criteria to the reconstruction problem in
Theorem~\ref{thm:collapse}\textup{(3)}. In the string class, the local
center length and the number of simple modules determine each exponent
set. In the gentle class, the center and the support $\tau$-tilting poset
already determine the multiplicities of all basic factors.}

\begin{theorem}
\label{thm:collapse-body}
Let $c\in M_n(R)$ and $d\in M_m(R)$.
\begin{enumerate}[label=\textup{(\arabic*)}]
\item If $S_n(c,R)$ and $S_m(d,R)$ are Morita-string, then
\[
 \mathrm{M}\Longleftrightarrow\mathrm{AD}\Longleftrightarrow
 \mathrm{D}\Longleftrightarrow\mathrm{gTZ}\Longleftrightarrow
 \mathrm{bTZ}.
\]
Within this class, both $\mathrm{bTZ}\Rightarrow\mathrm{TZ}$ and
$\mathrm{TZ}\Rightarrow\mathrm{T}$ are strict.
\item If $S_n(c,R)$ and $S_m(d,R)$ are Morita-gentle, then
$c\simTZ d$ if and only if $S_n(c,R)$ and $S_m(d,R)$ are Morita
equivalent.
\end{enumerate}
\end{theorem}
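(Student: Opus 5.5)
For (1), I would first reduce everything to blockwise data using Corollary~\ref{cor:global-rigidity}. If $S_n(c,R)$ is Morita-string, every $f\in\Irr(c)$ is linear and every $P_c(f)$ is either $\{p\}$ or $\{p,p+1\}$; the same holds for $d$. In this situation the pair $(U_c(f),\kappa_c(f))$ already determines $P_c(f)$. Indeed, $U_c(f)\simeq R[t]/(t^{r_c(f)})$, and Corollary~\ref{cor:center-recovers} recovers $r_c(f)=\operatorname{LL}(U_c(f))=\max P_c(f)$. Given $\kappa_c(f)\in\{1,2\}$, the exponent set is then $\{r\}$ when $\kappa=1$ and $\{r-1,r\}$ when $\kappa=2$.

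With this observation, a bTZ bijection $\sigma$ satisfies $U_c(f)\simeq U_d(\sigma(f))$ and $P_c(f)=P_d(\sigma(f))$, so it is already an M-bijection. This gives $\mathrm{bTZ}\Rightarrow\mathrm{M}$. The reverse implications $\mathrm{M}\Rightarrow\mathrm{AD}\Rightarrow\mathrm{D}\Rightarrow\mathrm{gTZ}\Rightarrow\mathrm{bTZ}$ are Remark~\ref{rem:full-strict-chain} together with Theorem~\ref{thm:strict-comparison}, so all five relations coincide. Since Li--Xi identify M with Morita equivalence, no further argument is needed.

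For strictness inside the string class, I reuse the examples from the proof of Theorem~\ref{thm:strict-comparison}, checking that they stay Morita-string.
\begin{itemize}
\item \textbf{$\mathrm{bTZ}\Rightarrow\mathrm{TZ}$ is strict.} The example $c=J_1(0)\oplus J_5(0)\oplus J_2(1)$ has $P=\{1,5\}$, which is not consecutive, so I replace it. I would take $c=J_4(0)\oplus J_5(0)\oplus J_2(1)$ and $d=J_1(0)\oplus J_2(0)\oplus J_5(1)$. Both have center $R[t]/(t^5)\times R[t]/(t^2)$ and poset $\Weak(\Sigma_3)\times\Weak(\Sigma_2)$, but their blockwise data differ, and all exponent sets are singletons or consecutive pairs.
\item \textbf{$\mathrm{TZ}\Rightarrow\mathrm{T}$ is strict.} Take $\{1,2\}$ against $\{2,3\}$. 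They have the same T-type, but the Loewy lengths of the centers are $2$ and $3$, and both sets are consecutive pairs.
\end{itemize}

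For (2), Morita equivalence implies $\mathrm{M}$, which implies $\mathrm{TZ}$ by the chain. For the converse, Corollary~\ref{cor:global-rigidity} says the basic algebra is $R^{u_1}\times R_2^{u_2}\times N_1^{v_1}$, so it suffices to recover $(u_1,u_2,v_1)$ from TZ data. The center is $R^{u_1}\times (R[t]/(t^2))^{u_2+v_1}$, using $Z(N_1)\simeq R[z]/(z^2)$ from Lemma~\ref{lem:two-point}. Its decomposition into local factors determines $u_1$ and $u_2+v_1$. The poset is $\Weak(\Sigma_2)^{u_1+u_2}\times\Weak(\Sigma_3)^{v_1}$, so by Lemma~\ref{lem:atom-graph} it determines $v_1$ as the number of two-vertex components of the atom graph. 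Together these give $u_2$, hence isomorphic basic algebras and Morita equivalence.

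The main obstacle is the strictness claim in (1): the examples have to be chosen so that they remain inside the Morita-string class. I expect the consecutive-pair replacements above to suffice.
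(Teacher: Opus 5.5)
Your proposal is correct and follows the paper's proof: under bTZ, $\operatorname{LL}(U_c(f))=\max P_c(f)$ together with $\kappa_c(f)\in\{1,2\}$ forces $P_c(f)=P_d(\sigma(f))$, and in the gentle case the center and the T-type recover $u_1$, $u_2+v_1$ and $v_1$. Your separating examples differ from the paper's two-block examples $c_1,c_2$ and $d_1,d_2$ but are valid and stay in the Morita-string class; obtaining TZ from Morita equivalence via Li--Xi and the chain is also fine, though the paper's direct appeal to Morita invariance of centers and support $\tau$-tilting posets is quicker.
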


\begin{proof}
\textup{(1)} The forward implications follow from
Theorem~\ref{thm:strict-comparison} and \cite[Theorem~1.1]{LiXiD}. Suppose
that $c\simbTZ d$, and choose a bijection
$\sigma\colon\Irr(c)\to\Irr(d)$ as in the definition of
bTZ-equivalence. For each $f$, put
$r:=\operatorname{LL}(U_c(f))
=\operatorname{LL}(U_d(\sigma(f)))$ and
{$s:=\kappa_c(f)=\kappa_d(\sigma(f))$}.
By Corollary~\ref{cor:global-rigidity}, both $f$ and $\sigma(f)$ are linear,
$s\in\{1,2\}$, and each exponent set is a singleton or a consecutive pair.
Since $r$ is the largest exponent in each set,
\[
 P_c(f)=P_d(\sigma(f))=
 \begin{cases}
  \{r\},&s=1,\\
  \{r-1,r\},&s=2.
 \end{cases}
\]
Thus $c\simM d$, and the five relations coincide.

To separate bTZ from TZ, take
$c_1:=J_1(0)\oplus J_2(0)\oplus J_3(1)$ and
$c_2:=J_2(0)\oplus J_2(1)\oplus J_3(1)$.
The basic representatives of their centralizer matrix algebras are
$N_1\times R_3$ and $R_2\times N_2$.
Both have center $R_2\times R_3$ and support $\tau$-tilting poset
$\Weak(\Sigma_3)\times\Weak(\Sigma_2)$, whereas their blockwise data
are $\ms{(R_2,2),(R_3,1)}$ and $\ms{(R_2,1),(R_3,2)}$.
Hence $c_1\simTZ c_2$ but $c_1\not\simbTZ c_2$.

To separate TZ from T, choose distinct $\lambda_1,\lambda_2\in R$ and let
$d_1,d_2$ have elementary-divisor sets
$\{x-\lambda_1,(x-\lambda_2)^3\}$ and
$\{(x-\lambda_1)^2,(x-\lambda_2)^2\}$, respectively. Both have T-type $\ms{1,1}$, but their centers are
$R\times R_3$ and $R_2\times R_2$. Thus $d_1\simT d_2$ and
$d_1\not\simTZ d_2$.

\textup{(2)} {Let $S_n(c,R)$ and $S_m(d,R)$ be Morita-gentle},
and suppose $c\simTZ d$. By Corollary~\ref{cor:global-rigidity}, write
their basic algebras as
\[
 {B_e}\simeq
 R^{u_1(e)}\times R_2^{u_2(e)}\times N_1^{v_1(e)}
 \ \ (e\in\{c,d\}).
\]
The support $\tau$-tilting poset, equivalently $T_R(e)$, determines
$v_1(e)$ and $u_1(e)+u_2(e)$. Moreover,
$Z({B_e})\simeq
R^{u_1(e)}\times R_2^{u_2(e)+v_1(e)}$.
Uniqueness of the local center decomposition, together with the counts
determined by the T-type, gives {$u_1(c)=u_1(d)$,
$u_2(c)=u_2(d)$, and $v_1(c)=v_1(d)$}. Thus the basic algebras are
isomorphic, proving Morita equivalence. The converse follows from Morita
invariance of centers and support $\tau$-tilting posets.
\end{proof}

\begin{remark}
The additive-generator reduction
\eqref{eq:additive-generator-reduction} removes the elementary-divisor
multiplicities $m_c(f,a)$. Consequently, even within the Morita-string or
Morita-gentle classes, the preceding reconstruction statements do not, in
general, imply matrix similarity or algebra isomorphism.
\end{remark}

%\section*{Acknowledgments}

%Jiangsheng Hu was supported by the National Natural Science Foundation of
%China (Grant No.~12571035). Yu-Zhe Liu was supported by the National Natural
%Science Foundation of China (Grant Nos.~12401042 and 12561008), the Science
%and Technology Foundation of the Guizhou S\&T Department (Grant
%Nos.~VZD[2026]001, ZD[2025]085, and ZK[2024]YiBan066), and the Scientific
%Research Foundation of Guizhou University (Grant No.~[2023]16). Tiwei Zhao
%was supported by the National Natural Science Foundation of China (Grant
%No.~12471036) and the Hubei Provincial Natural Science Foundation of China
%(Grant No.~2026AFA094). The authors thank Professor Changchang Xi for
%valuable comments.

\end{document}